\def\namedlabel#1#2{\begingroup
   \def\@currentlabel{#2}%
   \label{#1}\endgroup
}
\newcommand{\R}{\mathbb R}
\newcommand{\N}{\mathbb N}
\newcommand{\Z}{\mathbb Z}
\newcommand{\RR}{\mathcal R}
\newcommand{\TT}{\mathcal T}
\newcommand{\OO}{\mathcal O}
\newcommand{\transpose}{\mathsf{T}}
\DeclareMathOperator{\Ls}{L}
\DeclareMathOperator{\LLs}{\mathbf L}
\DeclareMathOperator{\Hs}{H}
\DeclareMathOperator{\HHs}{\mathbf H}
\DeclareMathOperator{\Ws}{W}
\DeclareMathOperator{\Cs}{C}
\DeclareMathOperator{\ACs}{AC}
\DeclareMathOperator{\CCs}{\mathbf C}
\DeclareMathOperator{\Lips}{Lip}
\DeclareMathOperator{\Xs}{X}
\DeclareMathOperator{\Ys}{Y}
\DeclareMathOperator{\Zs}{Z}
\DeclareMathOperator*{\essup}{ess\,sup}
\newcommand{\brac}[1]{\left\lbrace{#1}\right\rbrace}
\newcommand{\paren}[1]{\left({#1}\right)}
\newcommand{\norm}[1]{\left\lVert{#1}\right\rVert}
\newcommand{\abs}[1]{\left\vert{#1}\right\vert}
\newcommand{\inprod}[1]{\left\langle{#1}\right\rangle}
\newcommand{\jump}[1]{\llbracket{#1}\rrbracket}
\newcommand{\ovl}[1]{\overline{#1}}
\newcommand{\unl}[1]{\underline{#1}}
\newcommand{\bm}[1]{\boldsymbol{#1}}
\newcommand{\dive}[1]{\nabla\cdot{#1}}
\newcommand{\curl}[1]{\nabla\times{#1}}
\newcommand{\veps}{\varepsilon}
\newcommand{\pa}{\partial}
\newcommand{\Gm}{\Gamma}
\newcommand{\om}{\omega}
\newcommand{\Om}{\Omega}
\newcommand{\sm}{\sigma}
\newcommand{\Sm}{\Sigma}
\newcommand{\fa}{\forall}
\newcommand{\sst}{\subset}
\newcommand{\Hb}{\bm{H}}
\newcommand{\Bb}{\bm{B}}
\newcommand{\Eb}{\bm{E}}
\newcommand{\Jb}{\bm{J}}
\newcommand{\Ab}{\bm{A}}
\newcommand{\Wb}{\bm{W}}
\newcommand{\xb}{\bm{x}}
\newcommand{\vb}{\mathbf{v}}
\newcommand{\fb}{\bm{f}}
\newcommand{\qb}{\bm{q}}
\newcommand{\pb}{\bm{p}}
\newcommand{\nv}{\textbf{n}}
\newcommand{\zrb}{\bm{0}}
\newcommand{\vphib}{\bm{\varphi}}
\newcommand{\Phib}{\bm{\Phi}}
\newcommand{\alb}{\bm{\alpha}}
\newcommand{\sra}{\rightharpoonup}
\newcommand{\emb}{\hookrightarrow}
\newcommand{\leqc}{\lesssim}
\newcommand{\geqc}{\gtrsim}
\newcommand{\dx}{\,\mathrm{d}\xb}
\newcommand{\ds}{\,\mathrm{d}s}
\newcommand{\dt}{\,\mathrm{d}t}
\newcommand{\di}{\,\mathrm{d}}
\newcommand{\q}{\quad}
\newcommand{\qq}{\qquad}
\newcommand{\qqq}{\qquad\quad}
\newcommand{\qqqq}{\qquad\qquad}
\newcommand{\qqqqqq}{\qquad\qquad\qquad}
\newcommand{\qqqqqqq}{\qquad\qquad\qquad\quad}
\newtheorem{theorem}{Theorem}[section]
\newtheorem{lemma}{Lemma}[section]
\newtheorem{definition}{Definition}[section]
\newtheorem{remark}{Remark}[section]
\begin{document}

	\title[]{A numerical scheme for solving an induction heating problem with moving non-magnetic conductor}
	
	\author[V.~C. Le]{Van Chien Le$^1$}
\thanks{The work of V.~C.~Le was supported by the European Research Council through the European Union's Horizon 2020 Research and Innovation programme (Grant number 101001847).}

\author[M. Slodi\v{c}ka]{Mari\'{a}n Slodi\v{c}ka$^2$}

	\author[K. Van Bockstal]{Karel Van Bockstal$^3$} 
	\thanks{The work of K.~Van Bockstal was supported by the Methusalem programme of Ghent University Special Research Fund (BOF) (Grant Number 01M01021).} 
	
\address[1]{IDLab, Department of Information Technology, Ghent University - imec, B 9000 Ghent, Belgium}

\email{vanchien.le@ugent.be}

\address[2]{Research group NaM$^2$, Department of Electronics and Information Systems, Ghent University, B 9000 Ghent, Belgium}
\email{marian.slodicka@ugent.be}
 
	\address[3]{Ghent Analysis \& PDE center, Department of Mathematics: Analysis, Logic and Discrete Mathematics\\ Ghent University\\
		Krijgslaan 281\\ B 9000 Ghent\\ Belgium} 
	\email{karel.vanbockstal@UGent.be}

	\subjclass[2020]{35Q61, 35Q79, 65M12}
	\keywords{induction heating, multi-component system, moving non-magnetic conductor, Reynolds transport theorem, restrained Joule heat source}
	
	\begin{abstract} 
This paper investigates an induction heating problem in a multi-component system containing a moving non-magnetic conductor. The electromagnetic process is described by the eddy current model, and the heat transfer process is governed by the convection-diffusion equation. The two processes are coupled by a restrained Joule heat source. A temporal discretization scheme is introduced to numerically solve the corresponding variational system. With the aid of the Reynolds transport theorem and a density argument, we prove the convergence of the proposed scheme as well as the well-posedness of the variational problem. Some numerical experiments are also performed to assess the performance of the numerical scheme.
	\end{abstract}
	
	\maketitle
	
	\tableofcontents

 %% main text
\section{Introduction}
\label{sec:intro}

Induction heating is the process of heating an electrical conductor through the heat generated by an eddy current. Induction heating is a standard industrial process with various applications, including surface hardening, induction mass heating, induction melting and induction welding. Other industrial applications of induction heating are listed in \cite{Rudnev2017}. Basically, an alternating current is passed through an electric coil. The electromagnetic fields occurring in the surrounding space induce an electric current in electrically conductive mediums, which is called the eddy current. Then, heat is generated due to the resistance of materials to the eddy current or the so-called Joule heating effect.

A considerable amount of literature has been published on the study of the induction heating process. The majority stand on physical and engineering points of view, where numerical simulation strategies have been performed, and experiments have been set up to validate numerical results. As an instance, the modelling of induction heating of carbon steel tubes was carried out in \cite{LFA2012}. The authors considered a mathematical model combining electromagnetic process, heat transfer by conduction, convection, and radiation, and ferromagnetic-paramagnetic transition. Some numerical simulations of the heating stage were made using the finite-element method (FEM) and were validated by experimental measurements. We also refer the reader to \cite{ZA2012,LWSG2020} for other studies of stationary induction heat treating. Besides that, FEM-based numerical schemes for moving induction heating problems were also studied in \cite{WHJ+1999,SG2012,BZW2015,WH2017}. However, those papers did not investigate fundamental questions, such as the convergence and stability of numerical simulations and the properties of the solution. 

In contrast to the aforementioned papers, several studies to date have investigated the well-posedness and the regularity of the solution to induction heating problems. However, all of them were restricted to a static geometry. The authors of \cite{Yin1994,Yin1997,Yin1998,Bien1999} studied the global solvability of Maxwell's equations together with temperature effects. More specifically, in \cite{Yin1994}, the quasi-static Maxwell's equations were expressed in terms of the magnetic field, and the existence of a solution was proved using a fixed point argument. The regularity of the solution was then studied in \cite{Yin1997}. In \cite{Yin1998}, the existence of a solution was shown for Maxwell's equations with the electric and magnetic fields as unknowns. The author of \cite{Bien1999} considered a degenerate problem modelling Joule heating in a conductive medium. The existence of global-in-time weak solutions was proved via the Faedo-Galerkin method. The papers \cite{SC2017,CS2017,CGS2017} also concerned mathematical models for a stationary induction heating problem. Herein, the electromagnetic process and heat transfer are both governed by nonlinear equations. In \cite{SC2017}, the equation was derived from Maxwell's equations in terms of the magnetic field, whilst in \cite{CS2017} it was expressed in terms of the magnetic induction. In both articles, the authors proved the existence of a weak solution to the coupled system with controlled Joule heating. The problem was then formulated in terms of the magnetic vector potential and electric scalar potential ($\Ab-\phi$ formulation) in \cite{CGS2017}. The existence of a global solution to the whole system was shown, and a numerical simulation was performed to support theoretical results. 

Recently, some theoretical and numerical studies on moving electromagnetic problems have been published. In \cite{BRR+2011,BMR+2013,BLR+2019}, the authors considered an eddy current problem in a cylindrical symmetric domain containing a moving non-magnetic conductor. The well-posedness of the variational system was studied, and a numerical scheme was introduced for the computation of solution. These results were extended to a general three-dimensional domain (without the symmetry assumption) in \cite{LSV2021b} and \cite{LSV2022c}. In these papers, a temporal discretization based on the backward Euler method and a FEM-based space-time discretization scheme were respectively proposed. The corresponding error estimates were also established, and some numerical experiments were introduced to validate the performance of the proposed schemes.  
In addition to those papers, the authors of \cite{LSV2021a} considered an electromagnetic contact problem with a moving conductor. The restriction on a non-magnetic moving conductor was no longer made. Instead, it allowed material coefficients to be fully jumping. In this case, the well-posedness of the system was proved using Rothe's method. These pioneering works on moving electromagnetic problems serve as a basis for the mathematical analysis and numerical computation of the induction heating process involving moving conductors.

To the best of our knowledge, there has been no paper dealing with mathematical analysis of an induction heating problem with a moving conductor, even though this process has successfully been applied in industry for decades. The present paper investigates an induction heating problem in a multi-component system containing a moving non-magnetic conductor. 
The electromagnetic process is described by the eddy current model, which is coupled with heat transfer via the Joule heating effect. Due to the conductor's and surrounding air's movement, the heat transfer process is a combination of thermal conduction and convection mechanisms. The nonlinearity of the Joule heat source is treated by introducing a cut-off function. Our investigation also relies on the assumption that the moving conductor is filled by a non-magnetic material. 

This paper is organised into eight sections. The next one introduces the geometrical setting and describes a mathematical model of the induction heating process. \Cref{sec:functional_setting} provides function spaces and some auxiliary results, which are necessary for analysis of the governed problem. \Cref{sec:uniqueness} derives the corresponding variational system and shows the uniqueness of its solution. In \Cref{sec:TD}, we design a temporal discretization scheme based on the backward Euler method and perform some a priori estimates for iterates. \Cref{sec:existence} is the central section devoted to showing the existence of a solution to the variational system and the convergence of the proposed numerical scheme. Finally, we present some numerical results for the discretization scheme in \Cref{sec:numerial_results}, and then we give a conclusion and some possibilities for future work in \Cref{sec:conclusion}.

%--------------------------------------------------------------------------
\section{Mathematical model} 
\label{sec:model}

\subsection{Geometrical setting}

We adopt the geometrical setting described in \cite{LSV2021a}, which was introduced for a moving electromagnetic problem. Let $\Om$ be an open, simply-connected, and bounded domain in $\R^3$ such that its boundary $\pa \Om$ belongs to the class $\Cs^{1,1}$ or $\Om$ is a convex polyhedron. The domain $\Om$ contains a moving workpiece $\Sm$ and a fixed coil $\Pi$ that are surrounded by air. The open connected subdomains $\Sm$ and $\Pi$ are supposed to belong to the class $\Cs^{2,1}$ and separate from each other, see \Cref{fig:domain}. Moreover, we introduce some notations that are frequently used throughout the manuscript: $\nv$ denotes the outward unit normal vector to the boundary $\pa\Om$; $\Theta(t) := \Sm(t) \cup \Pi$ is the subdomain consisting of electrically conductive bodies at time $t$; $\Xi(t) := \Om \setminus \ovl{\Theta(t)}$ is the space occupied by air at time $t$; and the interval $[0, T]$ stands for the considered time frame. The coil $\Pi$ shares common interfaces with the boundary $\pa \Om$, denoted by $\Gamma := \Gm_{\textrm{in}} \cup \Gm_{\textrm{out}}$, whose measures are strictly positive, i.e., $|\Gm_{\textrm{in}}|>0$ and $|\Gm_{\textrm{out}}| > 0$.

\begin{figure}
	\centering
	\begin{tikzpicture}
		\node [cylinder, shape border rotate=0, draw, minimum height=1.6cm, minimum width=0.8cm, fill=white] {};
		\draw[decoration={aspect=0.2,segment length=1.2mm,amplitude=0.65cm,coil},decorate] (2,0) -- (4,0); 
		\draw (2, 0) -- (2, -2.95);
		\draw (4, 0) -- (4, -1.67);
		\draw (1.5,0) circle (3cm);
		\draw (-1.5,0) arc (180:360:3 and 1.2);
		\draw[dashed] (4.5,0) arc (0:180:3 and 1.2);
		\draw[->, line width=0.2mm] (0.75,0) -- (1.3,0);
		\node at (0,0) {$\Sm$};
		\node at (1.15, 0.15) {$\vb$};
		\node at (4.5, 2) {$\Om$};
		\node at (2.5, 0.85) {$\Pi$};
		\node at (1.5, 2) {$\Xi$};
		\node at (2.1, -3.25) {$\Gm_{\textrm{in}}$};
		\node at (4.25, -1.95) {$\Gm_{\textrm{out}}$};
	\end{tikzpicture}
	\caption{The domain $\Omega$ consisting of a workpiece $\Sm$ moving with velocity $\vb$, a fixed coil $\Pi$ and the surrounding air $\Xi$. The coil $\Pi$ shares common interfaces $\Gm_{\textrm{in}}$ and $\Gm_{\textrm{out}}$ of strictly positive measures with the boundary (see \cite[Figure~1]{LSV2021b}).}
	\label{fig:domain}
\end{figure}
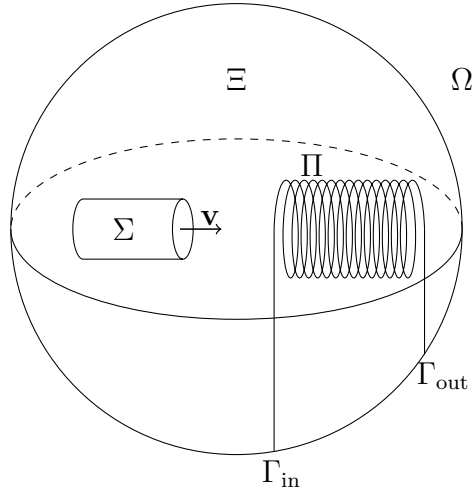
The movement of the workpiece can be parameterized by a smooth bijective mapping $\Phib: \Sm(0) \times [0, T] \to \R^3$ such that
\begin{equation}
	\label{eq:uniform_map}
	\Sm(t) = \Phib(\Sm(0), t);
	\q \bigcup_{t\in [0,T]} \ovl{\Sm(t)} \subset \Om;
	\q \det \nabla \Phib(\xb, t) > 0, \q \fa (\xb, t) \in \Sm(0) \times [0, T].
\end{equation}
We define the trajectory of the motion
\[
\TT := \brac{(\xb, t) : \xb \in \Sm(t), \ t \in [0, T]},
\]
and the velocity $\vb: \TT \to \mathbb{R}^3$ of the workpiece 
\[
\vb(\xb, t) = \dot{\Phib}\left(\left[\Phib(\cdot, t)\right]^{-1}(\xb), t\right),
\]
where $\dot{\Phib}$ represents the total derivative of $\Phib$ with respect to (w.r.t.) time variable. The reader is referred to \cite[Section~8]{Gurtin1981} for more details. Due to the movement of the workpiece $\Sm$, the surrounding air also moves in a fluid manner. We assume that the velocity vector can be extended to the whole domain $\Om$ and this extension (also denoted by $\vb$) is of class $\CCs^1(\ovl{\Om} \times [0, T])$ and satisfies $\vb = \zrb$ in the coil $\ovl{\Pi}$. 

The subdomains $\Sm, \Pi$ and $\Xi$ are filled by different materials, e.g., aluminium workpiece, copper coil and air. Induction heating process involves the following material coefficients: the magnetic permeability $\mu$, the electrical conductivity $\sm$, the thermal conductivity $\kappa$, and the volumetric heat capacity $\alpha$. In the case of non-magnetic conductors, the magnetic permeability on the entire domain can be well approximated by the constant of vacuum $\mu_0 > 0$, i.e.,
\[
\mu = \mu_0 \qq \text{ in } \quad \Omega.
\]
For the sake of simplicity, we assume that all material coefficients are strictly positive constants on each subdomain, except the electrical conductivity $\sm$ that is vanishing on the air, i.e.,
\[
\sigma(t) = 
\begin{cases}
	\sigma_{\Pi} > 0  \qq & \text{in} \q  \Pi, \\
	\sigma_{\Sigma} > 0 \qq & \text{in} \q \Sigma(t), \\
	0 & \text{in} \q \Xi(t). 
\end{cases}
\]

\begin{remark}
	Please note that the material coefficients $\sm, \kappa$ and $\alpha$ (except the magnetic permeability $\mu$) might be discontinuous at the interface of different subdomains. We use the subscripts $\Sm, \Pi$ and $\Xi$ to distinguish the material functions on the workpiece, the coil and air, respectively. 
\end{remark}

\subsection{Mathematical model}
Mathematical modelling of a low-frequency electromagnetic system with a moving conductor was thoroughly discussed in \cite{LSV2021b,LSV2021a}. Let us briefly recall the model considered in these papers. The electromagnetic process is modelled by the eddy current approximation of Maxwell's equations, or the so-called quasi-static system
\begin{subequations}
	\begin{align}
		& \dive{\Bb} = 0, \label{eq:Gauss} \\
		& \curl{\Eb} = -\pa_t\Bb, \label{eq:Faraday} \\
		& \curl{\Hb} = \Jb, \label{eq:Ampere}
	\end{align}
\end{subequations}
where $\Eb, \Hb, \Bb$ and $\Jb$ stand for the electric field, magnetic field, magnetic induction and current density, respectively. The behaviour of the electromagnetic fields passing through the interface of different materials is expressed by the following transmission conditions
\begin{equation}\label{eq:interface_condition}
	\llbracket\Bb \cdot \nv\rrbracket_{\pa \Theta \setminus \Gamma} = 0, \qq \llbracket\Hb \times \nv\rrbracket_{\pa \Theta \setminus \Gamma} = \zrb, \qq \text{and} \qq \llbracket (\Eb + \vb \times \Bb) \times \nv\rrbracket_{\pa \Theta \setminus \Gamma} = \zrb,
\end{equation}
where the unit normal vector $\nv$ to $\pa\Theta \setminus \Gm$ points from the electrical conductors (i.e., the workpiece $\Sm$ and the coil $\Pi$) to the air, and the jumps are defined by
\[
\llbracket \fb \times \nv\rrbracket = (\fb_2 - \fb_1) \times \nv, \qqqq \llbracket \fb \cdot \nv\rrbracket = (\fb_2 - \fb_1) \cdot \nv,
\] 
with $\fb_1$ and $\fb_2$ the limiting values of the field $\fb$ from the conductors and the air, respectively. 
We introduce a vector potential $\Ab$ of the magnetic induction $\Bb$ such that $\Bb = \curl{\Ab}$. When $\Bb \cdot \nv = 0$ on the boundary $\pa\Om$, the vector potential $\Ab$ exists uniquely 
such that $\Ab$ is divergence-free and satisfies $\Ab \times \nv  = \zrb$ on $\pa \Om$ (cf. \cite[Theorem 3.6 on p. 48]{Girault1986}).
Substituting $\Bb = \curl{\Ab}$ into the Faraday law \eqref{eq:Faraday} leads us to the decomposition of the electric field $\Eb = -\pa_t \Ab - \nabla \phi$, where $\phi$ exists uniquely up to a constant.
In addition, the general Ohm's law provides a constitutive relation for Maxwell's equations
\[
\Jb = \sm (\Eb + \vb \times \Bb).
\]
Hence, the total current density $\Jb$ can be divided into a source current part $\Jb_s = -\sm \nabla \phi$ and an eddy current part $\Jb_e = -\sm \pa_t \Ab + \sm \vb \times (\curl{\Ab})$. 
The source current $\Jb_s$ is originated from an external current $j$ applied on the interfaces $\Gm_{\textrm{in}}$ and $\Gm_{\textrm{out}}$. The scalar potential $\phi$ on the coil $\Pi$ is the solution to the following boundary value problem \cite{Homberg2004}
\begin{equation}
	\label{eq:phiBVP}
	\begin{cases}
		\dive{\paren{-\sm\nabla\phi}} = 0 \qq & \text{in} \q \Pi \times (0, T),\\
		-\sm\nabla\phi\cdot\nv = 0  & \text{on} \q \paren{\pa\Pi\setminus\Gm} \times (0, T),\\
		-\sm\nabla\phi\cdot\nv = j & \text{on} \q \Gm \times \paren{0, T},
	\end{cases}
\end{equation}
where $j$ satisfies the following compatibility condition 
\begin{equation}\label{eq:compatibility}
	\int\limits_{\Gm} j(\xb, t) \ds = 0 \qqqq \fa t \in [0, T].
\end{equation}
A comprehensive explanation of the modelling of the source current on the workpiece and on the air can be found on \cite[p.~4]{LSV2021a}. Finally, thanks to the Amp\`{e}re relation \eqref{eq:Ampere}, the initial-boundary value problem of the vector potential $\Ab$ reads as
\begin{equation}
	\label{eq:AIBVP}
	\begin{cases}
		\sm \pa_t\Ab + \mu_0^{-1} \curl{\curl{\Ab}} + \chi_{\Pi}\sm\nabla \phi - \sm \vb \times (\curl{\Ab}) = \zrb \qq & \text{in }  \Om \times (0, T), \\
		\dive{\Ab} = 0 & \text{in }  \Om \times (0, T), \\
		\Ab \times \nv = \zrb & \text{on }  \pa\Om \times (0, T), \\
		{\llbracket \Ab \rrbracket = \zrb} & {\text{on }  (\pa \Theta \setminus \Gamma) \times (0, T),} \\
		\llbracket (\curl{\Ab}) \times \nv \rrbracket = \zrb & \text{on } (\pa \Theta \setminus \Gamma) \times (0, T), \\
		\Ab(\cdot, 0) = \tilde{\Ab}_0 &  \text{in }  \Theta(0),
	\end{cases}
\end{equation}
where $\chi_{\Pi}$ is the characteristic function of the domain $\Pi$. By the Joule heating effect, the electric current flowing through the conductors produces a significant amount of heat given by
\[
Q = \dfrac{1}{\sm}\abs{\Jb}^2 = \sm \abs{\pa_t\Ab + \chi_{\Pi} \nabla\phi - \vb \times \left(\curl{\Ab}\right)}^2.
\]
This Joule heat source plays the role of an internal (contactless) source, which represents the coupling of the electromagnetic process and heat transfer. It is one of the most challenging points during the mathematical treatment of the model. In order to restrain this quadratic source term from increasing uncontrollably, we introduce a cut-off function $\RR_r$ that truncates the source heat $Q$ by a constant $r > 0$ as follows
\[
\RR_r(Q)(\xb, t) = \min(r, Q(\xb, t)).
\]
From the engineering point of view, this truncation models the use of a switch-off button, which prevents the conductors from undesirable thermal deformations. Due to the movement of the workpiece and surrounding air, the heat transfer process is governed by thermal conduction and thermal convection, which are described by the convection-diffusion equation. On the boundary $\pa \Om$, we impose a homogeneous Neumann condition, {which describes the entire domain $\Om$ as an isolated system}. Hence, the temperature $u$ is the solution to the following initial-boundary value problem (see, e.g., \cite{Bejan2013})
\begin{equation}
	\label{eq:uIBVP}
	\begin{cases}
		\alpha \pa_t u + \alpha \vb \cdot \nabla u - \dive{\left(\kappa \nabla u\right)} = \RR_r(Q) \qqq & \text{in}  \q \Om \times \paren{0, T}, \\
		\kappa \nabla u \cdot \nv = 0 & \text{on} \q \pa \Om \times \paren{0, T}, \\
		u(\cdot, 0) = \tilde{u}_0 & \text{in} \q \Om.
	\end{cases}
\end{equation}
The following transmission conditions describe the perfect thermal contact (without friction) between the conductors and the environment
\begin{equation}
	\label{eq:interface_u}
	\jump{u}_{\pa\Theta \setminus \Gm} = 0, \qqqqqqq \jump{\kappa \nabla u \cdot \nv}_{\pa\Theta \setminus \Gm} = 0.
\end{equation}

\begin{remark}
	The problem \eqref{eq:uIBVP}-\eqref{eq:interface_u} can be reformulated by substituting $\tilde{u} := \alpha u$. The resulting formulation has been studied in \cite{Lehrenfeld2015} and references therein. That formulation gets rid of the jumping coefficient $\alpha$ associated with the time derivative $\pa_t u$, but it in turn gives rise to the discontinuity of the solution $\tilde{u}$ across the interface $\pa\Theta(t) \setminus \Gm$. In this paper, we take advantage of classical Sobolev spaces by considering the system \eqref{eq:uIBVP}-\eqref{eq:interface_u}. 
\end{remark}

\section{Functional setting}
\label{sec:functional_setting}

\subsection{Standard function spaces}
First of all, we introduce some function spaces that are frequently used. 
The Sobolev space $\Ws^{k, \lambda}(\Om)$, with $k \in \N$ and $\lambda \in [1, \infty)$, is equipped with the following norm
\[
\norm{f}_{\Ws^{k, \lambda}(\Om)} = \paren{\sum\limits_{0 \le \abs{\alb} \le k} \int\limits_{\Om} \abs{D^{\alb} f(\xb)}^\lambda \dx}^{1/\lambda}.
\]
When $k = 0$, the space $\Ws^{0, \lambda}(\Om)$, with $\lambda \in [1, \infty)$, becomes the Lebesgue space $\Ls^{\lambda}(\Om)$. We denote by $\paren{\cdot, \cdot}_{\Om}$ the scalar product of the space $\Ls^2(\Om)$, with its induced norm $\norm{\cdot}_{\Ls^2(\Om)}$.
Among Sobolev spaces, only $\Ws^{k, 2}(\Om)$, with $k \in \N$, forms a Hilbert space, which is denoted by $\Hs^k(\Om)$. The notation $\Hs^1_0(\Om)$ stands for the closure of $\Cs^{\infty}_0(\Om)$ w.r.t. the norm of $\Hs^1(\Om)$, where $\Cs^{\infty}_0(\Om)$ is the space of compactly supported smooth functions defined on $\Om$. The trace space $\Hs^{1/2}(\pa\Om)$ consists of the trace of functions in $\Hs^1(\Om)$ to the boundary $\pa\Om$. The dual space of $\Hs^1(\Om)$ and $\Hs^{1/2}(\pa\Om)$ are denoted by $[\Hs^1(\Om)]^\prime$ and $\Hs^{-1/2}(\pa\Om)$, whose elements are identified via the $\Ls^2(\Om)$ and $\Ls^2(\Gm)$ pairings, respectively. Their duality pairings are respectively denoted by $\inprod{\cdot, \cdot}_{1, \Om}$ and $\inprod{\cdot, \cdot}_{1/2, \Gm}$. These notations are inherited for vector and tensor fields by using the corresponding bold symbols.
Moreover, the subspace $\Zs$ of $\Hs^1(\Pi)$ defined by
\[
\Zs := \brac{f \in \Hs^1(\Pi) : \paren{f, 1}_{\Pi} = 0}
\]
is a Hilbert space with the equivalent norm $\norm{\nabla f}_{\LLs^2(\Om)}$. In addition, the following Banach space of vector fields plays a central role in further analysis
\[
\Wb_0 := \brac{\fb \in \LLs^2(\Om) : \curl{\fb} \in \LLs^2(\Om), \dive{\fb} = 0, {\left.\fb\right|_{\pa\Om} \times \nv = \zrb}}
\]
equipped with the norm
\[
\norm{\fb}_{\Wb_0} = \norm{\curl{\fb}}_{\LLs^2(\Om)}.
\]
This norm is equivalent to the graph norm on the space $\Wb_0$, and $\Wb_0$ is continuously embedded into $\HHs^1(\Om)$ since the open, bounded, simply-connected domain $\Om$ is a convex polyhedron or its boundary $\pa\Om$ belongs to the class $\Cs^{1,1}$, see \cite[Lemma 3.3 on p. 51]{Girault1986} and \cite[Theorem 3.7 on p. 52]{Girault1986}. 

Next, let $\Xs$ be an arbitrary Banach space with norm $\norm{\cdot}_{\Xs}$ and $f: (0, T) \to \Xs$ be an abstract function. We denote by $\Cs([0, T], \Xs)$ and $\Lips([0, T], \Xs)$ the spaces of continuous and Lipschitz continuous functions $f$ endowed with the usual norm
\[
\norm{f}_{\Cs([0, T], \Xs)} = \max\limits_{0 \le t \le T} \norm{f(t)}_{\Xs}.
\]
The Bochner spaces $\Ls^{\lambda}((0, T), \Xs)$, with $\lambda \in [1, \infty)$, and $\Ls^{\infty}((0, T), \Xs)$ consist of all measurable abstract functions $f$ furnished with the norms
\[
\norm{f}_{\Ls^{\lambda}((0, T), \Xs)} = \paren{\int\limits_0^T \norm{f(t)}^{\lambda}_{\Xs} \dt}^{1/{\lambda}}, \qqqq \norm{f}_{\Ls^{\infty}((0, T), \Xs)} = \essup\limits_{t \in (0, T)} \norm{f(t)}_{\Xs}.
\]
Throughout this article, we denote by $\veps, C_{\veps}$ and $C$ positive constants depending only on the given data, where $\veps$ is a small number and $C_{\veps}$ is a large number depending on $\veps$. Their different values at different contexts are allowed. In order to reduce the number of constant notations, the notation $a \le Cb$ ($a \ge Cb$, resp.) is replaced by $a \leqc b$ ($a \geqc b$, resp.).

\subsection{Auxiliary results}

Since the Reynolds transport theorem (RTT) is crucial for further analysis of PDEs with moving domains, it is recalled here with some related inequalities.
For the sake of brevity, in this section, we omit the dependency of functions on $\xb$ and $t$ if it does not lead to any confusion.
We consider a Lipschitz moving domain $\om(t)$ whose movement is associated with a velocity vector $\vb$ of class $\CCs^1$. Let $f(\xb, t)$ be a scalar abstract function satisfying $f(t) \in \Ws^{1, 1}(\om(t))$ and $\pa_t f(t) \in \Ls^1(\om(t))$ for a.a. $t \in (0, T)$. Then, the RTT (cf. \cite[p.~78]{Gurtin1981}) and the Divergence theorem say
\begin{align}
	\dfrac{\mbox{d}}{\dt} \int\limits_{\om(t)} f \dx 
	& = \int\limits_{\om(t)} \pa_t f \dx + \int\limits_{\pa\om(t)} f(\vb \cdot \nv) \ds  \label{eq:Reynolds1} \\
	& = \int\limits_{\om(t)} \pa_t f \dx + \int\limits_{\om(t)} \dive{(f \vb)} \dx,\label{eq:Reynolds2}
\end{align}
where the unit normal $\nv$ to $\pa\om(t)$ points outward.
Given $f(t) \in \Hs^1(\om(t))$ for some $t$, the Divergence theorem and the $\veps$-Young inequality give that (see also \cite[Lemma~2.1]{Slodicka2021})
\begin{equation} 
	\int\limits_{\pa\om(t)} f^2 (\vb \cdot \nv) \ds 
	= 2 \int\limits_{\om(t)} f (\nabla f \cdot \vb) \dx + \int\limits_{\om(t)} f^2 (\dive{\vb}) \dx \le \veps \norm{\nabla f}^2_{\LLs^2(\om(t))} + C_{\veps} \norm{f}^2_{\Ls^2(\om(t))}. \label{eq:Necas_like}
\end{equation}
The constants $\veps$ and $C_{\veps}$ only depend on the norm of the velocity $\vb$, and the inequality \eqref{eq:Necas_like} is still valid for vector functions $\fb(t) \in \HHs^1(\om(t))$.

Next, let $\alpha \ge 0$ be a coefficient function that is constant on each subdomain $\Sm(t), \Xi(t)$ and $\Pi$, $w\in \Hs^1(\Om)$ and $u \in \Ls^1((0, T), \Hs^1(\Om))$. We denote 
\begin{equation}
	\label{def:f}
	{\mathcal F}_w(t) := (\alpha(t) u(t), w)_\Om = \int\limits_\Omega \alpha(t) u(t) \, w \dx.
\end{equation}
If ${\mathcal F}_w$ is absolutely continuous on $[0, T]$, i.e., ${\mathcal F}_w \in \ACs([0, T]),$ then its derivative ${\mathcal F}_w^\prime$ exists a.e. and belongs to the space $\Ls^1((0,T))$. In addition, if $\pa_t u(t) \in \Ls^2(\Om)$ for a.a. $t \in (0, T)$, then by the RTT we have that 
\begin{align*}
	{\mathcal F}_w^\prime (t) 
	& = \dfrac{\di}{\dt} \paren{\alpha(t) u(t), w}_\Om \\
	& = \alpha_\Sigma \dfrac{\di}{\dt} \int\limits_{\Sigma(t)} u(t) \, w \dx + \alpha_{\Xi} \dfrac{\di}{\dt} \int\limits_{\Xi(t)} u(t) \, w \dx  + \alpha_{\Pi} \dfrac{\di}{\dt} \int\limits_{\Pi} u(t) \, w \dx \\
	& = \alpha_\Sigma \left[ \, \int\limits_{\Sigma(t)} \pa_t u(t) \, w \dx + \int\limits_{\Sigma(t)} \dive{ \paren{ u(t) \, w \, \vb(t) } } \dx \right] \\
	& \q + \alpha_\Xi \left[ \, \int\limits_{\Xi(t)} \pa_t u(t) \, w \dx + \int\limits_{\Xi(t)} \dive{ \paren{ u(t) \, w \, \vb(t) } } \dx \right] \\
	& \q +  \alpha_{\Pi}  \int\limits_{\Pi}\pa_t u(t) \, w \dx\\
	& =  \int\limits_{\Omega} \alpha(t) \pa_t u(t) \, w \dx + \int\limits_{\Omega} \alpha(t) \dive{  \paren{u(t) \, w \, \vb(t) } } \dx.
\end{align*}
This observation motivates the following definition. 

\begin{definition}
	\label{def:derivative}
	Let $\alpha \ge 0$ be a coefficient that is constant on each subdomain. Given $u \in \Ls^1((0, T), \Hs^1(\Om))$ satisfying $(\alpha(\cdot) u(\cdot), w)_\Om \in \ACs([0, T])$ for all $w
	\in \Hs^1(\Om)$ and that there exists $g \in \Ls^1((0, T), [\Hs^1(\Om)]^\prime)$ such that  for all $ w \in \Hs^1(\Om)$
	\begin{equation}\label{eq:defder_g}
		\dfrac{\di}{\dt} (\alpha(t) u(t), w)_\Om = \inprod{g(t), w}_{1, \Om}, \qqq \text{for a.a. } t\in (0,T).
	\end{equation}
	Then, $\alpha \pa_t u \in \Ls^1((0, T), [\Hs^1(\Om)]^\prime)$ is defined by
	\begin{equation}
		\label{def:alpha_pdt_u}
		\inprod{(\alpha \pa_t u)(t), w}_{1, \Om} := \dfrac{\di}{\dt} (\alpha(t) u(t), w)_\Om - \int\limits_\Om \alpha(t) \dive{\paren{u(t) \, w \, \vb(t)}} \dx, \q \text{for a.a. } t \in (0, T).
	\end{equation}
\end{definition}
Now, we consider the Bochner space
\begin{equation}
	\Ys_\alpha := \brac{u \in \Ls^2((0, T), \Hs^1(\Om)) : \alpha \pa_t u \in \Ls^2((0, T), [\Hs^1(\Om)]^\prime)},
\end{equation}
equipped with its graph norm
\[
\norm{u}^2_{\Ys_\alpha} := \norm{u}^2_{\Ls^2((0, T), \Hs^1(\Om))} + \norm{\alpha \pa_t u}_{\Ls^2((0, T), [\Hs^1(\Om)]^\prime)}^2.
\]
The space $\Ys_\alpha$ is the natural solution space for \eqref{eq:uIBVP}-\eqref{eq:interface_u}. In addition, if $u \in \Ys_\alpha$, it follows from \eqref{def:alpha_pdt_u} that ${\mathcal F}_w\in \Hs^1((0,T))$ for all $w \in \Hs^1(\Om)$. The following lemma provides the density of the space $\Cs^1([0, T], \Hs^1(\Om))$ in $\Ys_\alpha$.

\begin{lemma}
	\label{lem:density}
	The space $\Cs^1([0, T], \Hs^1(\Om))$ is densely contained in $\Ys_{\alpha}$.
\end{lemma}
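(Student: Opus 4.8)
The plan is to transport the problem to a fixed reference configuration in which the moving coefficient $\alpha$ becomes time-independent, so that $\Ys_\alpha$ turns into a classical Bochner space for which density of $\Cs^1$-in-time functions is standard, and then to carry the approximants back. Two preliminary observations set the stage. First, $\Cs^1([0,T],\Hs^1(\Om))\sst\Ys_\alpha$: for such $u$ the Reynolds-transport computation preceding \Cref{def:derivative} gives $\mathcal F_w\in\Cs^1([0,T])$ for every $w$ and $(\alpha\pa_t u)(t)=\alpha(t)\pa_t u(t)$, which lies in $\Ls^\infty((0,T),\Ls^2(\Om))\sst\Ls^2((0,T),[\Hs^1(\Om)]^\prime)$. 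Second, the membership condition and the topology of $\Ys_\alpha$ do not depend on $\vb$: by \Cref{def:derivative}, $u\in\Ys_\alpha$ iff $u\in\Ls^2((0,T),\Hs^1(\Om))$, $\mathcal F_w\in\ACs([0,T])$ for all $w$, and the derivative $\mathcal F_w^\prime(t)=\inprod{G(t),w}_{1,\Om}$ has $G\in\Ls^2((0,T),[\Hs^1(\Om)]^\prime)$, since the term $w\mapsto\int_\Om\alpha(t)\dive{(u(t)\,w\,\vb(t))}\dx$ automatically defines an element of $[\Hs^1(\Om)]^\prime$ of norm $\leqc\norm{u(t)}_{\Hs^1(\Om)}$, so subtracting it from $G$ changes it only by a term controlled in $\Ls^2((0,T),[\Hs^1(\Om)]^\prime)$ by $\norm{u}_{\Ls^2((0,T),\Hs^1(\Om))}$, and the resulting norm is equivalent to $\norm{u}_{\Ys_\alpha}$. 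Equivalently, writing $v:=\alpha u$, the space $\Ys_\alpha$ consists (up to equivalent norm) of those $u$ for which $v$ has an ordinary weak time derivative $\pa_t v=G\in\Ls^2((0,T),[\Hs^1(\Om)]^\prime)$ while $v/\alpha=u\in\Ls^2((0,T),\Hs^1(\Om))$.

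I would then fix a sufficiently smooth isotopy $\bm{\Psi}: \ovl{\Om}\times[0,T]\to\ovl{\Om}$ with $\bm{\Psi}_t:=\bm{\Psi}(\cdot,t)$ a diffeomorphism, $\bm{\Psi}_0=\mathrm{id}$, $\bm{\Psi}_t=\mathrm{id}$ off a fixed compact $K\ssubset\Om$ with $K\cap\ovl{\Pi}=\empset$, and $\bm{\Psi}_t(\Sm(0))=\Sm(t)$, $\bm{\Psi}_t(\Pi)=\Pi$ (hence also $\bm{\Psi}_t(\Xi(0))=\Xi(t)$); such a $\bm{\Psi}$ exists because $\Phib$ is smooth and $\ovl{\bigcup_{t}\Sm(t)}\ssubset\Om\setminus\ovl{\Pi}$. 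Put $J_t:=\det\nabla\bm{\Psi}_t>0$, $\hat u(\yb,t):=u(\bm{\Psi}_t(\yb),t)$, and $\hat z(\yb,t):=J_t(\yb)\,\hat u(\yb,t)$. The extra factor $J_t$ is chosen so that, for fixed $\psi\in\Hs^1(\Om)$, the change of variables $\xb=\bm{\Psi}_t(\yb)$ gives
\[
(\alpha_0\,\hat z(t),\psi)_\Om=\big(\alpha(t)\,u(t),\;\psi\circ\bm{\Psi}_t^{-1}\big)_\Om,\qquad\text{where }\ \alpha_0:=\alpha\circ\bm{\Psi}_t\ \text{ is independent of }t
\]
(since $\bm{\Psi}_t$ preserves the labelled partition). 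As $J_t,J_t^{-1}\in\Ws^{1,\infty}(\Om)$ uniformly in $t$, the map $u\mapsto\hat z$ is a homeomorphism of $\Ls^2((0,T),\Hs^1(\Om))$ onto itself. Differentiating the displayed identity in $t$ and using the integration-by-parts rule
\[
\frac{\di}{\dt}\big(\alpha(t)u(t),w_t\big)_\Om=\inprod{G(t),w_t}_{1,\Om}+\big(\alpha(t)u(t),\pa_t w_t\big)_\Om,\qquad w_t:=\psi\circ\bm{\Psi}_t^{-1},
\]
valid for a.a.\ $t$ and proved by mollifying $\alpha u$ in time and passing to the limit (with $\pa_t w_t\in\Ls^2(\Om)$, $\norm{\pa_t w_t}_{\Ls^2(\Om)}\leqc\norm{\psi}_{\Hs^1(\Om)}$), one finds that $\alpha_0\hat z$ has an ordinary weak time derivative $\wh G\in\Ls^2((0,T),[\Hs^1(\Om)]^\prime)$ with $\norm{\wh G}\leqc\norm{u}_{\Ys_\alpha}$. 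The same argument applied to $\bm{\Psi}_t^{-1}$ bounds the inverse map, so $u\mapsto\hat z$ is a topological isomorphism of $\Ys_\alpha$ onto
\[
\wh{\Ys}_{\alpha_0}:=\brac{z\in\Ls^2((0,T),\Hs^1(\Om)):\pa_t(\alpha_0 z)\in\Ls^2((0,T),[\Hs^1(\Om)]^\prime)}
\]
with the ordinary weak time derivative and the fixed coefficient $\alpha_0$.

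It then suffices to prove that $\Cs^1([0,T],\Hs^2(\Om))$ is dense in $\wh{\Ys}_{\alpha_0}$ and that its image under the inverse change of variables lies in $\Cs^1([0,T],\Hs^1(\Om))$. The density is classical: since $\alpha_0$ is bounded above and below and independent of $t$, multiplication by $\alpha_0$ commutes with mollification in time, so one regularises first in time and then in space (the latter via a $t$-independent extension $\Hs^1(\Om)\to\Hs^1(\R^3)$ followed by spatial convolution), checking convergence in $\Ls^2((0,T),\Hs^1(\Om))$ and of the weak time derivative in $\Ls^2((0,T),[\Hs^1(\Om)]^\prime)$. If $z_k\in\Cs^1([0,T],\Hs^2(\Om))$ with $z_k\to\hat z$ in $\wh{\Ys}_{\alpha_0}$, then $u_k(\xb,t):=\det\nabla\bm{\Psi}_t^{-1}(\xb)\;z_k\big(\bm{\Psi}_t^{-1}(\xb),t\big)$ lies in $\Cs^1([0,T],\Hs^1(\Om))$ — the extra spatial derivative produced by $\pa_t$ of the composition now falls on $z_k\in\Hs^2(\Om)$, while $\bm{\Psi}$ and $J$ are smooth and equal the identity off $K$ — and $u_k\to u$ in $\Ys_\alpha$ by continuity of the inverse isomorphism. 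Since each $u_k$ belongs to $\Cs^1([0,T],\Hs^1(\Om))$ and $u\in\Ys_\alpha$ is arbitrary, the lemma follows.

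The hard part is the change-of-variables step. Because the time derivative in $\Ys_\alpha$ is the Reynolds-transport-corrected one rather than the naive weak derivative, mollifying $u$ directly in time does not interact well with $\alpha\pa_t u$; the crux is the integration-by-parts identity above for $\tfrac{\di}{\dt}(\alpha(t)u(t),w_t)_\Om$ with the \emph{moving} test function $w_t=\psi\circ\bm{\Psi}_t^{-1}$, which is exactly what makes the time-dependent transformation intertwine the $\Ys_\alpha$-derivative with the ordinary weak derivative on the fixed reference domain. A secondary technical point is that a time-dependent diffeomorphism does not preserve $\Cs^1([0,T],\Hs^1(\Om))$, which is why one first approximates in the reference space by functions that are also $\Hs^2$ in space before transporting back.
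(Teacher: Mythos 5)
Your argument is correct, but it takes a genuinely different route from the paper's. The paper stays in the moving configuration: it mollifies $u$ in time with the endpoint-shifted kernel of \cite[Lemma~7.2]{Roubicek2005}, uses the Reynolds transport theorem to control the commutator between the convolution and the time-dependent coefficient $\alpha$ (this is the lengthy bookkeeping for $I_1$, $I_2$, $I_3^a$, $I_3^b$), obtains only \emph{weak} convergence of $\alpha\pa_t u_\veps$ in $\Ls^2((0,T),[\Hs^1(\Om)]^\prime)$, and then invokes Mazur's theorem, so the paper's approximants are finite convex combinations of the mollified functions. You instead observe that, up to an equivalent norm, $\Ys_\alpha$ is the set of $u\in\Ls^2((0,T),\Hs^1(\Om))$ with an ordinary weak derivative $\pa_t(\alpha u)\in\Ls^2((0,T),[\Hs^1(\Om)]^\prime)$ (correct, since the divergence correction in \eqref{def:alpha_pdt_u} is controlled by $\norm{u(t)}_{\Hs^1(\Om)}$), pull back by an ambient isotopy extending the motion so that $\alpha$ becomes time-independent, and reduce to a classical density statement with fixed coefficient. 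The price is three ingredients the paper never needs: a sufficiently smooth global isotopy $\bm{\Psi}$ fixing $\Pi$ and a neighbourhood of $\pa\Om$ (this is the isotopy extension theorem; it is available here because $\Phib$ is smooth and the trajectory stays at positive distance from $\ovl{\Pi}$ and $\pa\Om$, but it is a nontrivial ingredient you assert rather than prove), the product rule for $\frac{\di}{\dt}\paren{\alpha(t)u(t),\psi\circ\bm{\Psi}_t^{-1}}_\Om$ with a \emph{moving} test function (you rightly identify this as the crux; your mollify-and-pass-to-the-limit sketch does work, and it is the analogue, in your setting, of the paper's RTT manipulations), and the $\Hs^2$-in-space regularisation of the approximants so that the pull-back lands in $\Cs^1([0,T],\Hs^1(\Om))$. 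What your route buys is conceptual economy and a stronger conclusion along the way (strong convergence of the approximating sequence itself, no Mazur step); what the paper's route buys is that it never needs a global diffeomorphic structure or spatial $\Hs^2$ approximation, only the RTT on the subdomains. Two points to tighten: the lower bound on $\alpha_0$ is not actually needed in your ``classical'' density step (time-mollification commutes with multiplication by the time-independent $\alpha_0$ regardless), and that step still requires the endpoint-shifted kernel, exactly as in the paper's proof.
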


\begin{proof}
	Let $u \in \Ys_\alpha$. For $0 < \veps \le T/2$, we denote
	\begin{equation}\label{eq:convint}
		u_\veps(t) := \int\limits_0^T \rho_\veps(t + \xi_\veps(t) - s) u(s) \ds, \qqqqqq \xi_\veps(t) := \veps \dfrac{T - 2t}{T}\in \left[-\veps, \veps \right],
	\end{equation}
	where $\rho_\veps : \R \to \R$ is the mollifier defined by
	\[
	\rho_\veps(t) :=
	\begin{cases}
		c\veps^{-1} \exp\paren{t^2/\paren{t^2-\veps^2}} \q &\text{for } \abs{t} < \veps, \\
		0  & \text{elsewhere},
	\end{cases}
	\]
	with $c$ the constant such that $\int_\R \rho_{1}(t) \dt = 1$ (hence, also $\int_\R \rho_{\veps}(t) \dt = 1$). 
	The function $\xi_\veps$ converges to $0$ as $\veps \searrow 0$ and slightly shifts the kernel in \eqref{eq:convint} such that only values of $u$ inside $[0, T]$ are taken into account, see \cite[Figure~16]{Roubicek2005}.
	It is clear that $u_\veps \in \Cs^1([0, T], \Hs^1(\Om))$. We follow \cite[Lemma~7.2]{Roubicek2005} to conclude that $u_\veps$ converges to $u$ in $\Ls^2((0, T), \Hs^1(\Om))$ when $\veps \searrow 0$.
	Next, we shall investigate the convergence of $\alpha \pa_t u_\veps$ in $\Ls^2((0, T), [\Hs^1(\Om)]^\prime)$.
	For all $y \in \Ls^2((0, T), \Hs^1(\Om))$, it holds that
	\begin{align*}
		\int\limits_0^T \inprod{(\alpha \pa_t u_\veps)(t), y(t)}_{1, \Om} \dt 
		& = \int\limits_0^T \int\limits_\Om \alpha(t) \, \pa_t u_\veps(t) \, y(t) \dx \dt \\
		& = \dfrac{T - 2\veps}{T} \int\limits_0^T \int\limits_\Om \int\limits_0^T \alpha(t) \, \rho^\prime_\veps(t + \xi_\veps(t) - s) \, u(s) \, y(t) \ds \dx \dt \\
		& = \dfrac{T - 2\veps}{T} \int\limits_0^T \int\limits_\Om \int\limits_0^T \left(\alpha(t) - \alpha(s)\right) \rho^\prime_\veps(t + \xi_\veps(t) - s) \, u(s) \, y(t) \ds \dx \dt \\
		& \qq + \dfrac{T - 2\veps}{T} \int\limits_0^T \int\limits_\Om \int\limits_0^T \alpha(s) \, \rho^\prime_\veps(t + \xi_\veps(t) - s) \, u(s) \, y(t) \ds \dx \dt \\
		& =: I_1 + I_2.
	\end{align*} 
	For the term $I_1$, we firstly interchange the order of integration between $s$ and $\xb$, then split  integrals over the subdomains $\Sm, \Xi$, and $\Pi$ as follows
	\begin{align*}
		I_1 &= \dfrac{T - 2\veps}{T} \int\limits_0^T \int\limits_0^T  \int\limits_\Om \left(\alpha(t) - \alpha(s)\right) \rho^\prime_\veps(t + \xi_\veps(t) - s) \, u(s) \, y(t)  \dx \ds \dt \\
		& = \dfrac{T - 2\veps}{T} \int\limits_0^T \int\limits_0^T  \int\limits_{\Sigma(t) \cup \Xi(t)\cup \Pi} \alpha(t) \, \rho^\prime_\veps(t + \xi_\veps(t) - s) \, u(s) \, y(t)  \dx \ds \dt \\
		& \qquad - \dfrac{T - 2\veps}{T} \int\limits_0^T \int\limits_0^T  \int\limits_{\Sigma(s) \cup \Xi(s)\cup \Pi}  \alpha(s) \, \rho^\prime_\veps(t + \xi_\veps(t) - s) \, u(s) \, y(t)  \dx \ds \dt \\
		& =  \alpha_\Sm \dfrac{T - 2\veps}{T} \int\limits_0^T \int\limits_0^T \rho_\veps^\prime(t + \xi_\veps(t) - s) \paren{\,\, \int\limits_{\Sm(t)} u(s) \, y(t) \dx - \int\limits_{\Sm(s)} u(s) \, y(t) \dx} \ds \dt \\
		& \qquad + \alpha_\Xi \dfrac{T - 2\veps}{T} \int\limits_0^T \int\limits_0^T \rho_\veps^\prime(t + \xi_\veps(t) - s) \paren{\,\, \int\limits_{\Xi(t)} u(s) \, y(t) \dx - \int\limits_{\Xi(s)} u(s) \, y(t) \dx} \ds \dt \\
		& =: I_{1, \Sm} + I_{1, \Xi}.  
	\end{align*}
	We demonstrate here the calculation for $I_{1, \Sm}$ and note that the term $I_{1, \Xi}$ can be handled similarly. The RTT allows us to deduce that 
	\begin{align*}
		I_{1, \Sm} 
		& \, = \alpha_\Sm \dfrac{T - 2\veps}{T} \int\limits_0^T \int\limits_0^T \rho_\veps^\prime(t + \xi_\veps(t) - s) \int\limits_s^t \dfrac{\di}{\di \eta} \int\limits_{\Sm(\eta)} u(s) \, y(t) \dx \di \eta \ds \dt \\
		& \stackrel{\eqref{eq:Reynolds2}}{=} \alpha_\Sm \dfrac{T - 2\veps}{T} \int\limits_0^T \int\limits_0^T \rho_\veps^\prime(t + \xi_\veps(t) - s) \int\limits_s^t \int\limits_{\Sm(\eta)} \dive{\left(u(s) \, y(t) \, \vb(\eta)\right)} \dx \di \eta \ds \dt \\
		& \, =  \dfrac{T - 2\veps}{T} \int\limits_0^T \int\limits_0^T \rho_\veps^\prime(t + \xi_\veps(t) - s) \int\limits_s^t \int\limits_{\Sm(\eta)} \alpha(\eta) \dive{\left(u(s) \, y(t) \, \vb(\eta)\right)} \dx \di \eta \ds \dt.
	\end{align*}
	Then, summing up $I_{1, \Sm}$ and $I_{1, \Xi}$ gives that
	\[
	I_1 = \dfrac{T - 2\veps}{T} \int\limits_0^T  \int\limits_{t + \xi_\veps(t) - \veps}^{t + \xi_\veps(t) + \veps} \rho_\veps^\prime(t + \xi_\veps(t) - s) \int\limits_s^t \int\limits_\Om \alpha(\eta) \dive{\left(u(s) \, y(t) \, \vb(\eta)\right)} \dx \di \eta \ds \dt =: \dfrac{T - 2\veps}{T} \tilde{I}_1,
	\]
	using $\vb= \zrb $ on $\Pi$. Please note that ${I}_1$ just differs from $\tilde{I}_1$ in the factor $(T - 2\veps)/T,$ which converges to $1$ as $\veps \searrow 0$. Next, using the partial integration w.r.t. $s,$
	%and the fact $\rho_\veps(t + \xi_\veps(t) - s) = 0$ at $s = 0$ and $s = T$ for all $t \in [0, T]$, 
	we observe that
	\begin{align*}
	&	- \int\limits_0^T \int\limits_\Om \alpha(t) \dive{\left(u(t) \, y(t) \, \vb(t)\right)} \dx \dt \\
	 & = - \int\limits_0^T  \int\limits_{t + \xi_\veps(t) - \veps}^{t + \xi_\veps(t) + \veps}\rho_\veps(t + \xi_\veps(t) - s)  \int\limits_\Om \alpha(t) \dive{\left(u(t) \, y(t) \, \vb(t)\right)} \dx \ds  \dt \\
		& = \int\limits_0^T  \int\limits_{t + \xi_\veps(t) - \veps}^{t + \xi_\veps(t) + \veps} \rho_\veps(t + \xi_\veps(t) - s)  \frac{\di}{\ds }\int\limits_s^t  \int\limits_\Om \alpha(t) \dive{\left(u(t) \, y(t) \, \vb(t)\right)} \dx \di\eta \ds \dt \\
		& = \int\limits_0^T  \int\limits_{t + \xi_\veps(t) - \veps}^{t + \xi_\veps(t) + \veps}\rho^\prime_\veps(t + \xi_\veps(t) - s)  \int\limits_s^t  \int\limits_\Om \alpha(t) \dive{\left(u(t) \, y(t) \, \vb(t)\right)} \dx \di\eta \ds \dt.
	\end{align*}
	Then, subtracting this term from $\tilde{I}_1$ leads us to
	\begin{align*}
		I_3 & := \tilde{I}_1 + \int\limits_0^T  \int\limits_\Om \alpha(t) \dive{\left(u(t) \, y(t) \, \vb(t)\right)} \dx \dt \\
		& \, = \int\limits_0^T  \int\limits_{t + \xi_\veps(t) - \veps}^{t + \xi_\veps(t) + \veps}\rho^\prime_\veps(t + \xi_\veps(t) - s) \int\limits_s^t \int\limits_\Om \left[ \alpha(\eta) \dive{\left(u(s) \, y(t) \, \vb(\eta)\right)} - \alpha(t) \dive{\left(u(t) \, y(t) \, \vb(t)\right)} \right] \dx \di \eta \ds \dt \\
		& \, = \int\limits_0^T \int\limits_{t + \xi_\veps(t) - \veps}^{t + \xi_\veps(t) + \veps} \rho^\prime_\veps(t + \xi_\veps(t) - s) \int\limits_s^t \int\limits_\Om \alpha(\eta) \dive{\left[\paren{u(s) - u(\eta)} y(t) \, \vb(\eta)\right]} \dx \di \eta \ds \dt \\
		& \qq + \int\limits_0^T \int\limits_{t + \xi_\veps(t) - \veps}^{t + \xi_\veps(t) + \veps} \rho^\prime_\veps(t + \xi_\veps(t) - s) \int\limits_s^t \int\limits_\Om \left[\alpha(\eta) \dive{\left(u(\eta) \, y(t) \, \vb(\eta)\right)} - \alpha(t) \dive{\left(u(t) \, y(t) \, \vb(t)\right)} \right] \dx \di \eta \ds \dt \\
		& \, =: I_3^a + I_3^b.
	\end{align*}
	Setting $h := s - t$ and $\lambda := \eta - t$, we can bound $I_3^a$ as follows 
	\begin{align*}
		\abs{I_3^a}^2 
		& \leqc \paren{\int\limits_0^T \norm{y(t)}_{\Hs^1(\Om)} \int\limits_{\xi_\veps(t) - \veps}^{\xi_\veps(t) + \veps} \abs{\rho^\prime_\veps\paren{h - \xi_\veps(t)}} \abs{\, \int\limits_{0}^h \norm{u(t + h) - u(t + \lambda)}_{\Hs^1(\Om)} \di \lambda \, } \di h \dt}^2 \\
		& \leqc \norm{y}^2_{\Ls^2((0, T), \Hs^1(\Om))} \int\limits_0^T \paren{\,\, \int\limits_{\xi_\veps(t) - \veps}^{\xi_\veps(t) + \veps} \abs{\rho^\prime_\veps\paren{h - \xi_\veps(t)}} \abs{\, \int\limits_{0}^h \norm{u(t + h) - u(t + \lambda)}_{\Hs^1(\Om)} \di \lambda \, } \di h}^2 \dt \\
		& \leqc \norm{y}^2_{\Ls^2((0, T), \Hs^1(\Om))} \paren{\int\limits_{- \veps}^{\veps} \rho^\prime_\veps(h)^2 \di h} \int\limits_0^T \,\, \int\limits_{\xi_\veps(t) - \veps}^{\xi_\veps(t) + \veps} \, \paren{\, \int\limits_{0}^h \norm{u(t + h) - u(t + \lambda)}_{\Hs^1(\Om)} \di \lambda}^2 \di h \dt \\
		& \leqc \dfrac{1}{\veps^{3}}  \norm{y}^2_{\Ls^2((0, T), \Hs^1(\Om))}  \int\limits_0^T \,\, \int\limits_{\xi_\veps(t) - \veps}^{\xi_\veps(t) + \veps} \,  \abs{ h }  \int\limits_{\xi_\veps(t) - \veps}^{\xi_\veps(t) + \veps} \norm{u(t + h) - u(t + \lambda)}_{\Hs^1(\Om)}^2 \di \lambda \di h \dt \\
		& \leqc \dfrac{1}{\veps^2} \norm{y}^2_{\Ls^2((0, T), \Hs^1(\Om))} \int\limits_{-2\veps}^{2\veps} \int\limits_{-2\veps}^{2\veps} \int\limits_0^T \norm{u(t + h) - u(t + \lambda)}^2_{\Hs^1(\Om)} \dt \di \lambda \di h \\
		& \leqc \norm{y}^2_{\Ls^2((0, T), \Hs^1(\Om))} \sup_{\abs{h} \le 2\veps} \, \sup_{\abs{\lambda} \le 2\veps} \, \int\limits_0^T \norm{u(t + h) - u(t + \lambda)}^2_{\Hs^1(\Om)} \dt,
	\end{align*}
	where we used the following estimate of the smooth function $\rho^\prime_\veps$:
	\begin{equation}
		\label{eq:estimate_rho_dt}
		\int\limits_{-\veps}^\veps \rho^\prime_\veps(h)^2 \di h = 
		4c^2\veps^2 \int\limits_{-\veps}^\veps \frac{h^2\exp\paren{2 h^2/\paren{h^2-\veps^2}}}{(h^2-\veps^2)^4} \di h
		\stackrel{h=\veps \sin(\theta)}{\leqc} \dfrac{1}{\veps^3}.
	\end{equation}
	Invoking the mean continuity of $u$ as an element in $\Ls^2((0, T), \Hs^1(\Om)$, we are able to pass to the limit for $\veps \searrow 0$ to get that $\lim\limits_{\veps \searrow 0} I_3^a = 0$. In addition to that, the term $I_3^b$ can be rewritten in the following form
	\begin{align*}
		I_3^b & = \int\limits_0^T \int\limits_{t + \xi_\veps(t) - \veps}^{t + \xi_\veps(t) + \veps} \rho^\prime_\veps(t + \xi_\veps(t) - s) \int\limits_s^t \int\limits_\Om \left[\alpha(\eta) \, \nabla u(\eta) \cdot \vb(\eta) -  \alpha(t) \, \nabla u(t) \cdot \vb(t)\right] y(t) \dx \di \eta \ds \dt \\
		& \q + \int\limits_0^T \int\limits_{t + \xi_\veps(t) - \veps}^{t + \xi_\veps(t) + \veps} \rho^\prime_\veps(t + \xi_\veps(t) - s) \int\limits_s^t \int\limits_\Om \left[\alpha(\eta) \, u(\eta) \, \dive{\vb(\eta)} - \alpha(t) \, u(t) \, \dive{\vb(t)}\right] y(t) \dx \di \eta \ds \dt \\
		& \q + \int\limits_0^T \int\limits_{t + \xi_\veps(t) - \veps}^{t + \xi_\veps(t) + \veps} \rho^\prime_\veps(t + \xi_\veps(t) - s) \int\limits_s^t \int\limits_\Om \left[\alpha(\eta) \, u(\eta) \, \vb(\eta) -  \alpha(t) \, u(t) \, \vb(t)\right] \cdot \nabla y(t) \dx \di \eta \ds \dt.
	\end{align*}
	Performing similar estimates as for $I_3^a$, we obtain that
	\begin{multline*}
		\abs{I_3^b}^2 
		\leqc \norm{y}^2_{\Ls^2((0, T), \Hs^1(\Om))}   \sup_{\abs{\lambda} \le 2\veps} \, \left(\int\limits_0^T \norm{(\alpha \nabla u \cdot \vb)(t+\lambda)-(\alpha \nabla u \cdot \vb)(t)}_{\Ls^2(\Om)}^2 \dt \right. \\ \left.  + \int\limits_0^T   \norm{(\alpha u \dive{\vb})(t+\lambda)-(\alpha u \dive{\vb})(t)}_{\Ls^2(\Om)}^2 \dt + \int\limits_0^T \norm{(\alpha u \vb)(t + \lambda) - (\alpha u \vb)(t)}^2_{\LLs^2(\Om)} \dt \right).
	\end{multline*}
	Here, we note that $\alpha \nabla u \cdot \vb$ and $\alpha u \dive{\vb}$ are elements in $\Ls^2((0, T), \Ls^2(\Om)$, while $\alpha u \vb$ is an element of $\Ls^2((0, T), \LLs^2(\Om)$. Thus, using the mean continuity of these elements, we conclude that $\lim\limits_{\veps \searrow 0} I_3^b = 0$, and hence $\lim\limits_{\veps \searrow 0} I_3 = 0$. In other words, the following limit transition holds true: 
	\begin{equation}
		\label{eq:limit_I1}
		\lim\limits_{\veps \searrow 0} I_1 = \lim\limits_{\veps \searrow 0} \tilde{I}_1 = - \int\limits_0^T \int\limits_\Om \alpha(t) \dive{\left(u(t) \, y(t) \, \vb(t)\right)} \dx  \dt.
	\end{equation}
	Next, the convergence of $I_2$ is examined. We first interchange the order of integration between $\xb$ and $s$, then use the partial integration w.r.t. $s$ and the definition \eqref{def:alpha_pdt_u} to see that
	\begin{align*}
		I_2 & = \dfrac{T - 2\veps}{T} \int\limits_0^T  \int\limits_{t + \xi_\veps(t) - \veps}^{t + \xi_\veps(t) + \veps} \, \rho^\prime_\veps(t + \xi_\veps(t) - s) \left( \int\limits_\Om \alpha(s) \, u(s) \, y(t)  \dx \right) \ds \dt \\
		& = \dfrac{T - 2\veps}{T} \int\limits_0^T  \int\limits_{t + \xi_\veps(t) - \veps}^{t + \xi_\veps(t) + \veps} \rho_\veps(t + \xi_\veps(t) - s) \dfrac{\di}{\ds} \paren{\alpha(s) u(s), y(t)}_\Om \ds \dt \\
		& = \dfrac{T - 2\veps}{T} \int\limits_0^T  \int\limits_{t + \xi_\veps(t) - \veps}^{t + \xi_\veps(t) + \veps} \rho_\veps(t + \xi_\veps(t) - s) \inprod{(\alpha \pa_s u)(s), y(t)}_{1, \Om} \ds \dt \\
		& \qq + \dfrac{T - 2\veps}{T} \int\limits_0^T  \int\limits_{t + \xi_\veps(t) - \veps}^{t + \xi_\veps(t) + \veps} \rho_\veps(t + \xi_\veps(t) - s) \int\limits_\Om \alpha(s) \dive{\paren{u(s) \, y(t) \, \vb(s)}} \dx \ds \dt.
	\end{align*}
	Passing into the limit for $\veps \searrow 0$ in $I_2$ with similar reasoning as in $I_1$, then invoking the mean continuity of $\alpha \pa_t u$ as an element in $\Ls^2((0, T), [\Hs^1(\Om)]^\prime)$, we end up with 
	\begin{equation}
		\label{eq:limit_I2}
		\lim_{\veps \searrow 0} I_2  
		= \int\limits_0^T  \inprod{(\alpha \pa_t u)(t), y(t)}_{1, \Om} \dt + \int\limits_0^T \int\limits_\Om \alpha(t) \dive{\left(u(t) \, y(t) \, \vb(t)\right)} \dx \dt. 
	\end{equation}
	Please note that the corresponding arguments for $I_2$ are even simpler than those for $I_1$, since there are only two integrals in time in $I_2$ rather than three in $I_1$. Moreover, instead of the estimate \eqref{eq:estimate_rho_dt} of $\rho^\prime_\veps$, the following estimate of $\rho_{\veps}$ was used
	\[
	\int\limits_{-\veps}^\veps \rho_\veps(h)^2 \di h = 
	\dfrac{c^2}{\veps^2} \int\limits_{-\veps}^\veps \exp\paren{2 h^2/\paren{h^2-\veps^2}}\di h
	\stackrel{h=\veps \sin(\theta)}{\leqc} \dfrac{1}{\veps}.
	\] 
	Now, combining the limit transitions \eqref{eq:limit_I1} and \eqref{eq:limit_I2} leads us to
	\[
	\lim\limits_{\veps \searrow 0} \int\limits_0^T \inprod{(\alpha \pa_t u_\veps)(t), y(t)}_{1, \Om} \dt = \int\limits_0^T \inprod{(\alpha\pa_t u)(t), y(t)}_{1, \Om} \dt,
	\]
	which holds for all $y \in \Ls^2((0, T), \Hs^1(\Om))$. As a consequence, $\alpha \pa_t u_\veps$ converges weakly to $\alpha \pa_t u$ in $\Ls^2((0, T), [\Hs^1(\Om)]^\prime)$ as $\veps \searrow 0$. By the Mazur theorem \cite[Theorem~2 on p.~120]{Yosida1980}, there exists a sequence $\tilde{u}_\veps$ of finite convex combinations of $u_\veps$ such that $\alpha \pa_t \tilde{u}_\veps \to \alpha \pa_t u$ in $\Ls^2((0, T), [\Hs^1(\Om)]^\prime)$. Finally, in conjunction with the strong convergence $u_\veps \to u$ in $\Ls^2((0, T), \Hs^1(\Om))$, we conclude that $\tilde{u}_\veps \to u$ in $\Ys_\alpha$, thereby completing the proof. 
\end{proof}

The next lemma is an extension of the following continuous embedding \cite[Lemma~7.3]{Roubicek2005}:
\[
\brac{u \in \Ls^2((0, T), \Hs^1(\Om)) : \pa_t u \in \Ls^2((0, T), [\Hs^1(\Om)]^\prime)} \emb \Cs([0, T], \Ls^2(\Om)).
\]

\begin{lemma}
	\label{lem:extension}
	If $u \in \Ys_\alpha$, then $\sqrt{\alpha} u \in \Cs([0, T], \Ls^2(\Om))$.
\end{lemma}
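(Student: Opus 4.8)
The plan is to establish a uniform-in-time a priori bound for smooth functions and then pass to the limit using the density result \Cref{lem:density}. So the first step is to take $u\in\Cs^1([0,T],\Hs^1(\Om))$ and compute the derivative of $t\mapsto\norm{\sqrt{\alpha}\,u(t)}^2_{\Ls^2(\Om)}$. Writing $\norm{\sqrt{\alpha}\,u(t)}^2_{\Ls^2(\Om)}=(\alpha(t)u(t),u(t))_\Om=\alpha_\Sm\int_{\Sm(t)}u(t)^2\dx+\alpha_\Xi\int_{\Xi(t)}u(t)^2\dx+\alpha_\Pi\int_{\Pi}u(t)^2\dx$ and applying the RTT \eqref{eq:Reynolds2} on each moving subdomain (recall $\vb=\zrb$ on $\ovl{\Pi}$), one obtains
\[
\dfrac{\di}{\dt}\norm{\sqrt{\alpha}\,u(t)}^2_{\Ls^2(\Om)}=\int_{\Om}2\alpha(t)\,u(t)\,\pa_t u(t)\dx+\int_{\Om}\alpha(t)\,\dive{\paren{u(t)^2\,\vb(t)}}\dx.
\]
On the other hand, the same RTT computation underlying \eqref{def:alpha_pdt_u} with the frozen test function $w=u(t)$ — the smooth instance of the motivating calculation preceding \Cref{def:derivative} — identifies $\inprod{(\alpha\pa_t u)(t),u(t)}_{1,\Om}=\int_{\Om}\alpha(t)\,u(t)\,\pa_t u(t)\dx$. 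Combining the two and integrating in time yields, for all $s,t\in[0,T]$,
\[
\norm{\sqrt{\alpha}\,u(t)}^2_{\Ls^2(\Om)}-\norm{\sqrt{\alpha}\,u(s)}^2_{\Ls^2(\Om)}=\int_s^t\paren{2\inprod{(\alpha\pa_\tau u)(\tau),u(\tau)}_{1,\Om}+\int_{\Om}\alpha(\tau)\paren{2u\,\nabla u\cdot\vb+u^2\,\dive{\vb}}\dx}\di\tau.
\]

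The second step is to estimate the right-hand side. Since $\alpha$ is bounded and $\vb\in\CCs^1(\ovl{\Om}\times[0,T])$, the inner volume integral is $\leqc\norm{u(\tau)}^2_{\Hs^1(\Om)}$ (alternatively, bound the associated interface fluxes via \eqref{eq:Necas_like}), while the duality pairing obeys $\inprod{(\alpha\pa_\tau u)(\tau),u(\tau)}_{1,\Om}\le\norm{(\alpha\pa_\tau u)(\tau)}_{[\Hs^1(\Om)]^\prime}\norm{u(\tau)}_{\Hs^1(\Om)}$. Integrating the resulting inequality once more with respect to $s$ over $(0,T)$ and applying the Cauchy--Schwarz inequality in time then gives the a priori bound
\[
\sup_{t\in[0,T]}\norm{\sqrt{\alpha}\,u(t)}^2_{\Ls^2(\Om)}\leqc\norm{u}^2_{\Ys_\alpha}\qquad\text{for all }u\in\Cs^1([0,T],\Hs^1(\Om)).
\]

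The final step is the completeness argument. By \Cref{lem:density} choose $u_n\in\Cs^1([0,T],\Hs^1(\Om))$ with $u_n\to u$ in $\Ys_\alpha$; each difference $u_n-u_m$ again lies in $\Cs^1([0,T],\Hs^1(\Om))\sst\Ys_\alpha$ (for smooth functions $\alpha\pa_t$ coincides with the classical $L^2$ object $\alpha(\cdot)\pa_t(\cdot)$, as in the motivating computation), so the a priori bound gives $\sup_{t\in[0,T]}\norm{\sqrt{\alpha}\,(u_n(t)-u_m(t))}^2_{\Ls^2(\Om)}\leqc\norm{u_n-u_m}^2_{\Ys_\alpha}\to0$. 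Hence $(\sqrt{\alpha}\,u_n)$ is Cauchy, thus convergent, in the Banach space $\Cs([0,T],\Ls^2(\Om))$, say to some $v$. Since $\sqrt{\alpha}$ is bounded, $u_n\to u$ in $\Ls^2((0,T),\Hs^1(\Om))$ also gives $\sqrt{\alpha}\,u_n\to\sqrt{\alpha}\,u$ in $\Ls^2((0,T),\Ls^2(\Om))$, so $v=\sqrt{\alpha}\,u$ almost everywhere on $(0,T)$; that is, $\sqrt{\alpha}\,u$ admits a representative in $\Cs([0,T],\Ls^2(\Om))$, which is the assertion.

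I expect the only delicate point to be the smooth-level identity for $\dfrac{\di}{\dt}\norm{\sqrt{\alpha}\,u}^2_{\Ls^2(\Om)}$: one must keep track of the motion of the interfaces $\pa\Sm(t)$ and $\pa\Xi(t)$ and verify that the boundary fluxes produced by the RTT are exactly cancelled by the divergence correction in \eqref{def:alpha_pdt_u}, leaving only the harmless velocity-dependent volume terms; the remaining estimates and the density passage are then routine.
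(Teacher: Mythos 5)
Your proposal is correct, and it rests on the same two ingredients as the paper's proof: the RTT-based energy identity for smooth functions and the density result of \Cref{lem:density}. The difference is in how the conclusion is extracted. The paper extends the integrated identity \eqref{eq:continuity} from $\Cs^1([0,T],\Hs^1(\Om))$ to all of $\Ys_\alpha$ by density and then bounds $\abs{\norm{\sqrt{\alpha(\eta)}u(\eta)}^2_{\Ls^2(\Om)}-\norm{\sqrt{\alpha(\xi)}u(\xi)}^2_{\Ls^2(\Om)}}$ by integrals that vanish as $\eta\to\xi$, i.e.\ it proves (absolute) continuity of the scalar map $t\mapsto\norm{\sqrt{\alpha(t)}u(t)}^2_{\Ls^2(\Om)}$ and concludes from there. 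You instead keep the identity at the smooth level, upgrade it (by the extra integration in $s$ and Cauchy--Schwarz) to the uniform bound $\sup_t\norm{\sqrt{\alpha}u(t)}^2_{\Ls^2(\Om)}\leqc\norm{u}^2_{\Ys_\alpha}$, and finish by a Cauchy/completeness argument in $\Cs([0,T],\Ls^2(\Om))$. What your route buys is that it delivers genuine vector-valued continuity directly from uniform convergence, rather than only continuity of the norm (the paper's last step is terse on exactly this point, as passing from norm continuity to continuity of $t\mapsto\sqrt{\alpha(t)}u(t)$ normally also uses the weak continuity encoded in $\mathcal{F}_w\in\Hs^1((0,T))$). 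What it costs is one step you left implicit: you need $\sqrt{\alpha}\,u_n\in\Cs([0,T],\Ls^2(\Om))$ for each smooth approximant before you may speak of a Cauchy sequence in that space, and this is not purely formal because $\sqrt{\alpha(t)}$ itself moves with the interfaces. It does follow in one or two lines: write $\sqrt{\alpha(t)}u_n(t)-\sqrt{\alpha(s)}u_n(s)=\sqrt{\alpha(t)}\paren{u_n(t)-u_n(s)}+\paren{\sqrt{\alpha(t)}-\sqrt{\alpha(s)}}u_n(s)$, note that the second factor is supported on the symmetric differences $\Sm(t)\triangle\Sm(s)$ and $\Xi(t)\triangle\Xi(s)$, whose measures tend to zero as $s\to t$ by the regularity of the motion \eqref{eq:uniform_map}, and use the absolute continuity of the integral of $\abs{u_n(t)}^2$. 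With that line added, your argument is complete and is a legitimate, slightly reorganized alternative to the paper's proof.
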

\begin{proof}
	Let $u \in \Cs^1([0, T], \Hs^1(\Om))$. For any time interval $(\xi, \eta) \sst (0, T)$, the following identity holds  
	\begin{align*}
		\int\limits_\xi^{\eta} \paren{\alpha(t) \pa_t u(t), u(t)}_{\Sm(t)} \dt  
		& = \dfrac{\alpha_{\Sm}}{2} \int\limits_\xi^{\eta} \int\limits_{\Sm(t)} \pa_t u^2(t) \dx \dt \\
		& \stackrel{\eqref{eq:Reynolds2}}{=} \dfrac{\alpha_{\Sm}}{2} \int\limits_\xi^{\eta} \dfrac{\di}{\dt} \int\limits_{\Sm(t)} u^2(t) \dx \dt - \dfrac{\alpha_{\Sm}}{2} \int\limits_\xi^{\eta} \int\limits_{\Sm(t)} \dive{\paren{\vb(t) \, u^2(t)}} \dx \dt \\
		& =
		\dfrac{\alpha_{\Sm}}{2} \norm{u}^2_{\Ls^2(\Sm)}(\eta) - \dfrac{\alpha_{\Sm}}{2} \norm{u}^2_{\Ls^2(\Sm)}(\xi)
		- \dfrac{\alpha_{\Sm}}{2} \int\limits_\xi^{\eta} \int\limits_{\Sm(t)} \dive{\paren{\vb(t) \, u^2(t)}} \dx \dt.        
	\end{align*}
	Similar identities can be obtained for the integrals over $\Xi(t)$ and $\Pi$. Thus, we arrive at
	\begin{multline*}
	\int\limits_\xi^{\eta} \paren{\alpha(t) \pa_t u(t), u(t)}_{\Omega} \dt
	= \dfrac{1}{2} \norm{\sqrt{\alpha(\eta)}u(\eta)}^2_{\Ls^2(\Omega)}  \\
	- \dfrac{1}{2} \norm{\sqrt{\alpha(\xi)} u(\xi)}^2_{\Ls^2(\Om)} - \dfrac{1}{2} \int\limits_\xi^{\eta} \int\limits_{\Om} \alpha(t) \dive{\paren{\vb(t) \, u^2(t)}} \dx \dt.        
	\end{multline*}
	By a density argument (cf. Lemma~\ref{lem:density}), we can show for $u \in \Ys_\alpha$ that
	\begin{multline}
		\label{eq:continuity}
		\int\limits_\xi^{\eta} \inprod{(\alpha \pa_t u)(t), u(t)}_{1, \Omega} \dt
		= \dfrac{1}{2} \norm{\sqrt{\alpha(\eta)}u(\eta)}^2_{\Ls^2(\Omega)}\\
		 - \dfrac{1}{2} \norm{\sqrt{\alpha(\xi)} u(\xi)}^2_{\Ls^2(\Om)} - \dfrac{1}{2} \int\limits_\xi^{\eta} \int\limits_{\Om} \alpha(t) \dive{\paren{\vb(t) \, u^2(t)}} \dx \dt.        
	\end{multline}
	Finally, we can easily deduce from \eqref{eq:continuity} that
	\[
	\abs{ \ \norm{\sqrt{\alpha(\eta)} u(\eta)}^2_{\Ls^2(\Om)} - \norm{\sqrt{\alpha(\xi)} u(\xi)}^2_{\Ls^2(\Om)}} \leqc \int\limits_{\xi}^{\eta} \norm{(\alpha \pa_t u)(t)}^2_{[\Hs^1(\Om)]^\prime} \dt + \int\limits_{\xi}^{\eta} \norm{u(t)}^2_{\Hs^1(\Om)} \dt.
	\]
	Hence, $\sqrt{\alpha} u \in \Cs([0, T], \Ls^2(\Om))$.
\end{proof}

\section{Uniqueness}
\label{sec:uniqueness}

Now, we are in the position to introduce the variational formulation of the problems \eqref{eq:phiBVP}-\eqref{eq:interface_u}. 
Multiplying the first equations of \eqref{eq:phiBVP}, \eqref{eq:AIBVP} and \eqref{eq:uIBVP} by $\psi \in \Zs, \vphib \in \Wb_0$ and $w \in \Hs^1(\Om)$, respectively, then applying the Green theorem, we arrive at the following variational problem: 
Find $\phi(t) \in \Zs, \Ab(t) \in \Wb_0$ and $u(t) \in \Hs^1(\Om)$ with $(\sm \pa_t \Ab)(t) \in [\HHs^1(\Om)]^\prime$ and $(\alpha \pa_t u)(t) \in [\Hs^1(\Om)]^\prime$ such that 
\begin{equation}
	 \sm_{\Pi} \paren{\nabla\phi(t), \nabla\psi}_{\Pi} + \inprod{j(t), \psi}_{1/2, \Gm} = 0, \label{eq:vf_phi} 
\end{equation}
\begin{multline}
	 \inprod{(\sm \pa_t \Ab)(t), \vphib}_{1, \Om} + \mu_0^{-1} \paren{\curl{\Ab(t)}, \curl{\vphib}}_\Om \\
	  + \sm_{\Pi} \paren{\nabla\phi(t), \vphib}_{\Pi} - \sm_{\Sm} \paren{\vb(t) \times \left(\curl{\Ab(t)}\right), \vphib}_{\Sm(t)} = 0, \label{eq:vf_A} 
\end{multline}
\begin{multline}
	 \inprod{(\alpha \pa_t u)(t), w}_{1, \Om} + \paren{\alpha(t) \vb(t) \cdot \nabla u(t), w}_{\Om} \\ + \paren{\kappa(t) \nabla u(t), \nabla w}_{\Om} = \paren{\RR_r\left(Q(t)\right), w}_{\Theta(t)}, \label{eq:vf_u}
\end{multline}
for all $\psi \in \Zs, \vphib \in \Wb_0$ and $w \in \Hs^1(\Om)$ and for a.a. $t \in (0, T)$.
Note that an equivalent saddle-point formulation of the problem \eqref{eq:vf_A} was introduced in \cite{LSV2021b}, which gives more convenience for the computation. In this paper, however, we use the formulation \eqref{eq:vf_A} for simplicity, and we note that the results obtained in \cite{LSV2021b} are still valid.
In the next step, we summarize all assumptions used in the paper and show the uniqueness of a solution to the variational problem.
\begin{description}
	\item[\text{(AS1)}\namedlabel{as:as1}{\text{(AS1)}}] $\Om$ is an open, bounded, simply-connected domain in $\R^3$ such that $\Om$ is a convex polyhedron or its boundary $\pa\Om$ is of class $\Cs^{1, 1}$. The open connected subdomains $\Sm$ and $\Pi$ are of the class $\Cs^{2,1}$ and separate from each other (see \Cref{sec:model} for more details); 
	\item[\text{(AS2)}\namedlabel{as:as2}{\text{(AS2)}}] The magnetic permeability is a constant on the entire domain $\Om$, and all material coefficients are positive constants on each subdomain, except that the electrical conductivity is vanishing on the air (see \Cref{sec:model} for more details);
	\item[\text{(AS3)}\namedlabel{as:as3}{\text{(AS3)}}] The velocity vector $\vb$ satisfies $\vb \in \CCs^1(\ovl{\Om} \times [0,T]), \vb = \zrb$ in the coil $\Pi$, {and $\vb \cdot \nv = 0$ on the boundary $\pa\Om$};
	\item[\text{(AS4)}\namedlabel{as:as4}{\text{(AS4)}}] $\tilde{u}_0 \in \Hs^1(\Om)$ and $\tilde{\Ab}_0 \in \Wb_0\cap \HHs^2(\Omega)$ satisfying $\curl{\curl{\tilde{\Ab}_0}} = \zrb$ in $\Xi(0)$;
	\item[\text{(AS5)}\namedlabel{as:as5}{\text{(AS5)}}] $j \in \Lips([0,T],\Hs^{-1/2}(\Gm))$.
\end{description}

\begin{remark}
	Each assumption is only needed for some specific analyses. However, all assumptions \ref{as:as1}-\ref{as:as5} together are necessary and sufficient for all analyses throughout the paper to be valid. Therefore, we list them in the same place for ease of readability. In the following, some comments on those assumptions are provided:
	\begin{itemize}
		\item The analysis can be extended for the case when the magnetic permeability $\mu_0$ and the thermal conductivity $\kappa_\Sm, \kappa_\Pi, \kappa_\Theta$ are Lipschitz continuous in both space and time, while the material coefficients $\sm$ and $\alpha$ on each subdomain are of class $\Cs^{1, 1}$ in both space and time. 
		\item If one would consider strongly coupled induction heating systems (e.g., the electrical conductivity $\sm$ depends on the temperature $u$), then the uniqueness of a solution might be sacrificed, as noted in \cite{CGS2017,LSV2022b}.
		\item The condition $\vb \cdot \nv = 0$ on $\pa\Om$ is not needed for our analysis to be valid. However, it makes sense since we model the entire domain $\Om$ as an isolated system, i.e., there is no exchange of mass or energy with the exterior.
		\item {In the problem \eqref{eq:AIBVP}, the initial datum $\Ab(0)$ is only given on the conductors $\Theta(0)$ because the electrical conductivity $\sm$ vanishes on the air. However, further results in this paper require $\Ab(0)$ to be extended to the entire domain $\Om$. To do so, we invoke the result in \cite[Proposition~4.1]{KMPT2000} to show that if $\tilde{\Ab}_0$ satisfies
			\[
			\tilde{\Ab}_0 \in \HHs^2(\Theta(0)), \qqq \dive{\tilde{\Ab}_0} = 0 \q \text{in} \q \Theta(0), \qqq \tilde{\Ab}_0 = \zrb \q \text{on} \q \Gm,
			\]
			then there exists an extension $\tilde{\Ab}_0 \in \HHs^2(\Om) \cap \HHs^1_0(\Om)$ with $\dive{\tilde{\Ab}_0} = 0$.}
		The strong regularity of the initial guest $\tilde{\Ab}_0$ in \ref{as:as4} is only needed for the interior regularity of the discrete solution $\Ab_i$ (see \Cref{lem:higher_regularity} or \cite{LSV2021b} for more details). However, this regularity is crucial in proving the convergence of the temporal discretization of the complete problem (see \Cref{thm:existence_phi_A,thm:existence_u}).
	\end{itemize}
\end{remark}
Next, we show the uniqueness of a solution to the variational problem \eqref{eq:vf_phi}-\eqref{eq:vf_u}.

%%%%%%%%%%%%%
\begin{theorem}[Uniqueness]\label{thm:uniqueness}
	Let the assumptions \ref{as:as1}-\ref{as:as5} be satisfied. 
	Then, the variational system \eqref{eq:vf_phi}-\eqref{eq:vf_u} admits at most one solution $\paren{\phi, \Ab, u}$ satisfying $\phi \in \Ls^2((0, T), \Zs), \Ab \in \Ls^{2}((0, T), \Wb_0)$ with $\pa_t\Ab(t) \in \LLs^2(\Theta(t))$ for a.a. $t\in (0,T)$, and $u \in \Ys_\alpha$. 
\end{theorem}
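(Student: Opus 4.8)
The plan is to exploit linearity together with the one-directional coupling of the system: \eqref{eq:vf_phi} is a stationary elliptic problem for $\phi$ alone, \eqref{eq:vf_A} depends on the data only through $\phi$, and \eqref{eq:vf_u} depends on $(\phi,\Ab)$ only through the term $\RR_r(Q)$, which is determined once $\phi$ and $\Ab$ are known. So I would take two solutions $(\phi_k,\Ab_k,u_k)$, $k=1,2$, in the stated class sharing the prescribed initial data ($\Ab_k(0)=\tilde{\Ab}_0$ on $\Theta(0)$ and $u_k(0)=\tilde{u}_0$ on $\Om$), set $\phi:=\phi_1-\phi_2$, $\Ab:=\Ab_1-\Ab_2$, $u:=u_1-u_2$, and show $\phi\equiv0$, then $\Ab\equiv\zrb$, then $u\equiv0$, in that order; each step is an energy estimate obtained by testing the subtracted equation with the difference itself and closing with Gr\"onwall's lemma, the only nonstandard point being that the time-derivative and convective contributions must be combined via the Reynolds transport theorem. \textbf{Step 1 ($\phi$).} Subtracting \eqref{eq:vf_phi} eliminates the source $j$, and testing with $\psi=\phi(t)\in\Zs$ gives $\sm_\Pi\norm{\nabla\phi(t)}_{\LLs^2(\Pi)}^2=0$ for a.a.\ $t$; since $\Pi$ is connected and $(\phi(t),1)_\Pi=0$, we get $\phi(t)=0$, hence $\phi_1=\phi_2$ in $\Ls^2((0,T),\Zs)$.

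\textbf{Step 2 ($\Ab$).} With $\phi\equiv0$ the term $\sm_\Pi(\nabla\phi,\vphib)_\Pi$ drops out of the subtracted \eqref{eq:vf_A}. I would test with $\vphib=\Ab(t)\in\Wb_0$ and integrate over $(0,t)$. For the time term I would use the vector, $\sm$-degenerate counterpart of identity \eqref{eq:continuity} (proved as in Lemma~\ref{lem:extension}, componentwise, with $\sm\ge0$ piecewise constant in place of $\alpha$): it yields $\tfrac12\norm{\sqrt{\sm(t)}\Ab(t)}_{\LLs^2(\Om)}^2$ — the initial contribution vanishing because $\Ab(0)=\zrb$ on $\Theta(0)$ and $\sm=0$ on $\Xi(0)$ — minus the Reynolds correction $\tfrac12\int_0^t\int_\Om\sm\,\dive{(\vb\,\abs{\Ab}^2)}\dx\ds$, while the curl term produces $\mu_0^{-1}\int_0^t\norm{\curl{\Ab}}_{\LLs^2(\Om)}^2\ds$. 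The moving-conductor term $\sm_\Sm(\vb\times\curl{\Ab},\Ab)_{\Sm(\cdot)}$ is bounded by the $\veps$-Young inequality, absorbing $\veps\norm{\curl{\Ab}}_{\LLs^2(\Om)}^2$ and leaving $C_\veps\norm{\sqrt{\sm}\Ab}_{\LLs^2(\Om)}^2$ (on $\Sm$ one has $\norm{\Ab}_{\LLs^2(\Sm)}=\sm_\Sm^{-1/2}\norm{\sqrt{\sm}\Ab}_{\LLs^2(\Sm)}$). The Reynolds correction localizes on $\Sm$ (it is zero on $\Xi$ since $\sm=0$ and on $\Pi$ since $\vb=\zrb$ by \ref{as:as3}), where \eqref{eq:Necas_like} — valid for vector fields — gives $\int_{\pa\Sm}\abs{\Ab}^2(\vb\cdot\nv)\ds\leqc\veps\norm{\nabla\Ab}_{\LLs^2(\Sm)}^2+C_\veps\norm{\Ab}_{\LLs^2(\Sm)}^2\leqc\veps\norm{\curl{\Ab}}_{\LLs^2(\Om)}^2+C_\veps\norm{\sqrt{\sm}\Ab}_{\LLs^2(\Om)}^2$, again absorbing the gradient part using $\Wb_0\emb\HHs^1(\Om)$ and $\norm{\cdot}_{\Wb_0}=\norm{\curl{\cdot}}_{\LLs^2(\Om)}$. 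Gr\"onwall applied to $t\mapsto\norm{\sqrt{\sm(t)}\Ab(t)}_{\LLs^2(\Om)}^2$ forces it to vanish, whence $\int_0^T\norm{\curl{\Ab}}_{\LLs^2(\Om)}^2\dt=0$, and since the $\Wb_0$-norm equals $\norm{\curl{\cdot}}_{\LLs^2(\Om)}$, $\Ab\equiv\zrb$; consequently $Q_1=Q_2$ and $\RR_r(Q_1)=\RR_r(Q_2)$.

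\textbf{Step 3 ($u$).} Subtracting \eqref{eq:vf_u} now removes the right-hand side. Testing with $w=u(t)\in\Hs^1(\Om)$ and integrating over $(0,t)$, the time term is treated by \eqref{eq:continuity} from Lemma~\ref{lem:extension} — its initial contribution vanishing because $\sqrt{\alpha}\,u\in\Cs([0,T],\Ls^2(\Om))$ and $u(0)=0$ — giving $\tfrac12\norm{\sqrt{\alpha(t)}u(t)}_{\Ls^2(\Om)}^2$ minus $\tfrac12\int_0^t\int_\Om\alpha\,\dive{(\vb\,u^2)}\dx\ds$, whereas the convective term equals $\tfrac12\int_0^t\int_\Om\alpha\,\vb\cdot\nabla u^2\dx\ds=\tfrac12\int_0^t\int_\Om\bigl[\alpha\,\dive{(\vb\,u^2)}-\alpha\,u^2\dive{\vb}\bigr]\dx\ds$ (the divergence taken inside each subdomain, $\alpha$ being piecewise constant). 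The two $\alpha\,\dive{(\vb\,u^2)}$ contributions cancel exactly, leaving only $-\tfrac12\int_0^t\int_\Om\alpha\,u^2\dive{\vb}\dx\ds$; dropping the nonnegative diffusion term $\int_0^t\norm{\sqrt{\kappa}\,\nabla u}_{\LLs^2(\Om)}^2\ds$, one obtains $\norm{\sqrt{\alpha(t)}u(t)}_{\Ls^2(\Om)}^2\leqc\int_0^t\norm{\sqrt{\alpha(s)}u(s)}_{\Ls^2(\Om)}^2\ds$ (using $\dive{\vb}\in\Ls^\infty$ and $\alpha$ bounded below by a positive constant), and Gr\"onwall yields $u\equiv0$, completing the proof.

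\textbf{Main obstacle.} The crux is not the nonlinearity — the Joule source disappears from the argument the moment $\phi$ and $\Ab$ are pinned down — but making the formal integrations by parts in time rigorous on the moving subdomains. One must be entitled to test the weak equations with the solution and to rewrite $\int_0^t\inprod{(\alpha\pa_s u)(s),u(s)}_{1,\Om}\ds$ (and its vector, $\sm$-degenerate analogue) as increments of $\tfrac12\norm{\sqrt{\alpha}\,u}_{\Ls^2(\Om)}^2$ plus Reynolds corrections; this is precisely the content of Lemma~\ref{lem:density} and identity \eqref{eq:continuity}, with Step~2 requiring their $\Wb_0$/vector adaptation. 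The second delicate point is that the Reynolds corrections are a priori of the same order as the leading dissipative terms: in Step~3 this is resolved by the exact cancellation against the genuine convective term $\alpha\,\vb\cdot\nabla u$ in the PDE, and in Step~2 by \eqref{eq:Necas_like} combined with $\Wb_0\emb\HHs^1(\Om)$ to absorb the boundary term into $\norm{\curl{\Ab}}_{\LLs^2(\Om)}$. Everything else — the elliptic estimate for $\phi$, the $\veps$-Young inequalities, and Gr\"onwall's lemma — is routine.
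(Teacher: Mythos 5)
Your argument is correct and, for the temperature part, essentially coincides with the paper's proof; the real difference is in how the electromagnetic pair is handled. The paper disposes of $(\phi,\Ab)$ in one line by invoking the uniqueness result of \cite[Theorem~3.1]{LSV2021a} for the subsystem \eqref{eq:vf_phi}--\eqref{eq:vf_A} with $j=0$, $\tilde{\Ab}_0=\zrb$, whereas you re-derive that result: a trivial elliptic estimate for $\phi$, then an energy estimate for $\Ab$ in which the Reynolds correction is absorbed via \eqref{eq:Necas_like} and the embedding $\Wb_0\emb\HHs^1(\Om)$. That buys self-containedness, but be aware that the step you describe as ``proved as in Lemma~\ref{lem:extension}, componentwise'' --- the chain rule $\int_0^t\inprod{\sm\pa_s\Ab,\Ab}\ds=\tfrac12\norm{\sqrt{\sm(t)}\Ab(t)}^2_{\LLs^2(\Om)}-\tfrac12\int_0^t\int_\Om\sm\,\dive{(\vb\abs{\Ab}^2)}\dx\ds$ under the stated regularity $\Ab\in\Ls^2((0,T),\Wb_0)$, $\pa_t\Ab(t)\in\LLs^2(\Theta(t))$ --- is not literally componentwise (the mollification/density argument of \Cref{lem:density} must be redone for $\Wb_0$-valued functions with the degenerate weight $\sm$, together with the continuity of $t\mapsto\norm{\sqrt{\sm(t)}\Ab(t)}_{\LLs^2(\Om)}$ needed for Gr\"onwall); this is precisely the technical content of the theorem the paper cites, so your sketch is sound but that lemma would have to be written out. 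For $u$, your treatment matches the paper's up to a cosmetic improvement: you cancel the Reynolds correction $\tfrac12\int\int\alpha\,\dive(\vb u^2)$ exactly against the convective term, so the diffusion term can simply be dropped, while the paper estimates both contributions by $\veps$-Young and keeps $(1-\veps)\int\norm{\nabla u}^2$ before Gr\"onwall; both close, and you correctly note that \Cref{lem:extension} supplies the continuity of $\norm{\sqrt{\alpha}u}_{\Ls^2(\Om)}$ that legitimizes the Gr\"onwall step.
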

%%%%%%%%%%%%%%%

\begin{proof}
	We assume that there exist two solutions $(\phi_1, \Ab_1, u_1)$ and $(\phi_2, \Ab_2, u_2)$ to the variational equations \eqref{eq:vf_phi}-\eqref{eq:vf_u}. Then, the solution $(\phi, \Ab)$, with $\phi = \phi_1 - \phi_2$ and $\Ab = \Ab_1 - \Ab_2$, solves the linear system \eqref{eq:vf_phi}-\eqref{eq:vf_A} with given data $j = 0$ and $\tilde{\Ab}_0 = \zrb$. By means of \cite[Theorem~3.1]{LSV2021a}, we get that $\phi = 0$ and $\Ab = \zrb$ in the corresponding spaces. This result implies that $u = u_1 - u_2$ also fulfills \eqref{eq:vf_u} with $\tilde{u}_0 = 0$ and $Q = 0$. Setting $w = u(t)$ in \eqref{eq:vf_u} and then integrating in time over $(0, \xi) \sst (0, T)$ gives us that
	\begin{equation}
		\label{eq:c}
		\int\limits_0^{\xi} \inprod{(\alpha \pa_t u)(t), u(t)}_{1, \Om} \dt + \int\limits_0^{\xi} \paren{\alpha(t) \vb(t) \cdot \nabla u(t), u(t)}_{\Om} \dt + \int\limits_0^{\xi} \paren{\kappa(t) \nabla u(t), \nabla u(t)}_{\Om} \dt = 0.
	\end{equation}
	We can immediately see that 
	\[
	\left| \int\limits_0^{\xi} \paren{\alpha(t) \vb(t) \cdot \nabla u(t), u(t)}_{\Om} \dt \,\right| 
	\leq  \veps \int\limits_0^{\xi} \norm{\nabla u(t)}^2_{\LLs^2(\Om)} \dt + C_{\veps} \int\limits_0^{\xi} \norm{\sqrt{\alpha(t)} u(t)}^2_{\Ls^2(\Om)} \dt,
	\]
	and
	\[
	\int\limits_0^{\xi} \paren{\kappa(t) \nabla u(t), \nabla u(t)}_{\Om} \dt \geq \min\{\kappa_{\Sm}, \kappa_{\Pi}, \kappa_{\Xi} \} \int\limits_0^{\xi} \norm{\nabla u(t)}^2_{\LLs^2(\Om)} \dt. 
	\]
	The first integral on the left-hand side (LHS) of \eqref{eq:c} can be rewritten as follows
	\begin{multline*}
	\int\limits_0^{\xi} \inprod{(\alpha \pa_t u)(t), u(t)}_{1, \Omega} \dt
	\stackrel{\eqref{eq:continuity}}{=} \dfrac{1}{2} \norm{\sqrt{\alpha(\xi)}u(\xi)}^2_{\Ls^2(\Omega)}\\
	 - \int\limits_0^{\xi} \paren{\alpha(t)\nabla u(t) \cdot \vb(t), u(t)}_{\Omega} \dt 
	- \dfrac{1}{2} \int\limits_0^{\xi} \paren{\alpha(t) u(t) \dive{\vb(t)}, u(t)}_{\Omega} \dt.        
	\end{multline*}
	The integrals on the right-hand side (RHS) of this identity can be handled as above.
	Therefore, we arrive at
	\[
	\norm{\sqrt{\alpha(\xi)} u(\xi)}^2_{\Ls^2(\Om)} + (1-\veps)  \int\limits_0^{\xi} \norm{\nabla u(t)}^2_{\LLs^2(\Om)} \dt \leqslant C_{\veps} \int\limits_0^{\xi} \norm{\sqrt{\alpha(t)}u(t)}^2_{\Ls^2(\Om)} \dt. 
	\]
	Finally, fixing a sufficiently small $\veps > 0$ and then applying a Gr\"{o}nwall argument shows that $u = 0$ in $\Ls^2((0, T), \Hs^1(\Om))$. Note that the Gr\"{o}nwall argument can be done thanks to the continuity in time of $\norm{\sqrt{\alpha} u}_{\Ls^2(\Om)}$ (cf. \Cref{lem:extension}).
\end{proof}

%--------------------------------------------------------------------------
\section{Time discretization}
\label{sec:TD}

In this section, we design a time-discrete approximation scheme based on the backward Euler method for solving the variational system. The time interval $[0, T]$ is equidistantly partitioned into $n \in \Z^+$ subintervals with time step $\tau = \frac{T}{n}$. At time-point $t_i = i\tau, i = 1, 2, \ldots , n$, we introduce the following notations for any function $f$ and any time-dependent domain $\om$
\[
f_i = f(t_i), \qqqq \delta f_i = \dfrac{f_i - f_{i-1}}{\tau}, \qqqq \om_i = \om(t_i).
\]
Starting from the initial data $\tilde{\Ab}_0$ and $\tilde{u}_0$, we find the solution $\phi_i\in \Zs, \Ab_i \in \Wb_0$ and $u_i \in \Hs^1(\Om)$, with $i = 1, 2, \ldots, n$, such that the following identities are valid for all $\psi \in \Zs, \vphib \in \Wb_0$ and $w \in \Hs^1(\Om)$
\begin{equation}
	 \sm_{\Pi} \paren{\nabla\phi_i, \nabla\psi}_{\Pi} + \inprod{j_i, \psi}_{1/2, \Gm} = 0, \label{eq:TD_phi} 
	 \end{equation}
	\begin{multline}
	 \paren{\sm_i \delta \Ab_i, \vphib}_{\Theta_i} + \mu^{-1}_0 \paren{\curl{\Ab_i}, \curl{\vphib}}_\Om\\ + \sm_{\Pi} \paren{\nabla\phi_i, \vphib}_{\Pi} - \sm_{\Sm} \paren{\vb_i \times (\curl{\Ab_i}), \vphib}_{\Sm_i} = 0, \label{eq:TD_A} 
	\end{multline}
	\begin{equation} \paren{\alpha_i \delta u_i, w}_{\Om} + \paren{\alpha_i \vb_i \cdot \nabla u_i, w}_{\Om} + \paren{\kappa_i \nabla u_i, \nabla w}_{\Omega} = \paren{\RR_r(Q_i), w}_{\Theta_i}, \label{eq:TD_u}
\end{equation}
where 
\[
Q_i = \sigma_i \abs{ \delta\Ab_i + \chi_{\Pi} \nabla\phi_i - \vb_i \times (\curl{\Ab}_i)}^2. 
\]
At each iteration step $i$, the equation \eqref{eq:TD_phi} is firstly solved, then followed by \eqref{eq:TD_A} and \eqref{eq:TD_u}, respectively. In the next lemma, the solvability of the time discretization system will be proven.

%%%%%%%%%
\begin{lemma}[Solvability] 
	\label{lem:solvability}
	Let the assumptions \ref{as:as1}-\ref{as:as5} be fulfilled. Then, $\phi_0 \in \Zs$ exists uniquely. Moreover, there exists a positive constant $\tau_0$ such that for any $i = 1, 2, \ldots, n$, with $\tau = \frac{T}{n} < \tau_0$, there exists a unique triplet  $\left(\phi_i, \Ab_i, u_i \right) \in \Zs \times \Wb_0 \times \Hs^1(\Om)$ solving the system \eqref{eq:TD_phi}-\eqref{eq:TD_u}.
\end{lemma}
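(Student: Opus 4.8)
The plan is to solve \eqref{eq:TD_phi}, \eqref{eq:TD_A} and \eqref{eq:TD_u} successively at each level $i$. Because of this ordering, every sub-problem is \emph{linear} in its own unknown, so the natural tool is the Lax--Milgram lemma on $\Zs$, $\Wb_0$ and $\Hs^1(\Om)$ respectively. Boundedness of all the bilinear forms and of the right-hand functionals will be routine; the only delicate point is the coercivity of the forms attached to \eqref{eq:TD_A} and \eqref{eq:TD_u}, where the motion terms $\sm_\Sm\paren{\vb_i\times(\curl{\Ab_i}),\cdot}_{\Sm_i}$ and $\paren{\alpha_i\vb_i\cdot\nabla u_i,\cdot}_\Om$ have no definite sign and must be absorbed, and it is exactly this absorption that forces the smallness condition $\tau<\tau_0$.

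First I would handle $\phi_0$ and, for $i\ge 1$, $\phi_i$: the left-hand side of \eqref{eq:TD_phi} is the symmetric bounded bilinear form $(\phi,\psi)\mapsto\sm_\Pi\paren{\nabla\phi,\nabla\psi}_\Pi$, coercive on $\Zs$ because $\norm{\nabla\cdot}_{\LLs^2(\Pi)}$ is an equivalent norm there (Poincar\'e--Wirtinger); the functional $\psi\mapsto-\inprod{j_i,\psi}_{1/2,\Gm}$ is bounded via the trace theorem together with $j_i\in\Hs^{-1/2}(\Gm)$ (assumption \ref{as:as5}) and is well defined on $\Zs$ thanks to the compatibility condition \eqref{eq:compatibility}. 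Lax--Milgram then gives a unique $\phi_i\in\Zs$; no restriction on $\tau$ is needed here, and $\phi_0$ is obtained from $j(0)$, which is meaningful since $j\in\Lips([0,T],\Hs^{-1/2}(\Gm))$.

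Next I would solve \eqref{eq:TD_A} for $\Ab_i$, with $\phi_i$ and the previous iterate $\Ab_{i-1}$ moved to the right-hand side. The left-hand bilinear form on $\Wb_0$ is bounded because $\Wb_0\emb\HHs^1(\Om)$. Testing with $\vphib=\Ab_i$, the curl term contributes $\mu_0^{-1}\norm{\Ab_i}^2_{\Wb_0}$, the discrete time-derivative term $\tfrac{1}{\tau}\norm{\sqrt{\sm_i}\Ab_i}^2_{\LLs^2(\Theta_i)}$ is nonnegative, and the motion term is bounded by the $\veps$-Young inequality by $\veps\norm{\curl{\Ab_i}}^2_{\LLs^2(\Om)}+C_\veps\norm{\Ab_i}^2_{\LLs^2(\Sm_i)}$. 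Choosing $\veps=\tfrac{1}{2}\mu_0^{-1}$ and using $\norm{\Ab_i}^2_{\LLs^2(\Sm_i)}\le\norm{\Ab_i}^2_{\LLs^2(\Theta_i)}\le(\min\{\sm_\Sm,\sm_\Pi\})^{-1}\norm{\sqrt{\sm_i}\Ab_i}^2_{\LLs^2(\Theta_i)}$, the residual term is swallowed by the time-derivative term once $\tau<\min\{\sm_\Sm,\sm_\Pi\}/C_\veps$, leaving coercivity with constant $\tfrac{1}{2}\mu_0^{-1}$ on $\Wb_0$. Since the right-hand side is a bounded functional on $\Wb_0$ (again via $\Wb_0\emb\LLs^2(\Om)$, and with $\Ab_{i-1}\in\Wb_0$, $\nabla\phi_i\in\LLs^2(\Pi)$), Lax--Milgram in its non-symmetric form yields a unique $\Ab_i$; alternatively one may simply invoke the corresponding statement of \cite{LSV2021b}.

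Finally, with $\phi_i,\Ab_i$ in hand, $Q_i$ is well defined and $\RR_r(Q_i)=\min(r,Q_i)\in\Ls^\infty(\Theta_i)\sst\Ls^2(\Theta_i)$, so the right-hand side of \eqref{eq:TD_u} is a bounded functional on $\Hs^1(\Om)$. Testing the left-hand form with $w=u_i$ produces $\tfrac{1}{\tau}\norm{\sqrt{\alpha_i}u_i}^2_{\Ls^2(\Om)}+\paren{\alpha_i\vb_i\cdot\nabla u_i,u_i}_\Om+\paren{\kappa_i\nabla u_i,\nabla u_i}_\Om$; the diffusion term dominates $\min\{\kappa_\Sm,\kappa_\Pi,\kappa_\Xi\}\norm{\nabla u_i}^2_{\LLs^2(\Om)}$, the convection term is treated by $\veps$-Young, and its $\Ls^2(\Om)$-part is absorbed into the time-derivative term because $\alpha_i\ge\min\{\alpha_\Sm,\alpha_\Pi,\alpha_\Xi\}>0$, provided $\tau$ is again small enough. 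Coercivity of the (in general $\tau$-dependent) form on $\Hs^1(\Om)$ follows, Lax--Milgram gives a unique $u_i\in\Hs^1(\Om)$, and one sets $\tau_0$ to be the minimum of the two threshold values found above. The main obstacle is precisely the sign-indefiniteness of the motion and convection terms; it is overcome only because the backward-Euler mass terms carry the large coefficient $1/\tau$, which is the structural reason why a smallness assumption on $\tau$ cannot be avoided in this argument.
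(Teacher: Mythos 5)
Your proposal is correct and follows essentially the same route as the paper: solve the three problems sequentially, treat each as a linear problem and apply Lax--Milgram, with ellipticity of the forms for \eqref{eq:TD_A} and \eqref{eq:TD_u} obtained by absorbing the indefinite motion/convection terms into the $1/\tau$ mass terms, which is exactly where the smallness condition $\tau<\tau_0$ enters (and the right-hand functionals are bounded since $u_{i-1}\in\Hs^1(\Om)$, $\Ab_{i-1}\in\Wb_0$ and $\RR_r(Q_i)\le r$). The only difference is presentational: the paper simply cites \cite[Lemma~4.1]{LSV2021b} for the $(\phi_i,\Ab_i)$ subsystem and writes out only the argument for $u_i$, whereas you carry out the coercivity estimates for all three subproblems explicitly.
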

%%%%%%%%%%

\begin{proof}
	The proof of the solvability of the system \eqref{eq:TD_phi}-\eqref{eq:TD_A} can be adopted from \cite[Lemma~4.1]{LSV2021b}, so we omit this part.
	Let us define a bilinear form $e_i : \Hs^1(\Om) \times \Hs^1(\Om) \to \R$, with $i = 1, 2, \ldots, n$, such that
	\[
	e_i(u, w) = \frac{1}{\tau} \paren{\alpha_i u, w}_{\Om} + \paren{\alpha_i \vb_i \cdot \nabla u, w}_{\Om} + \paren{\kappa_i \nabla u, \nabla w}_{\Omega}.
	\]
	Then, the variational problem \eqref{eq:TD_u} can be rewritten as follows
	\begin{equation}
		\label{eq:TD_u_1}
		e_i(u_i, w) = \frac{1}{\tau} \paren{\alpha_i u_{i-1}, w}_{\Om} + \paren{\RR_r(Q_i), w}_{\Theta_i}.
	\end{equation}
	We can easily get that
	\[
	e_i(u, w) \leqc \norm{u}_{\Hs^1(\Om)} \norm{w}_{\Hs^1(\Om)},
	\]
	which implies the boundedness of the form $e_i$. For any $i = 1, 2, \ldots, n$, the Cauchy-Schwarz and $\veps$-Young inequalities allow us to show that 
	\begin{align*}
		e_i(u, u) & = \frac{1}{\tau} \paren{\alpha_i u , u}_{\Om} + \paren{\alpha_i \vb_i \cdot \nabla u, u}_{\Om} + \paren{\kappa_i \nabla u, \nabla u}_{\Omega} \\
		& \geqc \paren{\frac{1}{\tau} - C_{\veps}} \norm{u}^2_{\Ls^2(\Om)} + (1 - \veps) \norm{\nabla u}^2_{\LLs^2(\Om)}.
	\end{align*}
	We fix a sufficiently small $\veps > 0$, then choose a sufficiently small time step $\tau < \tau_0$ to claim that the form $e_i$ is $\Hs^1(\Om)$-elliptic.
	Since $u_{i-1} \in \Hs^1(\Om)$ is given and $\RR_r(Q_i)$ is bounded by the constant $r$, the RHS of \eqref{eq:TD_u_1} defines a bounded linear functional on $\Hs^1(\Om)$. As a consequence, there exists a unique solution $u_i \in \Hs^1(\Om)$ to the problem \eqref{eq:TD_u} for any $i = 1, 2, \ldots, n$, according to the Lax-Milgram lemma \cite[Theorem~18.E]{Zeidler-IIA1990}.
\end{proof}

Now, some a priori estimates for iterates will be investigated. The following stability estimate for the solution $\Ab_i$ is directly derived from \cite[Lemma~4.3]{LSV2021b}.
%--------------------------------------------------------------------------
\begin{lemma}[A priori estimate for $\Ab_i$]
	\label{lem:est_A}
	Let the assumptions \ref{as:as1}-\ref{as:as5} be fulfilled. Then, there exist positive constants $\tau_0$ and $C$ such that for any $\tau < \tau_0$, there holds that
	\begin{multline}
		\label{eq:est_A}
		\max\limits_{1 \leq l \leq n} \norm{\delta\Ab_l}^2_{\LLs^2(\Theta_l)} + \max\limits_{1 \le l \le n} \norm{\curl \Ab_l}^2_{\LLs^2(\Om)} \\ + \sum\limits_{i=1}^n \norm{\curl \delta\Ab_i}^2_{\LLs^2(\Om)} \tau + \sum\limits_{i=1}^n \norm{\delta\Ab_i - \delta\Ab_{i-1}}^2_{\LLs^2(\Theta_{i-1})} \le C. 
	\end{multline}
\end{lemma}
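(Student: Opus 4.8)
The plan is to reproduce, for the present moving geometry, the two-level energy argument of \cite[Lemma~4.3]{LSV2021b}: a \emph{basic} estimate controlling $\max_l\norm{\curl{\Ab_l}}_{\LLs^2(\Om)}$ and $\sum_i\norm{\delta\Ab_i}^2_{\LLs^2(\Theta_i)}\tau$, followed by a \emph{differentiated} estimate controlling the remaining three terms of \eqref{eq:est_A}.

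For the basic estimate I would test \eqref{eq:TD_A} with $\vphib=\delta\Ab_i$, multiply by $\tau$, and sum over $i=1,\dots,l$. On the stiffness term use
\[
\paren{\curl{\Ab_i},\curl{\delta\Ab_i}}_\Om=\frac{1}{2\tau}\paren{\norm{\curl{\Ab_i}}^2_{\LLs^2(\Om)}-\norm{\curl{\Ab_{i-1}}}^2_{\LLs^2(\Om)}+\norm{\curl{\Ab_i}-\curl{\Ab_{i-1}}}^2_{\LLs^2(\Om)}};
\]
the source term $\sm_\Pi\paren{\nabla\phi_i,\delta\Ab_i}_\Pi$ is absorbed by $\veps$-Young together with the uniform bound $\norm{\nabla\phi_i}_{\LLs^2(\Pi)}\leqc1$ provided by \eqref{eq:TD_phi} and \ref{as:as5}, and the convective term $\sm_\Sm\paren{\vb_i\times(\curl{\Ab_i}),\delta\Ab_i}_{\Sm_i}$ is bounded by $\veps\norm{\delta\Ab_i}^2_{\LLs^2(\Sm_i)}+C_\veps\norm{\curl{\Ab_i}}^2_{\LLs^2(\Om)}$. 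After absorbing the $\veps$-terms into the coercive contribution $\norm{\sqrt{\sm_i}\delta\Ab_i}^2_{\LLs^2(\Theta_i)}$, using $\norm{\curl{\tilde{\Ab}_0}}_{\LLs^2(\Om)}\leqc1$ from \ref{as:as4}, and applying a discrete Gr\"onwall argument, one obtains the first two quantities in \eqref{eq:est_A}.

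For the differentiated estimate the level $i=1$ is treated separately: testing \eqref{eq:TD_A} at $i=1$ with $\vphib=\delta\Ab_1$ and integrating the stiffness term by parts, the regularity $\tilde{\Ab}_0\in\HHs^2(\Om)$ together with $\curl{\curl{\tilde{\Ab}_0}}=\zrb$ in $\Xi(0)$ (using \ref{as:as4} and the $\HHs^2(\Om)$-extension of \cite[Proposition~4.1]{KMPT2000}) and the vanishing of all interface/boundary traces of $\curl{\delta\Ab_1}$ give $\norm{\delta\Ab_1}_{\LLs^2(\Theta_1)}+\sqrt{\tau}\,\norm{\curl{\delta\Ab_1}}_{\LLs^2(\Om)}\leqc1$. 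For $i\ge2$ I would subtract \eqref{eq:TD_A} at the levels $i-1$ and $i$ and test with $\vphib=\delta\Ab_i$. The one genuinely nonstandard point is that the mass term $\paren{\sm_i\delta\Ab_i,\vphib}_{\Theta_i}-\paren{\sm_{i-1}\delta\Ab_{i-1},\vphib}_{\Theta_{i-1}}$ does not telescope, since the subdomain $\Sm(t)$ moves; I would restore the telescoping structure by inserting Reynolds corrections obtained from \eqref{eq:Reynolds2} applied to the time-independent integrands $\delta\Ab_{i-1}\cdot\delta\Ab_i$ and $\abs{\delta\Ab_{i-1}}^2$ over $\Sm(t)$, e.g.
\[
\paren{\delta\Ab_{i-1},\delta\Ab_i}_{\Sm_{i-1}}=\paren{\delta\Ab_{i-1},\delta\Ab_i}_{\Sm_i}-\int\limits_{t_{i-1}}^{t_i}\int\limits_{\Sm(t)}\dive{\paren{\delta\Ab_{i-1}\cdot\delta\Ab_i\,\vb(t)}}\dx\dt,
\]
so that one recovers $\tfrac12\paren{\norm{\sqrt{\sm_i}\delta\Ab_i}^2_{\LLs^2(\Theta_i)}-\norm{\sqrt{\sm_i}\delta\Ab_{i-1}}^2_{\LLs^2(\Theta_{i-1})}+\norm{\sqrt{\sm_i}(\delta\Ab_i-\delta\Ab_{i-1})}^2_{\LLs^2(\Theta_i)}}$ up to $O(\tau)$ remainders. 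These remainders, and the convective difference $\sm_\Sm\itv{\paren{\vb_i\times\curl{\Ab_i},\delta\Ab_i}_{\Sm_i}-\paren{\vb_{i-1}\times\curl{\Ab_{i-1}},\delta\Ab_i}_{\Sm_{i-1}}}$ (decomposed as $\tau\,\delta\vb_i\times\curl{\Ab_i}+\tau\,\vb_{i-1}\times\curl{\delta\Ab_i}$ plus a moving-domain remainder), are controlled by $\veps\tau\norm{\curl{\delta\Ab_i}}^2_{\LLs^2(\Om)}+C_\veps\tau\paren{\norm{\delta\Ab_i}^2_{\LLs^2(\Theta_i)}+\norm{\delta\Ab_{i-1}}^2_{\LLs^2(\Theta_{i-1})}}+C\tau\norm{\curl{\Ab_i}}^2_{\LLs^2(\Om)}$, after invoking $\Wb_0\emb\HHs^1(\Om)$ to bound $\norm{\nabla\delta\Ab_j}_{\LLs^2(\Om)}\leqc\norm{\curl{\delta\Ab_j}}_{\LLs^2(\Om)}$; the first $\veps$-term is absorbed by the new coercive term $\mu_0^{-1}\tau\norm{\curl{\delta\Ab_i}}^2_{\LLs^2(\Om)}$, while $\tau\sm_\Pi\paren{\nabla\delta\phi_i,\delta\Ab_i}_\Pi$ is handled as before using $\norm{\nabla\delta\phi_i}_{\LLs^2(\Pi)}\leqc1$ from \eqref{eq:TD_phi} and \ref{as:as5}.

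Summing over $i=2,\dots,l$, absorbing the $\veps$-terms, and inserting the outputs of the basic estimate ($\norm{\curl{\Ab_i}}_{\LLs^2(\Om)}\leqc1$, $\sum_i\norm{\delta\Ab_i}^2_{\LLs^2(\Theta_i)}\tau\leqc1$) together with $\norm{\delta\Ab_1}_{\LLs^2(\Theta_1)}\leqc1$, then yields (possibly after a further discrete Gr\"onwall step) the bound $\max_l\norm{\delta\Ab_l}^2_{\LLs^2(\Theta_l)}+\sum_i\norm{\curl{\delta\Ab_i}}^2_{\LLs^2(\Om)}\tau+\sum_i\norm{\delta\Ab_i-\delta\Ab_{i-1}}^2_{\LLs^2(\Theta_i)}\leqc1$; trading $\LLs^2(\Theta_i)$ for $\LLs^2(\Theta_{i-1})$ in the last sum costs one more absorbable $O(\tau)$ Reynolds correction, which is precisely \eqref{eq:est_A}. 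I expect the main obstacle to be exactly this bookkeeping of moving-domain mismatch terms in the discrete summation by parts: nothing telescopes on its own, and every correction must be controlled through the $\HHs^1$-regularity of the increments $\delta\Ab_i$ (via $\Wb_0\emb\HHs^1(\Om)$) and the strong initial regularity in \ref{as:as4}; apart from this, the electromagnetic block is identical to \cite[Lemma~4.3]{LSV2021b}.
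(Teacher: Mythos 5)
Your overall two-level strategy (basic estimate by testing with $\delta\Ab_i\tau$ and summing, then a differentiated estimate obtained by subtracting consecutive time levels and testing with $\delta\Ab_i$, with Reynolds-type corrections restoring the telescoping structure on the moving conductor) is indeed the structure of the proof this paper relies on: the paper does not reprove the lemma but defers entirely to \cite[Lemma~4.3]{LSV2021b}, and your basic estimate and mass-term bookkeeping match that argument. The mass-term corrections are fine as you set them up, because $\delta\Ab_{i-1},\delta\Ab_i\in\Wb_0\emb\HHs^1(\Om)$ and the $\Wb_0$-norm controls the full $\HHs^1(\Om)$-norm, so the divergence-form remainders are absorbable into $\mu_0^{-1}\tau\norm{\curl{\delta\Ab_i}}^2_{\LLs^2(\Om)}$ plus Gr\"onwall terms.

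There is, however, a genuine gap at exactly the step you flag as the nonstandard one: the moving-domain remainder of the convective term, i.e.\ $\paren{\vb_{i-1}\times(\curl{\Ab_{i-1}}),\delta\Ab_i}_{\Sm_i}-\paren{\vb_{i-1}\times(\curl{\Ab_{i-1}}),\delta\Ab_i}_{\Sm_{i-1}}$. To turn this mismatch into an $O(\tau)$ quantity you must either apply the Reynolds/divergence identity to the integrand $(\vb_{i-1}\times\curl{\Ab_{i-1}})\cdot\delta\Ab_i$, which requires differentiating $\curl{\Ab_{i-1}}$ in a neighbourhood of the workpiece trajectory, or extract smallness from the thin set $\Sm_i\,\triangle\,\Sm_{i-1}$, which requires integrability of $\curl{\Ab_{i-1}}$ better than $\LLs^2$ there. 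Membership $\Ab_{i-1}\in\Wb_0$ gives only $\curl{\Ab_{i-1}}\in\LLs^2(\Om)$, and the embedding $\Wb_0\emb\HHs^1(\Om)$ you invoke controls $\nabla\delta\Ab_i$, not $\nabla(\curl{\Ab_{i-1}})$; with $\LLs^2$ information alone the mismatch term cannot be bounded by $\veps\tau\norm{\curl{\delta\Ab_i}}^2_{\LLs^2(\Om)}+C_\veps\tau(\cdots)$ and the summation over $i$ blows up. This is precisely why the paper stresses that the proof of \Cref{lem:est_A} relies on the higher interior regularity of \Cref{lem:higher_regularity}, which gives $\curl{\Ab_{i-1}}\in\HHs^1(\Om^\prime)$ on a compact neighbourhood $\Om^\prime\ssubset\Om$ of the workpiece trajectory together with the bound $\norm{\curl{\Ab_{i-1}}}_{\HHs^1(\Om^\prime)}\leqc\norm{\delta\Ab_{i-1}}_{\LLs^2(\Theta_{i-1})}+\norm{\curl{\Ab_{i-1}}}_{\LLs^2(\Om)}+\norm{\nabla\phi_{i-1}}_{\LLs^2(\Pi)}$, whose right-hand side is then absorbed by the Gr\"onwall loop. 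Your proposal never invokes this lemma, so as written the differentiated estimate does not close; once \Cref{lem:higher_regularity} is inserted at this point (and analogously wherever $\curl{\Ab_{i-1}}$ is paired with a moving-domain mismatch), your argument coincides with the cited proof.
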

This a priori estimate was thoroughly proved in \cite{LSV2021b}. It is noteworthy that the proof relies on the following property of the solution $\Ab_i$.
\begin{lemma}[Higher interior regularity]
	\label{lem:higher_regularity}
	Let the assumptions \ref{as:as1}-\ref{as:as5} be fulfilled. Then, for any $i = 1, 2, \ldots, n$, $\curl{\Ab_i} \in \HHs^1(\Om^\prime)$ for any subset $\Om^\prime \sst \sst \Om$ (i.e., $\ovl{\Om^\prime} \sst \Om$). Moreover, there exists a constant $C(\Om^{\prime}) > 0$ such that
	\[
	\norm{\curl{\Ab_i}}_{\HHs^1(\Om^\prime)} \le C \paren{\norm{\delta \Ab_i}_{\LLs^2(\Theta_i)} + \norm{\curl{\Ab_i}}_{\LLs^2(\Om)} + \norm{\nabla \phi_i}_{\LLs^2(\Pi)}}.
	\]
\end{lemma}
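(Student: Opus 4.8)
The plan is to derive the claimed interior regularity of $\curl{\Ab_i}$ directly from the discrete equation \eqref{eq:TD_A} by localizing and exploiting that $\sm$ vanishes on the air while $\curl{\curl{\Ab_i}}$ satisfies a good elliptic equation in the conductor. First I would recall that testing \eqref{eq:TD_A} with $\vphib \in \Wb_0$ shows that, in the sense of distributions on $\Om$,
\[
\mu_0^{-1} \curl{\curl{\Ab_i}} = -\sm_i \delta\Ab_i - \chi_{\Pi}\sm_{\Pi}\nabla\phi_i + \chi_{\Sm_i}\sm_{\Sm}\, \vb_i \times (\curl{\Ab_i}) \ =: \ \Jb_i,
\]
and the right-hand side $\Jb_i$ lies in $\LLs^2(\Om)$ with $\norm{\Jb_i}_{\LLs^2(\Om)} \leqc \norm{\delta\Ab_i}_{\LLs^2(\Theta_i)} + \norm{\curl{\Ab_i}}_{\LLs^2(\Om)} + \norm{\nabla\phi_i}_{\LLs^2(\Pi)}$, using \ref{as:as3} to bound $\vb_i$ in $\CCs^1$. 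Together with $\dive{\Ab_i} = 0$ and the already-known $\Ab_i \in \HHs^1(\Om)$ (from the embedding $\Wb_0 \emb \HHs^1(\Om)$), this puts us in the setting of regularity for the curl-curl operator with an $\LLs^2$ source.

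Next I would set $\wb_i := \curl{\Ab_i}$. Since $\dive{\wb_i} = 0$ automatically and $\curl{\wb_i} = \mu_0 \Jb_i \in \LLs^2(\Om)$, the vector Laplacian identity $-\Delta \wb_i = \curl{\curl{\wb_i}} - \nabla\dive{\wb_i} = \mu_0\,\curl{\Jb_i}$ holds distributionally — but $\curl{\Jb_i}$ need not be a function, so instead I would argue variationally: for any fixed $\Om' \ssubset \Om$, pick an intermediate domain $\Om'' $ with $\Om' \ssubset \Om'' \ssubset \Om$ and a cutoff $\zeta \in \Cs^\infty_0(\Om'')$ with $\zeta \equiv 1$ on $\Om'$, and test the distributional identity $\curl{\wb_i} = \mu_0\Jb_i$, $\dive{\wb_i}=0$ against $\curl$ and $\dive$ of suitable test fields built from $\zeta$. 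This is the standard interior $\HHs^1$-regularity argument for $\{\dive, \curl\} \in \LLs^2$: one obtains $\zeta\wb_i \in \HHs^1(\R^3)$ with the estimate $\norm{\wb_i}_{\HHs^1(\Om')} \leqc \norm{\curl{\wb_i}}_{\LLs^2(\Om'')} + \norm{\dive{\wb_i}}_{\LLs^2(\Om'')} + \norm{\wb_i}_{\LLs^2(\Om'')}$. Feeding in $\curl{\wb_i} = \mu_0\Jb_i$, $\dive{\wb_i} = 0$, and the bound on $\norm{\Jb_i}_{\LLs^2(\Om)}$ yields exactly the asserted inequality, with $C$ depending on $\Om'$ through the cutoff.

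The main obstacle is the low regularity and possible jumps of $\Jb_i$ across the interface $\pa\Theta_i$: since $\sm$ and the terms $\delta\Ab_i$, $\vb_i\times\curl{\Ab_i}$ are only $\LLs^2$ and discontinuous at $\pa\Theta_i$, one cannot bootstrap $\wb_i$ beyond $\HHs^1$, and one must be careful that the interior estimate only uses $\Jb_i \in \LLs^2$ and never its derivatives — which is why the variational (weak) formulation of the curl-curl regularity, rather than a naive application of elliptic regularity to $-\Delta\wb_i$, is essential. A secondary point is checking that $\Jb_i$ is genuinely in $\LLs^2(\Om)$ and not merely $\LLs^2(\Theta_i)$: on the air $\Xi_i$ the source vanishes because $\sm = 0$ there, so $\curl{\curl{\Ab_i}} = \zrb$ on $\Xi_i$, consistent with \ref{as:as4}; this is what makes the global $\LLs^2$ bound on $\Jb_i$ reduce to norms over $\Theta_i$ and over $\Pi$ only. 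Since the claim is stated as already established (``directly derived from \cite[Lemma~4.3]{LSV2021b}'' — here rather the converse, it is used to prove that lemma), I would in practice simply cite the corresponding interior-regularity result in \cite{LSV2021b} and indicate that the present right-hand side is identical, noting that the strong hypothesis \ref{as:as4} on $\tilde{\Ab}_0$ is what allows the argument to be initialized at $i=1$ and propagated.
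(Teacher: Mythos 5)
Your proposal hinges on the claim that testing \eqref{eq:TD_A} with $\vphib\in\Wb_0$ yields the distributional identity $\mu_0^{-1}\curl{\curl{\Ab_i}}=\Jb_i$ in $\Om$, so that $\curl{\Ab_i}$ has both divergence and curl in $\LLs^2$ locally and the standard interior div--curl estimate applies. That step is a genuine gap, and it is precisely the mistake in the earlier proof of \cite{LSV2021b} that the present lemma is written to repair (so ``simply citing'' that reference, as your last paragraph suggests, is not available either). The test space $\Wb_0$ consists only of divergence-free fields ($\CCs^\infty_0(\Om)\not\sst\Wb_0$, and gradients $\nabla\chi$ with $\chi\in\Hs^1_0(\Om)$ are not admissible test fields), so the variational equation determines the functional $\curl{\curl{\Ab_i}}\in\HHs^{-1}(\Om)$ only modulo gradients: by the de Rham-type lemma \cite[Lemma~2.1 on p.~22]{Girault1986} one gets $\mu_0^{-1}\curl{\curl{\Ab_i}}=\Jb_i+\nabla v_i$ with some $v_i\in\Ls^2(\Om)$. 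The corrector is in general genuinely nonzero: $\curl{\curl{\Ab_i}}$ is divergence-free, while $\dive{\Jb_i}$ has no reason to vanish (it contains $\sm_i\delta\Ab_i$ and jumps across $\pa\Theta_i$), so the identity you assert cannot hold as stated. Consequently $\curl{(\curl{\Ab_i})}$ is not known a priori to lie in $\LLs^2_{loc}$, the hypothesis your div--curl interior estimate needs, and even after repairing the qualitative step the quantitative bound would pick up a term in $\nabla v_i$ that is not obviously controlled by the stated right-hand side.

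The paper avoids the gradient in two ways: first, it takes the curl of the corrected identity, so that with $\Bb_i=\curl{\Ab_i}$ one has $\dive{\Bb_i}=0$ and $-\Delta\Bb_i=\curl{\pb_i}\in\HHs^{-1}(\Om)$, and interior regularity for the Laplacian gives $\Bb_i\in\HHs^1(\Om^\prime)$ qualitatively; second, for the estimate it tests the equation on $\Om^\star$ only against divergence-free fields, choosing the Caccioppoli-type test function $\curl{\paren{\gamma^2\curl{\Ab_i}}}\in\HHs_0(\textup{div}_0,\Om^\star)$, which again never sees $\nabla v_i$. If you prefer to keep your route, you would have to show in addition that $\nabla v_i\in\LLs^2_{loc}(\Om)$ (e.g.\ via $\Delta v_i=-\dive{\pb_i}\in\Hs^{-1}(\Om)$ together with interior elliptic regularity and a Ne\v{c}as-type bound on $\norm{v_i}_{\Ls^2(\Om)}$ after normalization) and that it is controlled by $\norm{\delta\Ab_i}_{\LLs^2(\Theta_i)}+\norm{\curl{\Ab_i}}_{\LLs^2(\Om)}+\norm{\nabla\phi_i}_{\LLs^2(\Pi)}$ --- which amounts to reconstructing the paper's argument rather than bypassing it.
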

The proof of this interior regularity was provided in \cite{LSV2021b}, which unfortunately had a mistake (it is not justified to consider $\curl{\curl{\Ab_i}}$ as an element in $\LLs^2(\Om)$ since $\CCs^\infty_0(\Om) \not\subset \Wb_0$). In the following, we give a corrected proof of \Cref{lem:higher_regularity} that is adopted from \cite[Lemma~4.1.2]{Le2022}.
\begin{proof}[Proof of \Cref{lem:higher_regularity}]
	For any $i = 1, 2, \ldots, n$, let us denote 
	\[
	\pb_i := \mu_0 \paren{\sm_{\Sm} \vb_i \times (\curl{\Ab_i}) - \sm_i \delta \Ab_i - \chi_{\Pi} \sm_{\Pi} \nabla \phi_i} \in \LLs^2(\Om).
	\]
	Because $\Ab_i \in \Wb_0$, the functional $\curl{\curl{\Ab_i}} \in \HHs^{-1}(\Om)$, where $\HHs^{-1}(\Om)$ is the dual space of $\HHs^1_0(\Om)$. Then, the equation \eqref{eq:TD_A} implies that
	\[
	\inprod{\curl{\curl{\Ab_i}} - \pb_i, \vphib} = 0 \qqqq \fa \vphib \in \Wb_0 \cap \HHs^1_0(\Om),
	\]
	where the duality pairing is between $\HHs^{-1}(\Om)$ and $\HHs^1_0(\Om)$.
	Hence, according to \cite[Lemma~2.1 on p.~22]{Girault1986}, there exists a scalar function $v_i \in \Ls^2(\Om)$ such that 
	\[
	\curl{\curl{\Ab_i}} = \pb_i + \nabla v_i.
	\]
	Now, let $\Bb_i := \curl{\Ab_i}$. The field $\Bb_i \in \LLs^2(\Om)$ satisfies $\dive{\Bb_i} = 0$ and
	\begin{align*}
		- \Delta \Bb_i & = \curl{\curl{\Bb_i}} - \nabla(\dive{\Bb_i}) \\
		& = \curl{\curl{\curl{\Ab_i}}} \\
		& = \curl{\pb_i} + \curl{\nabla v_i} \\
		& = \curl{\pb_i}. 
	\end{align*}
	Since $\curl{\pb_i} \in \HHs^{-1}(\Om)$, we follow \cite[Lemma~3]{DF2020} to get that $\Bb_i \in \HHs^1(\Om^\prime)$ or $\curl{\Ab_i} \in \HHs^1(\Om^\prime)$ for any subset $\Om^\prime \sst \sst \Om$. Next, we adopt the technique in \cite[Theorem~1 on p.~309]{Evans1997} to acquire the estimate of $\curl{\Ab_i}$ in $\HHs^1(\Om^\prime)$. We firstly fix a subdomain $\Om^\prime \sst \sst \Om$, and then choose $\Om^{\star}$ such that $\Om^\prime \sst \sst \Om^{\star} \sst \sst \Om$. Restricting the test function $\vphib \in \brac{\fb \in \CCs^{\infty}_0(\Om^{\star}) : \dive{\fb} = 0} \sst \Wb_0$ in the equation \eqref{eq:TD_A} leads us to that
	\begin{equation}
		\label{eq:useful_5}
		\paren{\curl{\curl{\Ab_i}}, \vphib}_{\Om^\star} = \paren{\pb_i, \vphib}_{\Om^\star}.
	\end{equation}
	In virtue of the density argument in \cite[Theorem~2.8 on p.~30]{Girault1986}, the relation \eqref{eq:useful_5} is still valid for any $\vphib \in \HHs_0(\textup{div}_0, \Om^\star)$, where
	\[
	\HHs_0(\textup{div}_0, \Om^\star) = \brac{\fb \in \LLs^2(\Om^{\star}) :  \dive{\fb} = 0, \ \left.\fb\right|_{\pa\Om^{\star}} \cdot \nv = 0}.
	\]
	Now, let $\gamma \in \Cs^\infty_0(\Om^{\star})$ such that $\gamma = 1$ in $\Om^{\prime}$. 
	Since $\gamma^2 \curl{\Ab_i} \in \HHs^1_0(\Om^{\star})$, we invoke \cite[Remark~2.5 on p.~35]{Girault1986} to get that $\curl{(\gamma^2 \curl{\Ab_i})} \in \HHs_0(\textup{div}_0, \Om^\star)$. Hence, setting $\vphib = \curl{(\gamma^2 \curl{\Ab_i})}$ in \eqref{eq:useful_5} implies that
	\[
	\paren{\curl{\curl{\Ab_i}}, \curl{(\gamma^2 \curl{\Ab_i})}}_{\Om^\star} = \paren{\pb_i, \curl{(\gamma^2 \curl{\Ab_i}})}_{\Om^\star}.
	\]
	Using the Cauchy-Schwarz and $\veps$-Young inequalities together with the following identity
	\[
	\curl{\left(\gamma^2 \curl{\Ab_i}\right)} = \gamma^2 \curl{\curl{\Ab_i}} + 2\gamma \nabla \gamma \times (\curl{\Ab_i}),
	\]
	we arrive at
	\[
	\norm{\gamma \curl{\curl{\Ab_i}}}_{\LLs^2(\Om^\star)} \leqc \norm{\pb_i}_{\LLs^2(\Om)} + \norm{\curl{\Ab_i}}_{\LLs^2(\Om)}.
	\]
	Finally, we use the fact
	\[
	\norm{\nabla \fb}^2_{\LLs^2(\Om)} = \norm{\curl{\fb}}^2_{\LLs^2(\Om)} + \norm{\dive{\fb}}^2_{\Ls^2(\Om)} \qqqq \fa \fb \in \HHs^1_0(\Om),
	\]
	to deduce that
	\begin{align*}
		\norm{\curl{\Ab_i}}_{\HHs^1(\Om^\prime)} 
		& \le \norm{\gamma \curl{\Ab_i}}_{\HHs^1(\Om^\star)} \\
		& \leqc \norm{\curl{(\gamma \curl{\Ab_i})}}_{\LLs^2(\Om^\star)} + \norm{\dive{(\gamma \curl{\Ab_i})}}_{\Ls^2(\Om^\star)} \\
		& \le \norm{\gamma \curl{\curl{\Ab_i}}}_{\LLs^2(\Om^\star)} + \norm{\nabla \gamma \times (\curl{\Ab_i})}_{\LLs^2(\Om^\star)} + \norm{\nabla\gamma \cdot (\curl{\Ab_i})}_{\Ls^2(\Om^\star)} \\
		& \leqc \norm{\pb_i}_{\LLs^2(\Om)} + \norm{\curl{\Ab_i}}_{\LLs^2(\Om)} \\
		& \leqc \norm{\delta \Ab_i}_{\LLs^2(\Theta_i)} + \norm{\curl{\Ab_i}}_{\LLs^2(\Om)} + \norm{\nabla \phi_i}_{\LLs^2(\Pi)},
	\end{align*}
	which allows us to accomplish the proof.
\end{proof}

%--------------------------------------------------------------------------
The next lemma provides a priori estimate for the discrete solutions $u_i$.
\begin{lemma}[A priori estimate for $u_i$]
	\label{lem:est_u}
	Let the assumptions \ref{as:as1}-\ref{as:as5} be fulfilled. Then, there exist positive constants $C$ and $\tau_0$ such that for any $\tau < \tau_0$, the following relation holds true
	\begin{equation}
		\label{eq:est_u}
		\max\limits_{1 \leq l \leq n} \norm{u_l}^2_{\Ls^2(\Om)} + \sum\limits_{i=1}^n \norm{\nabla u_i}^2_{\LLs^2(\Om)}\tau + \sum\limits_{i=1}^n \norm{u_i - u_{i-1}}^2_{\Ls^2(\Om)} {+ \sum\limits_{i=1}^n \norm{\alpha_i \delta u_i}^2_{[\Hs^1(\Om)]^\prime} \tau} \le C.
	\end{equation}
\end{lemma}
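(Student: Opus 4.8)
The plan is to test \eqref{eq:TD_u} with $w = u_i$, multiply by $\tau$, and sum over $i = 1, \ldots, l$ for an arbitrary $l \le n$, exactly as in the standard energy estimate for parabolic problems by Rothe's method. I would treat the four resulting terms separately.

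First, for the time-derivative term $\sum_{i=1}^l \tau (\alpha_i \delta u_i, u_i)_\Om$, I would use the discrete analogue of the Reynolds-transport identity from \Cref{lem:extension}. Writing $(\alpha_i \delta u_i, u_i)_\Om$ as a sum of integrals over $\Sm_i$, $\Xi_i$, $\Pi$ and applying Abel summation / the elementary inequality $2a(a-b) \ge a^2 - b^2$ on each static \emph{and} moving piece, one gets a telescoping lower bound of the form
\[
\sum_{i=1}^l \tau (\alpha_i \delta u_i, u_i)_\Om \ge \tfrac12 \norm{\sqrt{\alpha_l}\,u_l}^2_{\Ls^2(\Om)} - \tfrac12 \norm{\sqrt{\alpha_0}\,\tilde u_0}^2_{\Ls^2(\Om)} + \tfrac12 \sum_{i=1}^l \norm{\sqrt{\alpha_i}(u_i - u_{i-1})}^2_{\Ls^2(\Om)} - (\text{transport remainder}),
\]
where the transport remainder comes from comparing $\int_{\Sm_i} u_{i-1}^2$ with $\int_{\Sm_{i-1}} u_{i-1}^2$ (and likewise for $\Xi$), and by \eqref{eq:Reynolds2} it is bounded by $C \sum_{i=1}^l \tau \norm{u_{i-1}}_{\Hs^1(\Om)}^2$ with a further $\veps$-Young split so that the gradient part is absorbable. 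The convection term $\sum_{i=1}^l \tau (\alpha_i \vb_i \cdot \nabla u_i, u_i)_\Om$ is handled by \eqref{eq:Necas_like}: it is bounded by $\veps \sum_{i=1}^l \tau \norm{\nabla u_i}^2_{\LLs^2(\Om)} + C_\veps \sum_{i=1}^l \tau \norm{u_i}^2_{\Ls^2(\Om)}$. The diffusion term $\sum_{i=1}^l \tau (\kappa_i \nabla u_i, \nabla u_i)_\Om$ is bounded below by $\min_j \kappa_j \sum_{i=1}^l \tau \norm{\nabla u_i}^2_{\LLs^2(\Om)}$. Finally the source term satisfies $|\sum_{i=1}^l \tau (\RR_r(Q_i), u_i)_{\Theta_i}| \le r |\Om| T^{1/2} (\sum_{i=1}^l \tau \norm{u_i}^2_{\Ls^2(\Om)})^{1/2}$ using $0 \le \RR_r(Q_i) \le r$, which is again absorbable by $\veps$-Young.

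Collecting everything, choosing $\veps$ small enough to absorb the gradient contributions into the diffusion term, I arrive at
\[
\norm{\sqrt{\alpha_l}\,u_l}^2_{\Ls^2(\Om)} + \sum_{i=1}^l \tau \norm{\nabla u_i}^2_{\LLs^2(\Om)} + \sum_{i=1}^l \norm{u_i - u_{i-1}}^2_{\Ls^2(\Om)} \le C + C \sum_{i=1}^l \tau \norm{u_{i-1}}^2_{\Ls^2(\Om)},
\]
and the discrete Gr\"onwall lemma gives the first three terms of \eqref{eq:est_u} (using that $\alpha$ is bounded above and below by positive constants to pass between $\norm{u_l}$ and $\norm{\sqrt{\alpha_l}u_l}$). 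For the last term $\sum_{i=1}^n \tau \norm{\alpha_i \delta u_i}^2_{[\Hs^1(\Om)]^\prime}$, I would go back to \eqref{eq:TD_u}, isolate $(\alpha_i \delta u_i, w)_\Om$, and estimate the dual norm by taking the supremum over $\norm{w}_{\Hs^1(\Om)} \le 1$: the convection term gives $\leqc \norm{\nabla u_i}_{\LLs^2(\Om)}$ (after integrating by parts or directly bounding $\norm{\vb_i}_\infty \norm{\nabla u_i}$), the diffusion term gives $\leqc \norm{\nabla u_i}_{\LLs^2(\Om)}$, and the source gives $\leqc r|\Om|^{1/2}$; squaring, multiplying by $\tau$, and summing uses the already-established bound on $\sum_{i=1}^n \tau \norm{\nabla u_i}^2_{\LLs^2(\Om)}$.

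The main obstacle is the time-derivative term: one must reproduce, at the discrete level and \emph{component by component}, the Reynolds-transport computation that underlies \Cref{lem:extension}, making sure that the moving-domain remainder $\sum_i \bigl(\int_{\Sm_i} u_{i-1}^2 - \int_{\Sm_{i-1}} u_{i-1}^2\bigr)$ is correctly written via \eqref{eq:Reynolds2} as $\sum_i \int_{t_{i-1}}^{t_i}\!\int_{\Sm(\eta)} \dive(u_{i-1}^2 \vb(\eta))\,\dx\,\di\eta$ and then bounded, uniformly in $\tau$, by $C\sum_i \tau(\veps\norm{\nabla u_{i-1}}^2 + C_\veps \norm{u_{i-1}}^2)$; the $\veps$-part must be small enough to be absorbed after the shift of index in the Gr\"onwall sum. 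Everything else is routine $\veps$-Young bookkeeping together with $0 \le \RR_r(Q_i) \le r$, which conveniently makes the source term harmless regardless of the (not-yet-controlled) size of $Q_i$.
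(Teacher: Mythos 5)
Your proposal is correct and follows essentially the same route as the paper: testing \eqref{eq:TD_u} with $w = u_i\tau$, Abel summation for the $\alpha_i\delta u_i$ term with the moving-domain remainder $\sum_i\bigl((\alpha_i u_{i-1},u_{i-1})_{\Sm_i}-(\alpha_{i-1}u_{i-1},u_{i-1})_{\Sm_{i-1}}\bigr)$ controlled by the Reynolds transport theorem and \eqref{eq:Necas_like}-type $\veps$-Young bounds (the paper uses the boundary form \eqref{eq:Reynolds1} plus \eqref{eq:Necas_like}, you use the equivalent divergence form \eqref{eq:Reynolds2}), then coercivity of the diffusion term, $0\le\RR_r(Q_i)\le r$ for the source, a discrete Gr\"onwall argument for $\tau<\tau_0$, and finally the dual-norm bound for $\sum_i\norm{\alpha_i\delta u_i}^2_{[\Hs^1(\Om)]^\prime}\tau$ read off from the equation itself using the already established gradient bound. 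No gaps of substance.
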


\begin{proof}
	We set $w = u_i \tau$ in the equation \eqref{eq:TD_u} and sum the result up to $1 \le l \le n$ to get that
	\begin{multline}
		\label{eq:est_u_testing}
		\sum\limits_{i=1}^l \paren{\alpha_i (u_i - u_{i-1}), u_i}_{\Om} + \sum\limits_{i=1}^l \paren{\alpha_i \vb_i \cdot \nabla u_i, u_i}_{\Om} \tau\\
		 + \sum\limits_{i=1}^l \paren{\kappa_i \nabla u_i, \nabla u_i}_{\Omega} \tau = \sum\limits_{i=1}^l \paren{\RR_r(Q_i), u_i}_{\Theta_i} \tau.
	\end{multline}
	Firstly, we rearrange the first term on the LHS as follows
	\begin{multline}
		\label{eq:Abel}
			2 \sum\limits_{i=1}^l \paren{\alpha_i (u_i - u_{i-1}), u_i}_{\Om}  = \sum\limits_{i=1}^l \paren{\alpha_i u_i, u_i}_\Om - \sum\limits_{i=1}^l \paren{\alpha_{i-1} u_{i-1}, u_{i-1}}_\Om \\
			+ \sum\limits_{i=1}^l \paren{\alpha_i (u_i - u_{i-1}), u_i - u_{i-1}}_\Om - \sum\limits_{i=1}^l \paren{\paren{\alpha_i - \alpha_{i-1}} u_{i-1}, u_{i-1}}_\Om.
	\end{multline}
	The last term on the RHS of \eqref{eq:Abel} can be split over the subdomains in the following way
	\begin{align*}
		\sum\limits_{i=1}^l \paren{\paren{\alpha_i - \alpha_{i-1}} u_{i-1}, u_{i-1}}_\Om
		& = \sum\limits_{i=1}^l \paren{\alpha_i u_{i-1}, u_{i-1}}_\Om - \sum\limits_{i=1}^l \paren{\alpha_{i-1} u_{i-1}, u_{i-1}}_\Om \\
		& = \sum\limits_{i=1}^l \paren{\alpha_i u_{i-1}, u_{i-1}}_{\Sigma_i \cup \Xi_i \cup \Pi} - \sum\limits_{i=1}^l \paren{\alpha_{i-1} u_{i-1}, u_{i-1}}_{\Sigma_{i-1} \cup \Xi_{i-1} \cup \Pi}.
	\end{align*}
	Then, the RTT can be used to estimate the integrals over the workpiece as
	\begin{align*}
		\abs{\paren{\alpha_i u_{i-1}, u_{i-1}}_{\Sm_i} - \paren{\alpha_{i-1} u_{i-1}, u_{i-1}}_{\Sm_{i-1}}}
		& = \alpha_{\Sm} \abs{\, \int\limits_{t_{i-1}}^{t_i} \dfrac{\mbox{d}}{\dt} \int\limits_{\Sm(t)} u_{i-1}^2 \dx \dt} \\
		& \stackrel{\eqref{eq:Reynolds1}}{=} \alpha_{\Sm} \abs{\,\int\limits_{t_{i-1}}^{t_i} \int\limits_{\pa\Sm(t)} u_{i-1}^2 (\vb \cdot \nv)(t) \ds \dt } \\
		& \stackrel{\eqref{eq:Necas_like}}{\le} \veps \norm{\nabla u_{i-1}}^2_{\LLs^2(\Om)} \tau + C_{\veps} \norm{u_{i-1}}^2_{\Ls^2(\Om)} \tau.
	\end{align*}
	A similar estimate can be deduced for the integrals over the air subdomains $\Xi_i$ and $\Xi_{i-1}$, while the corresponding terms vanish on the coil $\Pi$. Hence, we are able to obtain from \eqref{eq:Abel} that (see also \cite[Lemma~2.3]{Slodicka2021})
	\begin{multline*}
	\sum\limits_{i=1}^l \paren{\alpha_i (u_i - u_{i-1}), u_i}_{\Om} \geqc \norm{u_l}^2_{\Ls^2(\Om)} - C \norm{\tilde{u}_0}^2_{\Hs^1(\Om)} \\
	+ \sum\limits_{i=1}^l \norm{u_i - u_{i-1}}^2_{\Ls^2(\Om)} - \veps \sum\limits_{i=1}^{l-1} \norm{\nabla u_{i}}^2_{\LLs^2(\Om)} \tau -  C_{\veps} \sum\limits_{i=1}^{l-1} \norm{u_{i}}^2_{\Ls^2(\Om)} \tau.
	\end{multline*}
	Next, the third term on the LHS of \eqref{eq:est_u_testing} can be bounded by
	\[
	\sum\limits_{i=1}^l \paren{\kappa_i \nabla u_i, \nabla u_i}_{\Om} \tau \geq \min\{\kappa_{\Sm}, \kappa_{\Pi}, \kappa_{\Xi}\}\sum\limits_{i=1}^l \norm{\nabla u_i}^2_{\LLs^2(\Om)} \tau.
	\]
	The Cauchy-Schwarz and $\veps$-Young inequalities can be used to handle the remaining terms of \eqref{eq:est_u_testing} as follows
	\begin{align*}
		& \abs{ \sum\limits_{i=1}^l \paren{\alpha_i \vb_i \cdot \nabla u_i, u_i}_{\Om} } \tau \le \veps \sum\limits_{i=1}^l \norm{\nabla u_i}^2_{\LLs^2(\Om)} \tau + C_{\veps} \sum\limits_{i=1}^l \norm{u_i}^2_{\Ls^2(\Om)} \tau, \\
		& \abs{ \sum\limits_{i=1}^l \paren{\RR_r(Q_i), u_i}_{\Theta_i} } \tau \leqc \sum\limits_{i=1}^l \norm{u_i}^2_{\Ls^2(\Om)} \tau + r^2 \sum\limits_{i=1}^l \tau
		\leqc \sum\limits_{i=1}^l \norm{u_i}^2_{\Ls^2(\Om)}\tau + 1.
	\end{align*}
	Collecting all estimates above, we arrive at 
	\[
	\norm{u_l}^2_{\Ls^2(\Om)}  + \sum\limits_{i=1}^l \norm{u_i - u_{i-1}}^2_{\Ls^2(\Om)}  + (1- \veps) \sum\limits_{i=1}^{l} \norm{\nabla u_{i}}^2_{\LLs^2(\Om)} \tau 
	\leqc 1 + C_{\veps} \sum\limits_{i=1}^{l} \norm{u_{i}}^2_{\Ls^2(\Om)} \tau.
	\]
	{Fixing a sufficiently small $\veps > 0$ and applying the Gr\"{o}nwall argument, then taking the maximum of two sides over $1 \le l \le n$ leads us to that
		\begin{equation}
			\label{eq:est_u_1}
			\max\limits_{1 \leq l \leq n} \norm{u_l}^2_{\Ls^2(\Om)} + \sum\limits_{i=1}^n \norm{\nabla u_i}^2_{\LLs^2(\Om)}\tau + \sum\limits_{i=1}^n \norm{u_i - u_{i-1}}^2_{\Ls^2(\Om)} \leqc 1. 
		\end{equation}
		Finally, using the inequality \eqref{eq:est_u_1} and the definition
		\[
		\norm{\alpha_i \delta u_i}_{[\Hs^1(\Om)]^\prime} = \sup_{w \in \Hs^1(\Om), \, w \neq 0} \dfrac{\paren{\alpha_i \delta u_i, w}_\Om}{\norm{w}_{\Hs^1(\Om)}},
		\]
		we can easily show that
		\[
		\sum\limits_{i=1}^n \norm{\alpha_i \delta u_i}^2_{[\Hs^1(\Om)]^\prime} \tau \leqc 1,
		\]
		which concludes the proof.}
\end{proof}

%--------------------------------------------------------------------------
\section{Existence of a solution}
\label{sec:existence}

This section is the main part of the paper, concerning the existence of a solution to the variational system as well as the convergence of the proposed numerical scheme. Firstly, we introduce some piecewise-constant- and piecewise-affine-in-time functions and subdomains
\begin{align*}
	& \ovl{j}_n(t) = j_i, && \ovl{\vb}_n(t) = \vb_i, && \\
	& \ovl{\sm}_n(t) = \sm_i, && \ovl{\kappa}_n(t) = \kappa_i, && \ovl{\alpha}_n(t) = \alpha_i, \\
	& \widetilde{\Sm}_n(t) = \Sm_i, && \widetilde{\Theta}_n(t) = \Theta_i, && \widetilde{\Xi}_n(t) = \Xi_i, \\
	& \ovl{\phi}_n(t) = \phi_i, && \ovl{\Ab}_n(t) = \Ab_i, && \Ab_n(t) = \Ab_{i-1} + \paren{t - t_{i-1}} \delta \Ab_i, \\
	& \ovl{u}_n(t) = u_i, && \unl{u}_n(t) = u_{i-1}, &&  u_n(t) = u_{i-1} + (t - t_{i-1}) \delta u_i,
\end{align*}
for all $t \in (t_{i-1}, t_i]$, with $i = 1, 2, \ldots, n$. The value of the continuous functions $\Ab_n$ and $u_n$ at time $t = 0$ are given by
\[
\Ab_n(0) = \tilde{\Ab}_0, \qqqqqq u_n(0) = \tilde{u}_0.
\]
In addition, the following piecewise-constant Joule heat source is defined for all $t \in (0, T]$ 
\[
\ovl{Q}_n(t) = \ovl{\sm}_n(t) \abs{\pa_t\Ab_n(t) + \chi_{\Pi} \nabla \ovl{\phi}_n(t) - \ovl{\vb}_n(t) \times \left(\curl{\ovl{\Ab}_n}(t)\right)}^2.
\]
Now, we can rewrite the time-discrete equations \eqref{eq:TD_phi}-\eqref{eq:TD_u} as follows
\begin{equation}
 \sm_{\Pi} \paren{\nabla\ovl{\phi}_n(t), \nabla\psi}_{\Pi} + \inprod{\ovl{j}_n(t), \psi}_{1/2, \Gm} = 0, \label{eq:Rothe_phi} 
\end{equation}
\begin{multline}
 \paren{\ovl{\sm}_n(t)\pa_t \Ab_n(t), \vphib}_{\widetilde{\Theta}_n(t)} + \mu_0^{-1} \paren{\curl{\ovl{\Ab}_n(t)}, \curl{\vphib}}_\Om \\
  + \sm_{\Pi} \paren{\nabla\ovl{\phi}_n(t), \vphib}_{\Pi}- \sm_{\Sm} \paren{\ovl{\vb}_n(t)\times \left(\curl{\ovl{\Ab}_n}(t)\right), \vphib}_{\widetilde{\Sm}_n(t)} = 0, \label{eq:Rothe_A} 
\end{multline}
\begin{multline}
	 \paren{\ovl{\alpha}_n(t) \pa_t u_n(t), w}_{\Om} + \paren{\ovl{\alpha}_n(t) \ovl{\vb}_n(t) \cdot \nabla \ovl{u}_n(t), w}_{\Om} \\
	 + \paren{\ovl{\kappa}_n(t) \nabla \ovl{u}_n(t), \nabla w}_{\Omega} = \paren{\RR_r\left(\ovl{Q}_n(t)\right), w}_{\widetilde{\Theta}_n(t)}, \label{eq:Rothe_u} 
\end{multline}
which are valid for all $\psi \in \Zs, \vphib \in \Wb_0$ and $w \in \Hs^1(\Om)$, and for all $t \in (0, T]$. The following lemma shows the convergence of the piecewise-constant approximation of the given data.
\begin{lemma}[Convergence]
	\label{lem:convergence}
	Let the assumptions \ref{as:as1}-\ref{as:as5} be satisfied. Then, there exists a constant $C > 0$ such that the following relations hold true for any $t \in (0, T]$
	\begin{align*}
		(i) \q & \norm{\ovl{j}_n(t) - j(t)}_{\Hs^{-1/2}(\Gm)} \le C \tau, \\
		& \norm{\ovl{\vb}_n(t) - \vb(t)}_{\CCs(\ovl{\Omega})} \le C \tau, \\
		(ii) \q & \lim\limits_{n \to \infty} \norm{\ovl{\kappa}_n(t) - \kappa(t)}_{\Ls^2(\Om)} = 0,\\
		& \lim\limits_{n \to \infty} \norm{\ovl{\sm}_n(t) - \sm(t)}_{\Ls^2(\Om)} = 0,\\
		& \lim\limits_{n \to \infty} \norm{\ovl{\alpha}_n(t) - \alpha(t)}_{\Ls^2(\Om)} = 0, \\
		& \lim\limits_{n \to \infty} \norm{\chi_{\widetilde{\Sm}_n(t)} - \chi_{\Sm(t)}}_{\Ls^2(\Om)} = 0.
	\end{align*}
\end{lemma}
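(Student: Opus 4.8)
The plan is to establish (i) by a direct Lipschitz‑in‑time estimate and (ii) by reducing all four limits to the single geometric fact $\smabs{\Sm(s)\triangle\Sm(t)}\to 0$ as $s\to t$. For (i): by \ref{as:as5}, $j\in\Lips([0,T],\Hs^{-1/2}(\Gm))$, so for $t\in(t_{i-1},t_i]$ we have $\smnorm{\ovl j_n(t)-j(t)}_{\Hs^{-1/2}(\Gm)}=\smnorm{j(t_i)-j(t)}_{\Hs^{-1/2}(\Gm)}\leqc\smabs{t_i-t}\le\tau$; and by \ref{as:as3} the map $t\mapsto\vb(\cdot,t)$ is Lipschitz from $[0,T]$ into $\CCs(\ovl\Om)$ with constant $\smnorm{\pa_t\vb}_{\CCs(\ovl\Om\times[0,T])}$, so likewise $\smnorm{\ovl\vb_n(t)-\vb(t)}_{\CCs(\ovl\Om)}=\smnorm{\vb(t_i)-\vb(t)}_{\CCs(\ovl\Om)}\leqc\tau$. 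This proves (i).

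For (ii), the first step is an algebraic reduction. Since $\Pi$ is fixed and $\Om=\Sm(t)\cup\Pi\cup\Xi(t)$ up to a null set, $\chi_{\Sm(t)}+\chi_\Pi+\chi_{\Xi(t)}=1=\chi_{\Sm_i}+\chi_\Pi+\chi_{\Xi_i}$ a.e., whence $\chi_{\widetilde\Xi_n(t)}-\chi_{\Xi(t)}=-(\chi_{\widetilde\Sm_n(t)}-\chi_{\Sm(t)})$ a.e. Writing out $\kappa,\sm,\alpha$ in terms of their piecewise‑constant values (recall $\sm_\Xi=0$ and all coefficients are fixed on $\Pi$) gives the a.e.\ identity
\[
\ovl\kappa_n(t)-\kappa(t)=(\kappa_\Sm-\kappa_\Xi)\paren{\chi_{\widetilde\Sm_n(t)}-\chi_{\Sm(t)}}\qquad\text{in }\Om,
\]
and analogously $\ovl\sm_n(t)-\sm(t)=\sm_\Sm(\chi_{\widetilde\Sm_n(t)}-\chi_{\Sm(t)})$ and $\ovl\alpha_n(t)-\alpha(t)=(\alpha_\Sm-\alpha_\Xi)(\chi_{\widetilde\Sm_n(t)}-\chi_{\Sm(t)})$. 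Using $\smabs{\chi_A-\chi_B}^2=\smabs{\chi_A-\chi_B}$, each of the four quantities in (ii) equals a fixed constant times $\smnorm{\chi_{\widetilde\Sm_n(t)}-\chi_{\Sm(t)}}_{\Ls^2(\Om)}^2=\smabs{\Sm_i\triangle\Sm(t)}$ for $t\in(t_{i-1},t_i]$, so it suffices to show $\smabs{\Sm(s)\triangle\Sm(t)}\to 0$ as $s\to t$.

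For this I would argue by dominated convergence. The moving workpiece is carried by the $\CCs^1$ flow $\bm{\Psi}$ of $\vb$, a $\CCs^1$‑diffeomorphism of $\ovl\Om$ onto itself (well‑defined because $\vb\cdot\nv=0$ on $\pa\Om$ by \ref{as:as3}) which agrees with $\Phib$ on $\Sm(0)$; thus $\Sm(s)=\bm{\Psi}_{t\to s}(\Sm(t))$ with $\bm{\Psi}_{t\to s}\to\mathrm{id}$ in $\CCs(\ovl\Om)$ as $s\to t$. Fix $\xb\in\Om\setminus\pa\Sm(t)$ and a sequence $s_k\to t$: if $\xb\in\Sm(t)$ (open), then $\bm{\Psi}_{s_k\to t}(\xb)\to\xb\in\Sm(t)$, hence $\bm{\Psi}_{s_k\to t}(\xb)\in\Sm(t)$, i.e.\ $\xb\in\Sm(s_k)$, for all large $k$; if $\xb\notin\ovl{\Sm(t)}$, the same argument yields $\xb\notin\Sm(s_k)$ for large $k$. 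Therefore $\chi_{\Sm(s_k)}(\xb)\to\chi_{\Sm(t)}(\xb)$ for every $\xb$ off the Lebesgue‑null hypersurface $\pa\Sm(t)$, so $\chi_{\Sm(s_k)}\to\chi_{\Sm(t)}$ a.e.\ on $\Om$; as $\Om$ is bounded and these functions are dominated by $1\in\Ls^1(\Om)$, Lebesgue's dominated convergence theorem gives $\chi_{\Sm(s_k)}\to\chi_{\Sm(t)}$ in $\Ls^1(\Om)$, i.e.\ $\smabs{\Sm(s_k)\triangle\Sm(t)}\to 0$. Since $(s_k)$ was arbitrary, $\smabs{\Sm(s)\triangle\Sm(t)}\to 0$ as $s\to t$, and taking $s=t_i$ with $\smabs{t_i-t}\le\tau=T/n\to 0$ finishes (ii).

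The routine pieces are (i) and the algebraic reduction of (ii) to $\smabs{\Sm_i\triangle\Sm(t)}$. The one point that needs care is the geometric convergence $\smabs{\Sm(s)\triangle\Sm(t)}\to 0$: since $\Phib(\cdot,t)^{-1}$ is defined only on $\Sm(t)$, the comparison of $\Sm(s)$ and $\Sm(t)$ must be realised through a map defined on all of $\ovl\Om$ — the flow of $\vb$ used above, or, equivalently, the observation that $\TT=\{(\xb,s):\xb\in\Sm(s)\}$ and $\{(\xb,s):\xb\in\ovl{\Sm(s)}\}$ are respectively relatively open and closed in $\Om\times[0,T]$ (the map $(\zb,s)\mapsto(\Phib(\zb,s),s)$ being a $\CCs^1$ immersion with positive Jacobian $\det\nabla_{\zb}\Phib$), which again forces $\chi_{\Sm(s)}(\xb)\to\chi_{\Sm(t)}(\xb)$ pointwise off $\pa\Sm(t)$.
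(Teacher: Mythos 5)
Your proposal is correct and follows essentially the same route as the paper: part (i) via the Lipschitz continuity in time of $j$ and $\vb$ from \ref{as:as5} and \ref{as:as3}, and part (ii) by reducing each norm to a constant multiple of the measure of the symmetric difference $\abs{\widetilde{\Sm}_n(t)\,\triangle\,\Sm(t)}$, which tends to zero by the smoothness of the motion. The only difference is that you spell out in detail (through the flow of $\vb$, pointwise convergence of characteristic functions off the null set $\pa\Sm(t)$, and dominated convergence) the geometric limit that the paper attributes in one line to the properties of the mapping $\Phib$ and \ref{as:as3}; this is an amplification of the same argument rather than a different one.
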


\begin{proof}
	(i) \q For any $t \in (0, T]$, the Lipschitz continuity in time of the functions $j$ and $\vb$ gives us that
	\begin{align*}
		& \norm{\ovl{j}_n(t) - j(t)}_{\Hs^{-1/2}(\Gm)} \leqc \tau, \\
		& \norm{\ovl{\vb}_n(t) - \vb(t)}_{\CCs(\ovl{\Omega})} \leqc {\tau}.
	\end{align*}
	(ii)\q Thanks to the property of the mapping $\Phib$, it holds for any $t \in (0, T]$ that
	\begin{align*}
		\lim\limits_{n \to \infty} \norm{\ovl{\kappa}_n(t) - \kappa(t)}^2_{\Ls^2(\Om)} 
		& = \lim\limits_{n \to \infty} \norm{\ovl{\kappa}_n(t) - \kappa(t)}^2_{\Ls^2(\Sm(t))} + \lim\limits_{n \to \infty} \norm{\ovl{\kappa}_n(t) - \kappa(t)}^2_{\Ls^2(\Xi(t))} \\
		& = (\kappa_{\Xi} - \kappa_{\Sm})^2 \lim\limits_{n \to \infty} \paren{ \abs{\widetilde{\Sm}_n(t) \cup \Sm(t)} - \abs{\widetilde{\Sm}_n(t) \cap \Sm(t)}} \stackrel{\ref{as:as3}}{=} 0.
	\end{align*}
	The remaining limit transitions can be obtained by the same reasoning, which completes the proof.
\end{proof}

In the next two theorems, we prove the convergence of Rothe's functions to the solution of the variational system \eqref{eq:vf_phi}-\eqref{eq:vf_u}.

\begin{theorem}[Existence of $\phi$ and $\Ab$]\label{thm:existence_phi_A}%
	Let the assumptions \ref{as:as1}-\ref{as:as5} be fulfilled. Then, there exists a unique solution $(\phi, \Ab)$ to the variational problems \eqref{eq:vf_phi}-\eqref{eq:vf_A}, which satisfies $\phi \in \Lips([0, T], \Zs), \Ab \in \Cs([0, T], \Wb_0)$ with $\pa_t \Ab \in \Ls^2((0,T), \Wb_0)$ and $\Ab(0) = \tilde{\Ab}_0$ a.e. in $\Theta(0)$. Moreover, the following convergences hold true
	\begin{align}
		& \ovl{\phi}_n \to \phi && \text{in} \q \Ls^2((0, T), \Zs), \label{thm:existence_phi_A:eq_conv1} \\
		& \ovl{\Ab}_n \to \Ab, \q \Ab_n \to \Ab && \text{in} \q \Ls^2((0, T), \Wb_0), \label{thm:existence_phi_A:eq_conv3} \\
		& \ovl{\sm}_n \pa_t \Ab_n \sra \sm \pa_t \Ab && \text{in} \q \Ls^2((0, T), \LLs^2(\Om)), \label{thm:existence_phi_A:eq_conv4} \\
		& \sqrt{\ovl{\sm}_n} \pa_t \Ab_n \to \sqrt{\sm} \pa_t \Ab && \text{in} \q \Ls^2((0, T), \LLs^2(\Om)).\label{thm:existence_phi_A:eq_conv5}
	\end{align}
\end{theorem}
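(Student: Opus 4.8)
The plan is to carry out Rothe's method, exploiting that the pair $(\phi,\Ab)$ decouples from the temperature — the Joule term $\RR_r(Q)$ enters only \eqref{eq:vf_u} — so that \eqref{eq:vf_phi}--\eqref{eq:vf_A} is exactly the eddy-current problem with moving non-magnetic conductor of \cite{LSV2021b}, whose argument I would follow. The scalar potential is the easiest piece: for each fixed $t$ the bilinear form $\sm_\Pi(\nabla\cdot,\nabla\cdot)_\Pi$ is $\Zs$-elliptic, so by Lax--Milgram \eqref{eq:vf_phi} has a unique solution $\phi(t)\in\Zs$ depending linearly and boundedly on $j(t)\in\Hs^{-1/2}(\Gm)$; with \ref{as:as5} this gives $\phi\in\Lips([0,T],\Zs)$, and $\norm{\ovl{\phi}_n(t)-\phi(t)}_{\Zs}\leqc\norm{\ovl{j}_n(t)-j(t)}_{\Hs^{-1/2}(\Gm)}\leqc\tau$ by \Cref{lem:convergence}, hence $\ovl{\phi}_n\to\phi$ in $\Cs([0,T],\Zs)$ and in particular \eqref{thm:existence_phi_A:eq_conv1}.

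\textbf{Compactness and the weak limit for $\Ab$.}
By \Cref{lem:est_A}, $\ovl{\Ab}_n$ and $\Ab_n$ are bounded in $\Ls^\infty((0,T),\Wb_0)$, $\pa_t\Ab_n$ is bounded in $\Ls^2((0,T),\Wb_0)$, and $\norm{\ovl{\Ab}_n-\Ab_n}^2_{\Ls^2((0,T),\Wb_0)}\leqc\tau^2\sum_{i}\norm{\curl\delta\Ab_i}^2_{\LLs^2(\Om)}\tau\to0$. Along a subsequence, $\ovl{\Ab}_n,\Ab_n\wconv\Ab$ and $\pa_t\Ab_n\wconv\pa_t\Ab$ in $\Ls^2((0,T),\Wb_0)$, so $\Ab\in\Ws^{1,2}((0,T),\Wb_0)\emb\Cs([0,T],\Wb_0)$; since $\Wb_0\emb\HHs^1(\Om)\cpemb\LLs^2(\Om)$, an Aubin--Lions--Simon argument gives $\Ab_n\to\Ab$ in $\Cs([0,T],\LLs^2(\Om))$, whence $\ovl{\Ab}_n\to\Ab$ in $\Ls^2((0,T),\LLs^2(\Om))$ and, from $\Ab_n(0)=\tilde{\Ab}_0$, $\Ab(0)=\tilde{\Ab}_0$ in $\LLs^2(\Om)$. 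I then multiply \eqref{eq:Rothe_phi}--\eqref{eq:Rothe_A} by an arbitrary $\eta\in\Cs^\infty_0((0,T))$ and integrate. Because $\sm=0$ off $\Theta(t)$, the first term equals $(\ovl{\sm}_n\pa_t\Ab_n,\vphib)_\Om$, and since $\ovl{\sm}_n\to\sm$ in $\Ls^2((0,T),\Ls^2(\Om))$ (dominated convergence, \Cref{lem:convergence}) while $\ovl{\sm}_n$ is uniformly bounded and $\pa_t\Ab_n\wconv\pa_t\Ab$ in $\Ls^2((0,T),\LLs^2(\Om))$, one has $(\ovl{\sm}_n-\sm)\pa_t\Ab_n\to0$ in $\Ls^1$ and $\sm\pa_t\Ab_n\wconv\sm\pa_t\Ab$ in $\Ls^2$; this passes the term to the limit and simultaneously proves \eqref{thm:existence_phi_A:eq_conv4}. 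The $\curl$ term passes by weak convergence of $\curl\ovl{\Ab}_n$, the $\nabla\phi$ term by strong convergence of $\ovl{\phi}_n$, and the convective term because $\chi_{\widetilde{\Sm}_n}\ovl{\vb}_n\to\chi_{\Sm}\vb$ strongly in $\Ls^2((0,T),\LLs^2(\Om))$ (\Cref{lem:convergence}) paired against $\curl\ovl{\Ab}_n\wconv\curl\Ab$. This yields \eqref{eq:vf_phi}--\eqref{eq:vf_A}; the uniqueness from \cite[Theorem~3.1]{LSV2021a} (cf.\ \Cref{thm:uniqueness}) then promotes the subsequential limits to convergence of the whole sequences.

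\textbf{Strong convergences.}
To upgrade to \eqref{thm:existence_phi_A:eq_conv3} and to obtain \eqref{thm:existence_phi_A:eq_conv5}, I would derive two energy identities: test \eqref{eq:TD_A} with $\Ab_i\tau$, resp.\ $\delta\Ab_i\tau$, and sum over $i$; test \eqref{eq:vf_A} with $\Ab(t)$, resp.\ $\pa_t\Ab(t)$, and integrate over $(0,T)$. On the continuous side the chain rule $\int_0^T(\curl\Ab,\curl\pa_t\Ab)_\Om\dt=\frac{1}{2}\norm{\curl\Ab(T)}^2_{\LLs^2(\Om)}-\frac{1}{2}\norm{\curl\tilde{\Ab}_0}^2_{\LLs^2(\Om)}$, valid since $\Ab\in\Ws^{1,2}((0,T),\Wb_0)$ and $\Ab(0)=\tilde{\Ab}_0$, replaces the summation by parts, whose discrete counterpart leaves the extra term $\frac{1}{2}\sum_{i}\norm{\curl(\Ab_i-\Ab_{i-1})}^2_{\LLs^2(\Om)}=O(\tau)$ by \eqref{eq:est_A}. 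Every mixed term in these identities was already shown to converge — the convective one now also using the strong convergence $\curl\ovl{\Ab}_n\to\curl\Ab$ just obtained — so matching the two identities forces $\norm{\curl\ovl{\Ab}_n}_{\Ls^2((0,T),\LLs^2(\Om))}\to\norm{\curl\Ab}_{\Ls^2((0,T),\LLs^2(\Om))}$ and $\norm{\sqrt{\ovl{\sm}_n}\pa_t\Ab_n}_{\Ls^2((0,T),\LLs^2(\Om))}\to\norm{\sqrt{\sm}\pa_t\Ab}_{\Ls^2((0,T),\LLs^2(\Om))}$; combined with the weak convergences $\curl\ovl{\Ab}_n\wconv\curl\Ab$ and $\sqrt{\ovl{\sm}_n}\pa_t\Ab_n\wconv\sqrt{\sm}\pa_t\Ab$ in the Hilbert space $\Ls^2((0,T),\LLs^2(\Om))$ (the latter proved as for \eqref{thm:existence_phi_A:eq_conv4}), this gives the strong convergences, and $\Ab_n\to\Ab$ in $\Ls^2((0,T),\Wb_0)$ follows from $\norm{\ovl{\Ab}_n-\Ab_n}_{\Ls^2((0,T),\Wb_0)}\to0$. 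The one genuinely new point is the endpoint quantity $\norm{\curl\Ab_n(T)}^2_{\LLs^2(\Om)}$ in the second identity: since $\Ab_n\to\Ab$ in $\Cs([0,T],\LLs^2(\Om))$ and $\Ab_n(T)$ is bounded in $\Wb_0$, one has $\Ab_n(T)\wconv\Ab(T)$ in $\Wb_0$, so $\liminf_n\norm{\curl\Ab_n(T)}^2_{\LLs^2(\Om)}\geq\norm{\curl\Ab(T)}^2_{\LLs^2(\Om)}$ by weak lower semicontinuity, which is precisely what is needed to close the corresponding $\limsup$ estimate.

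\textbf{Main obstacle.}
The a priori analysis (\Cref{lem:est_A}) and the weak passage to the limit are routine; the delicate part is the pair of energy identities yielding the \emph{strong} convergences \eqref{thm:existence_phi_A:eq_conv3} and \eqref{thm:existence_phi_A:eq_conv5}, both because of the bookkeeping of the moving subdomains $\widetilde{\Sm}_n(t)$ inside trilinear and coefficient-weighted terms and, above all, because of the time-endpoint value $\norm{\curl\Ab_n(T)}_{\LLs^2(\Om)}$, which forces one first to secure the $\Cs([0,T],\LLs^2(\Om))$-compactness of $\{\Ab_n\}$.
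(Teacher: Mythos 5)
Your argument is correct, but it takes a noticeably more self-contained route than the paper. The paper does not re-prove the eddy-current part: it cites \cite[Theorems~5.1, 6.1, 6.2]{LSV2021b} for existence, for $\phi \in \Lips([0,T],\Zs)$, $\Ab \in \Ls^\infty((0,T),\Wb_0)$, $\Ab(0)=\tilde{\Ab}_0$, and for the convergences \eqref{thm:existence_phi_A:eq_conv1}--\eqref{thm:existence_phi_A:eq_conv4}; its own contribution is only (i) extracting $\pa_t\Ab_{n}\wconv\pa_t\Ab$ in $\Ls^2((0,T),\Wb_0)$ from \Cref{lem:est_A} (identifying the limit via Kacur's lemma and upgrading to the whole sequence by \Cref{thm:uniqueness}), which gives $\Ab\in\Cs([0,T],\Wb_0)$, and (ii) proving \eqref{thm:existence_phi_A:eq_conv5} by testing \eqref{eq:Rothe_A} with $\pa_t\Ab_n(t)$, integrating over $(0,\eta)$, and passing to the limit term by term — in particular $\int_0^\eta(\curl\ovl{\Ab}_n,\curl\pa_t\Ab_n)_\Om\dt$ goes through as a strong (\eqref{thm:existence_phi_A:eq_conv3}) times weak (\eqref{eq:conv_pdtA}) pairing — and then identifying the limit with $\int_0^\eta\norm{\sqrt{\sm}\,\pa_t\Ab}^2_{\LLs^2(\Om)}\dt$ via \eqref{eq:vf_A} tested with $\pa_t\Ab$; weak convergence plus convergence of norms then yields the strong limit. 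You instead rebuild everything: Lax--Milgram for $\phi$, Aubin--Lions/Simon compactness of $\{\Ab_n\}$ in $\Cs([0,T],\LLs^2(\Om))$, weak passage in the time-integrated equations (this re-derives \eqref{thm:existence_phi_A:eq_conv4} rather than citing it), and two discrete/continuous energy identities — testing with $\Ab_i\tau$ to recover the strong convergence \eqref{thm:existence_phi_A:eq_conv3}, and with $\delta\Ab_i\tau$ plus Abel summation, the endpoint term $\norm{\curl\Ab_n(T)}^2_{\LLs^2(\Om)}$ and weak lower semicontinuity, to get \eqref{thm:existence_phi_A:eq_conv5}. Both mechanisms are sound (your $O(\tau)$ bound on $\sum_i\norm{\curl(\Ab_i-\Ab_{i-1})}^2_{\LLs^2(\Om)}$ and the splitting of the limit of the sum of two weakly l.s.c.\ terms are standard), and your treatment of the coefficient and moving-domain factors only needs routine H\"older/dominated-convergence bookkeeping, since $\vphib\in\Wb_0\emb\HHs^1(\Om)$ is not bounded so the $(\ovl{\sm}_n-\sm)\pa_t\Ab_n$ product should be paired as $(\pa_t\Ab_n,(\ovl{\sm}_n-\sm)\vphib)$ with $\vphib\in\LLs^6(\Om)$. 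What the paper's route buys is exactly the avoidance of what you call the main obstacle: because $\pa_t\Ab_n$ converges weakly in $\Ls^2((0,T),\Wb_0)$ (not merely in $\Ls^2((0,T),\LLs^2(\Om))$), the curl term in the energy balance never needs to be summed by parts, so no endpoint quantity, no $\Cs([0,T],\LLs^2(\Om))$-compactness and no lower-semicontinuity argument are required; what your route buys is independence from the companion paper, since \eqref{thm:existence_phi_A:eq_conv3} is derived rather than imported (note the paper's proof of \eqref{thm:existence_phi_A:eq_conv5} actually relies on that imported strong convergence).
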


\begin{proof}
	The existence of a solution $(\phi, \Ab)$ to the variational system \eqref{eq:vf_phi}-\eqref{eq:vf_A} has already been shown in \cite[Theorems~5.1, 6.1 and 6.2]{LSV2021b}, where $\phi \in \Lips([0, T], \Zs), \Ab \in \Ls^{\infty}((0, T), \Wb_0)$ with $\sm \pa_t \Ab \in \Ls^2((0,T), \LLs^2(\Om))$ and $\Ab(0) = \tilde{\Ab}_0$ a.e. in $\Theta(0)$. Moreover, the convergences \eqref{thm:existence_phi_A:eq_conv1}-\eqref{thm:existence_phi_A:eq_conv4} have also been proved. Therefore, we omit these proofs.
	
	Next, the uniform boundedness of the sequence $\brac{\pa_t \Ab_n}$ in $\Ls^2((0, T), \Wb_0)$ (cf. \Cref{lem:est_A}) and the reflexivity of that space ensure the existence of a subsequence $\brac{\pa_t \Ab_{n_k}} \sst \brac{\pa_t \Ab_n}$ such that
	\begin{align}
		\label{eq:conv_pdtA}
		& \pa_t \Ab_{n_k} \sra \fb && \text{in} \q \Ls^2((0, T), \Wb_0).
	\end{align}
	By means of \cite[Lemma~1.3.6]{Kacur1985}, we get that $\fb = \pa_t \Ab$ in $\Ls^2((0, T), \Wb_0)$, and hence $\Ab \in \Cs([0, T], \Wb_0)$. Moreover, the equation \eqref{eq:conv_pdtA} is still valid for the whole sequence $\brac{\pa_t\Ab_n}$ due to the uniqueness of a weak solution $\Ab$, see \Cref{thm:uniqueness}.
	
	Finally, we show that the convergence \eqref{thm:existence_phi_A:eq_conv5} also holds true. Because the electrical conductivity $\sm$ vanishes on the air, the limit transition \eqref{thm:existence_phi_A:eq_conv4} immediately implies that
	\begin{align}
		& \pa_t \Ab_n \sra \pa_t \Ab && \text{in} \q \Ls^2((0, T), \LLs^2(\Pi)), \label{eq:conv_1} \\
		& \chi_{\widetilde{\Sm}_n} \pa_t \Ab_n \sra \chi_{\Sm} \pa_t \Ab && \text{in} \q \Ls^2((0, T), \LLs^2(\Om)). \label{eq:conv_2}
	\end{align}
	Hence, we can conclude that
	\begin{align}
		\label{eq:weak_conv}
		& \sqrt{\ovl{\sm}_n} \pa_t \Ab_n \sra \sqrt{\sm} \pa_t \Ab && \text{in} \q \Ls^2((0, T), \LLs^2(\Om)).
	\end{align}
	Now, setting $\vphib = \pa_t \Ab_n(t) \in \Wb_0$ in \eqref{eq:Rothe_A} and then integrating over the time range $(0, \eta) \sst (0, T)$ gives that
	\begin{align*}
		\int\limits_0^{\eta} \norm{\sqrt{\ovl{\sm}_n(t)} \ \pa_t \Ab_n(t)}^2_{\LLs^2(\Om)} \dt 
		& = \int\limits_0^{\eta} \paren{\ovl{\sm}_n(t)\ \pa_t \Ab_n(t), \pa_t \Ab_n(t)}_{\widetilde{\Theta}_n(t)} \dt \\
		& = - \mu^{-1}_0 \int\limits_0^{\eta} \paren{\curl{\ovl{\Ab}_n(t)}, \curl{\pa_t \Ab_n(t)}}_{\Om} \dt - \sm_{\Pi} \int\limits_0^{\eta} \paren{\nabla \ovl{\phi}_n(t), \pa_t \Ab_n(t)}_{\Pi} \dt \\
		& \qqq + \sm_{\Sm} \int\limits_0^{\eta} \paren{\ovl{\vb}_n(t) \times \left(\curl{\ovl{\Ab}_n(t)}\right), \pa_t \Ab_n(t)}_{\widetilde{\Sm}_n(t)} \dt.
	\end{align*}
	By virtue of the limit transitions \eqref{thm:existence_phi_A:eq_conv1}-\eqref{thm:existence_phi_A:eq_conv4} and \eqref{eq:conv_pdtA}-\eqref{eq:conv_2}, we are able to pass to the limit for $n \to \infty$ as follows
	\begin{align*}
		\lim\limits_{n \to \infty} \int\limits_0^{\eta} \norm{\sqrt{\ovl{\sm}_n(t)} \ \pa_t \Ab_n(t)}^2_{\LLs^2(\Om)} \dt 
		& = - \mu^{-1}_0 \int\limits_0^{\eta} \paren{\curl{\Ab(t)}, \curl{\pa_t \Ab(t)}}_{\Om} \dt - \sm_{\Pi} \int\limits_0^{\eta} \paren{\nabla \phi(t), \pa_t \Ab(t)}_{\Pi} \dt \\
		& \qqqq + \sm_{\Sm} \int\limits_0^{\eta} \paren{\vb(t) \times \left(\curl{\Ab(t)}\right), \pa_t \Ab(t)}_{\Sm(t)} \dt \\
		& \stackrel{\eqref{eq:vf_A}}{=} \int\limits_0^{\eta} \paren{\sm(t) \pa_t \Ab(t), \pa_t \Ab(t)}_{\Theta(t)} \dt = \int\limits_0^{\eta} \norm{\sqrt{\sm(t)} \ \pa_t \Ab(t)}^2_{\LLs^2(\Om)} \dt.
	\end{align*}
	This relation together with the weak convergence \eqref{eq:weak_conv} leads us to the strong convergence \eqref{thm:existence_phi_A:eq_conv5}.
\end{proof}

\begin{theorem}[Existence of $u$]
	\label{thm:existence_u}
	Let the assumptions \ref{as:as1}-\ref{as:as5} be fulfilled. Then, there exists a unique function {$u \in \Ys_\alpha \cap \Ls^{\infty}((0, T), \Ls^2(\Om))$} such that the triplet $\paren{\phi, \Ab, u}$ solves the variational problem \eqref{eq:vf_u} and $u(0) = \tilde{u}_0$ a.e. in $\Om$. In addition, the following convergences hold true
	\begin{align*}
		& \ovl{u}_n \sra u, \qq \unl{u}_n \sra u && \text{in} \q \Ls^2((0, T), \Hs^1(\Om)), \\
		& {\ovl{\alpha}_n \pa_t u_n \sra \alpha \pa_t u} && \text{in} \q \Ls^2((0, T), [\Hs^1(\Om)]^\prime).
	\end{align*}
\end{theorem}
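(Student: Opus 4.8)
The plan is to follow the standard Rothe method strategy: derive a priori bounds, extract weakly convergent subsequences, pass to the limit in the discretized variational equation \eqref{eq:Rothe_u}, and then upgrade weak convergence to strong where needed, with uniqueness (\Cref{thm:uniqueness}) promoting the subsequential convergence to convergence of the whole sequence.

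First I would collect the uniform estimates from \Cref{lem:est_u}: the bounds on $\max_l\norm{u_l}_{\Ls^2(\Om)}$, $\sum_i\norm{\nabla u_i}^2_{\LLs^2(\Om)}\tau$, $\sum_i\norm{u_i-u_{i-1}}^2_{\Ls^2(\Om)}$, and $\sum_i\norm{\alpha_i\delta u_i}^2_{[\Hs^1(\Om)]'}\tau$ translate into: $\brac{\ovl u_n},\brac{\unl u_n},\brac{u_n}$ bounded in $\Ls^2((0,T),\Hs^1(\Om))$ and in $\Ls^\infty((0,T),\Ls^2(\Om))$; $\brac{\ovl\alpha_n\pa_t u_n}$ bounded in $\Ls^2((0,T),[\Hs^1(\Om)]')$; and $\norm{\ovl u_n-u_n}_{\Ls^2((0,T),\Ls^2(\Om))}+\norm{\unl u_n-u_n}_{\Ls^2((0,T),\Ls^2(\Om))}\leqc\sqrt\tau\to0$, so the three families share the same limit. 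By reflexivity I extract a subsequence with $\ovl u_n\wconv u$, $\unl u_n\wconv u$, $u_n\wconv u$ in $\Ls^2((0,T),\Hs^1(\Om))$ and $\ovl\alpha_n\pa_t u_n\wconv \chi$ in $\Ls^2((0,T),[\Hs^1(\Om)]')$; then I would identify $\chi=\alpha\pa_t u$ by testing the definition \eqref{def:alpha_pdt_u} against $w\vartheta(t)$ with $w\in\Hs^1(\Om)$, $\vartheta\in\Cs^\infty_0((0,T))$, integrating by parts in $t$, and using \Cref{lem:convergence}(ii) to pass $\ovl\alpha_n\to\alpha$ and $\chi_{\widetilde\Sm_n}\to\chi_{\Sm}$ strongly in $\Ls^2(\Om)$ against the weakly convergent $u_n$ — this yields $u\in\Ys_\alpha$, hence $\sqrt\alpha\,u\in\Cs([0,T],\Ls^2(\Om))$ by \Cref{lem:extension}, and $u(0)=\tilde u_0$ follows from the continuity together with the affine interpolation $u_n(0)=\tilde u_0$.

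Next I would pass to the limit in \eqref{eq:Rothe_u} integrated in time against an arbitrary test function $w\,\vartheta(t)$. The linear term $\paren{\ovl\alpha_n\pa_t u_n,w}_\Om$ converges to $\inprod{\alpha\pa_t u,w}_{1,\Om}$ by the identification above. For the diffusion term $\paren{\ovl\kappa_n\nabla\ovl u_n,\nabla w}_\Om$ and the convection term $\paren{\ovl\alpha_n\ovl\vb_n\cdot\nabla\ovl u_n,w}_\Om$ I combine the weak $\Ls^2((0,T),\Hs^1(\Om))$-convergence of $\ovl u_n$ with the strong convergences $\ovl\kappa_n\to\kappa$, $\ovl\alpha_n\to\alpha$ in $\Ls^2(\Om)$ and $\ovl\vb_n\to\vb$ in $\Cs(\ovl\Om)$ from \Cref{lem:convergence} (the boundedness in $\Ls^\infty$ of these coefficients lets the strong-times-weak products converge). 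The right-hand side requires handling $\paren{\RR_r(\ovl Q_n),w}_{\widetilde\Theta_n}$: here I use that $\RR_r$ is bounded by $r$, Lipschitz, and that $\ovl Q_n\to Q$ in $\Ls^1$ — this is where \eqref{thm:existence_phi_A:eq_conv5}, the strong convergence $\sqrt{\ovl\sm_n}\pa_t\Ab_n\to\sqrt\sm\pa_t\Ab$ in $\Ls^2((0,T),\LLs^2(\Om))$, together with $\nabla\ovl\phi_n\to\nabla\phi$ in $\Ls^2((0,T),\LLs^2(\Pi))$ and $\curl\ovl\Ab_n\to\curl\Ab$ in $\Ls^2((0,T),\LLs^2(\Om))$ from \Cref{thm:existence_phi_A}, does the work; I would show $\ovl Q_n\to Q$ pointwise a.e. (up to a subsequence) and dominated-convergence-style that $\RR_r(\ovl Q_n)\to\RR_r(Q)$ strongly in $\Ls^2((0,T)\times\Om)$, while $\chi_{\widetilde\Theta_n}\to\chi_{\Theta}$ in $\Ls^2(\Om)$ from \Cref{lem:convergence}, so the product against $w$ passes. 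Uniqueness from \Cref{thm:uniqueness} then upgrades the subsequence to the full sequence, and a standard argument (Minty-type, or testing with $w=\ovl u_n$ and using weak lower semicontinuity of the coercive diffusion form together with convergence of the other terms) gives the strong convergence $\ovl u_n\to u$ in $\Ls^2((0,T),\Hs^1(\Om))$.

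The main obstacle I anticipate is the convergence of the nonlinear Joule source $\RR_r(\ovl Q_n)$: unlike the electromagnetic subsystem where $\curl\ovl\Ab_n$ and $\pa_t\Ab_n$ converge only weakly in general, the term $\ovl Q_n$ is genuinely quadratic in these quantities, so weak convergence alone is insufficient and one truly needs the strong convergences \eqref{thm:existence_phi_A:eq_conv3} and \eqref{thm:existence_phi_A:eq_conv5} proved in \Cref{thm:existence_phi_A}, combined carefully with the cut-off's Lipschitz-and-boundedness to control the product against the moving characteristic function $\chi_{\widetilde\Theta_n}$. A secondary technical point is that $\pa_t\Ab_n$ appears in $\ovl Q_n$ but $\ovl\Ab_n$ (not $\Ab_n$) appears in the curl terms, so matching time-interpolants and invoking $\norm{\ovl\Ab_n-\Ab_n}_{\Ls^2((0,T),\Wb_0)}\leqc\sqrt\tau\to0$ (again from \Cref{lem:est_A}) is needed to align everything before passing to the limit.
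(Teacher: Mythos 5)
Your overall strategy is essentially the paper's: the a priori bounds of \Cref{lem:est_u}, weak compactness and identification of the common limit of $\ovl u_n,\unl u_n,u_n$, passage to the limit in \eqref{eq:Rothe_u}, the strong electromagnetic convergences of \Cref{thm:existence_phi_A} to control the quadratic Joule source through the bounded Lipschitz cut-off (your route via a.e.\ convergence of $\ovl\qb_n$ plus dominated convergence is a legitimate variant of the paper's elementary inequality \eqref{eq:useful}), and uniqueness from \Cref{thm:uniqueness} to promote subsequential to whole-sequence convergence. However, the step on which the whole proof hinges is missing. You propose to identify the weak limit of $\ovl\alpha_n\pa_t u_n$ with $\alpha\pa_t u$ by ``integrating by parts in $t$'' against $w\,\vartheta(t)$, but this is not available as stated: because the subdomains move, $\ovl\alpha_n$ jumps at every time node and $\ovl\alpha_n\pa_t u_n\neq\pa_t\paren{\ovl\alpha_n u_n}$. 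The discrete summation by parts produces the extra term $\sum_i\paren{(\alpha_i-\alpha_{i-1})u_{i-1},w}_\Om$, and the essential device of the paper (the identity \eqref{eq:pdt}) is to rewrite exactly this term, via the Reynolds transport theorem applied on $\Sm(t)$ and $\Xi(t)$ and $\vb=\zrb$ on $\Pi$, as the divergence correction $\int_\Om\alpha\,\dive{\paren{\unl u_n\,w\,\vb}}$ that appears in \Cref{def:derivative}. Your outline never invokes the RTT at this point; without it the correction term in \eqref{def:alpha_pdt_u} does not emerge, the absolute continuity of $t\mapsto(\alpha(t)u(t),w)_\Om$ is not established, and the conclusion $u\in\Ys_\alpha$ with $z=\alpha\pa_t u$ does not follow. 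This is the crux of the argument, not a routine detail.

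A second genuine gap is the initial condition. You identify the limit using $\vartheta\in\Cs^\infty_0((0,T))$, which discards all information at $t=0$, and then assert that $u(0)=\tilde u_0$ ``follows from the continuity together with $u_n(0)=\tilde u_0$''. Weak convergence in $\Ls^2((0,T),\Hs^1(\Om))$ gives no control of pointwise-in-time values, so this inference is invalid as stated; one needs either the identity integrated from $0$, in which $\paren{\alpha(0)\tilde u_0,w}_\Om$ enters through the discrete scheme (as in \eqref{eq:pdt} and \eqref{eq:cont_diff}), or the paper's test with $\gamma(t)w$, $\gamma(0)=1$, $\gamma(T)=0$, carried out at both the discrete and the continuous level and compared. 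Finally, two minor remarks: the strong convergence $\ovl u_n\to u$ in $\Ls^2((0,T),\Hs^1(\Om))$ you announce at the end is more than the theorem claims (only $\sra$ is asserted) and is not needed; and the limit of the pointwise term $\paren{\ovl\alpha_n(\eta)\ovl u_n(\eta),w}_\Om$ in the integrated identity is the reason the paper integrates a second time in $\eta$ before passing to the limit, a precaution your sketch should incorporate in some form.
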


\begin{proof}
	First of all, we introduce some auxiliary identities that are useful for further analysis.
	For any time $\eta \in (t_{l-1}, t_l]$, with $l = 1, 2, \ldots, n$, one can easily see that
	\begin{align*}
		& \int\limits_0^{\eta} \paren{\ovl{\alpha}_n(t) \pa_t u_n(t), w}_{\Om} \dt\\
		& = \sum\limits_{i=1}^l \paren{\alpha_i (u_i - u_{i-1}), w}_{\Om}  - \int\limits_{\eta}^{t_l} \paren{\ovl{\alpha}_n(t) \pa_t u_n(t), w}_{\Om} \dt \\
		& = \sum\limits_{i=1}^l \paren{\alpha_i u_i - \alpha_{i-1} u_{i-1}, w}_{\Om} - \sum\limits_{i=1}^l \paren{(\alpha_i - \alpha_{i-1}) u_{i-1}, w}_{\Om} - \int\limits_{\eta}^{t_l} \paren{\ovl{\alpha}_n(t) \pa_t u_n(t), w}_{\Om} \dt.
	\end{align*}
	We split the second term on the RHS over the subdomains as follows
	\begin{align*}
		\sum\limits_{i=1}^l \paren{(\alpha_i - \alpha_{i-1})u_{i-1}, w}_\Om
		& = \sum\limits_{i=1}^l \paren{\alpha_i u_{i-1}, w}_\Om - \sum\limits_{i=1}^l \paren{\alpha_{i-1} u_{i-1}, w}_\Om \\
		& = \sum\limits_{i=1}^l \paren{\alpha_i u_{i-1}, w}_{\Sigma_i \cup \Xi_i \cup \Pi} - \sum\limits_{i=1}^l \paren{\alpha_{i-1} u_{i-1}, w}_{\Sigma_{i-1} \cup \Xi_{i-1} \cup \Pi}.
	\end{align*}
	Then, the RTT allows us to rewrite that
	\[
	\paren{\alpha_i u_{i-1}, w}_{\Sm_i} - \paren{\alpha_{i-1} u_{i-1}, w}_{\Sm_{i-1}} 
	= \alpha_\Sm \int\limits_{t_{i-1}}^{t_i} \dfrac{\di}{\dt} \int\limits_{\Sm(t)} u_{i-1} w \dx \dt \stackrel{\eqref{eq:Reynolds2}}{=} \alpha_\Sm \int\limits_{t_{i-1}}^{t_i} \int\limits_{\Sm(t)} \dive{\paren{u_{i-1}\, w \, \vb(t)}} \dx \dt.    
	\]
	A similar identity can be obtained for the integrals over the air subdomains $\Xi_i$ and $\Xi_{i-1}$, while the corresponding terms disappear on the fixed coil $\Pi$. Therefore, we arrive at
	\begin{align}
		\label{eq:pdt}
		\int\limits_0^{\eta} \paren{\ovl{\alpha}_n(t) \pa_t u_n(t), w}_{\Om} \dt & = \paren{\ovl{\alpha}_n(\eta) \ovl{u}_n(\eta), w}_{\Om} - \paren{\alpha(0) \tilde{u}_0, w}_{\Om} - \int\limits_0^{\eta} \int\limits_\Om \alpha(t) \dive{\paren{\unl{u}_n(t) \, w \, \vb(t)}} \dx \dt \notag \\
		& \qqq - \int\limits_{\eta}^{\ovl{\eta}_n} \int\limits_\Om \alpha(t) \dive{\paren{\unl{u}_n(t) \, w \, \vb(t)}} \dx \dt - \int\limits_{\eta}^{\ovl{\eta}_n} \paren{\ovl{\alpha}_n(t) \pa_t u_n(t), w}_{\Om} \dt.
	\end{align}
	Next, the uniform boundedness of $\brac{\ovl{u}_n}$ from \Cref{lem:est_u} together with the reflexivity of $\Ls^2((0, T), \Hs^1(\Om))$ ensures the existence of a subsequence $\brac{\ovl{u}_{n_k}} \sst \brac{\ovl{u}_n}$ (denoted further by the same index with the original sequence) such that
	\begin{align*}
		& \ovl{u}_n \sra u && \text{in} \q \Ls^2((0, T), \Hs^1(\Om)).
	\end{align*}
	Moreover, by a similar argument, we have the existence of $y \in \Ls^2((0, T), \Hs^1(\Om))$ and $z \in \Ls^2((0, T), [\Hs^1(\Om)]^\prime)$ such that 
	\begin{align*}
		& \unl{u}_n \sra y && \text{in} \q \Ls^2((0, T), \Hs^1(\Om)), \\
		& \ovl{\alpha}_n \pa_t u_n \sra z && \text{in} \q \Ls^2((0, T), [\Hs^1(\Om)]^\prime). 
	\end{align*}
	Because of the a priori estimate \eqref{eq:est_u} for $u_i$, the following relation between $\{\ovl{u}_n\}$ and $\{\unl{u}_n\}$ holds true
	\[
	0 \le \lim\limits_{n \to \infty} \norm{\ovl{u}_n - \unl{u}_n}^2_{\Ls^2((0, T), \Ls^2(\Om))} = \lim\limits_{n \to \infty} \sum\limits_{i=1}^n \norm{u_i - u_{i-1}}^2_{\Ls^2(\Om)} \tau \stackrel{\eqref{eq:est_u}}{\leqc} \lim\limits_{n \to \infty} \tau = 0,
	\]
	which implies that $y = u$ in $\Ls^2((0, T), \Hs^1(\Om))$.
	In addition, we have that $u \in \Ls^{\infty}((0, T), \Ls^2(\Om))$ thanks to the relation
	\[
	\max\limits_{t \in (0, T]} \norm{\ovl{u}_n(t)}_{\Ls^2(\Om)} \stackrel{\eqref{eq:est_u}}{\leqc} 1.
	\]
	In the following, we prove that the function $u$ is the solution to the variational problem \eqref{eq:vf_u}.
	To this end, we integrate the equation \eqref{eq:Rothe_u} over the time interval $(0, \eta) \sst (0, T)$, and then integrate the result over $(0, \xi) \sst (0, T)$. By means of the identity \eqref{eq:pdt}, we get that
	\begin{equation}
		\label{eq:pp}
		\begin{split}
			& \int\limits_0^{\xi} \paren{\ovl{\alpha}_n(\eta) \ovl{u}_n(\eta), w}_{\Om} \di \eta - \xi \paren{\alpha(0) \tilde{u}_0, w}_{\Om} - \int\limits_0^{\xi} \int\limits_0^{\eta} \int\limits_\Om \alpha(t) \dive{\paren{\unl{u}_n(t) \, w \, \vb(t)}} \dx \dt \di \eta \\
			& \q - \int\limits_0^{\xi} \int\limits_{\eta}^{\ovl{\eta}_n} \int\limits_\Om \alpha(t) \dive{\paren{\unl{u}_n(t) \, w \, \vb(t)}} \dx \dt \di \eta - \int\limits_0^{\xi} \int\limits_{\eta}^{\ovl{\eta}_n} \paren{\ovl{\alpha}_n(t) \pa_t u_n(t), w}_{\Om} \dt \di \eta  \\
			& \q + \int\limits_0^{\xi} \int\limits_0^{\eta} \paren{\ovl{\alpha}_n(t) \ovl{\vb}_n(t) \cdot \nabla \ovl{u}_n(t), w}_{\Om} \dt \di \eta + \int\limits_0^{\xi} \int\limits_0^{\eta} \paren{\ovl{\kappa}_n(t) \nabla \ovl{u}_n(t), \nabla w}_{\Omega} \dt \di \eta \\
			& \q = \int\limits_0^{\xi} \int\limits_0^{\eta} \paren{\RR_r\left(\ovl{Q}_n(t)\right), w}_{\widetilde{\Theta}_n(t)} \dt \di \eta.
		\end{split}
	\end{equation}
	Let us invoke \Cref{lem:est_u} to obtain that
	\begin{align*}
		& \lim\limits_{n \to \infty} \abs{ \int\limits_0^{\xi} \int\limits_{\eta}^{\ovl{\eta}_n} \int\limits_\Om \alpha(t) \dive{\paren{\unl{u}_n(t) \, w \, \vb(t)}} \dx \dt \di \eta} \ \leqc \lim\limits_{n \to \infty} \sum\limits_{i=1}^n \norm{u_{i-1}}_{\Hs^1(\Om)} \norm{w}_{\Hs^1(\Om)} \tau^2  \stackrel{\eqref{eq:est_u}}{\leqc} \lim\limits_{n \to \infty} \tau = 0, \\
		& \lim\limits_{n \to \infty} \abs{ \int\limits_0^{\xi} \int\limits_{\eta}^{\ovl{\eta}_n} \paren{\ovl{\alpha}_n(t) \pa_t u_n(t), w}_{\Om} \dt \di \eta \ } \leqc \lim\limits_{n \to \infty} \sum\limits_{i=1}^n \norm{u_i - u_{i-1}}_{\Ls^2(\Om)} \norm{w}_{\Ls^2(\Om)} \tau \stackrel{\eqref{eq:est_u}}{\leqc} \lim\limits_{n \to \infty} \sqrt{\tau} = 0.
	\end{align*}
	Using the convergences in \Cref{lem:convergence} and the Lebesgue dominated convergence theorem, we are able to show that
	\begin{align*}
		& \lim\limits_{n \to \infty} \int\limits_0^{\xi} \paren{\ovl{\alpha}_n(\eta) \ovl{u}_n(\eta), w}_{\Om} \di \eta = \int\limits_0^{\xi} \paren{\alpha(\eta) u(\eta), w}_{\Om} \di \eta, \\
		& \lim\limits_{n \to \infty} \int\limits_0^{\xi} \int\limits_0^{\eta} \int\limits_\Om \alpha(t) \dive{\paren{\unl{u}_n(t) \, w \, \vb(t)}} \dx \dt \di \eta = \int\limits_0^{\xi} \int\limits_0^{\eta} \int\limits_\Om \alpha(t) \dive{\paren{u(t) \, w \, \vb(t)}} \dx \dt \di \eta, \\
		& \lim\limits_{n \to \infty} \int\limits_0^{\xi} \int\limits_0^{\eta} \paren{\ovl{\alpha}_n(t) \ovl{\vb}_n(t) \cdot \nabla \ovl{u}_n(t), w}_{\Om} \dt \di \eta = \int\limits_0^{\xi} \int\limits_0^{\eta} \paren{\alpha(t) \vb(t) \cdot \nabla u(t), w}_{\Om} \dt \di \eta, \\
		& \lim\limits_{n \to \infty} \int\limits_0^{\xi} \int\limits_0^{\eta} \paren{\ovl{\kappa}_n(t) \nabla \ovl{u}_n(t), \nabla w}_{\Omega} \dt \di \eta = \int\limits_0^{\xi} \int\limits_0^{\eta} \paren{\kappa(t) \nabla u(t), \nabla w}_{\Omega} \dt \di \eta.
	\end{align*}
	Now, we address the convergence of the remaining term concerning the discrete Joule heat source $\ovl{Q}_n$. 
	Let us denote 
	\begin{align*}
		\qb & = \sqrt{\sm} \paren{\pa_t \Ab + \chi_{\Pi} \nabla \phi - \vb \times \left(\curl{\Ab}\right)}, \\
		\ovl{\qb}_n & = \sqrt{\ovl{\sm}_n} \paren{\pa_t \Ab_n + \chi_{\Pi} \nabla \ovl{\phi}_n - \ovl{\vb}_n \times \left(\curl{\ovl{\Ab}_n}\right)}.
	\end{align*}
	One can immediately see from \Cref{thm:existence_phi_A} that
	\begin{align*}
		& \ovl{\qb}_n \to \qb && \text{in} \q \Ls^2((0, T), \LLs^2(\Om)).
	\end{align*}
	In addition, we introduce the following inequality, which is valid for non-negative numbers $a, b$ and $c$ 
	\begin{equation}
		\label{eq:useful}
		\abs{\min(a, b) - \min(a, c) \ } \le 2 \sqrt{a} \ \abs{\sqrt{b} - \sqrt{c} \ }.
	\end{equation}
	This inequality can be easily proven by considering all possible cases. Using this inequality and the definition of the cut-off function $\RR_r$, we have that
	\[
	\abs{\RR_r\left(\ovl{Q}_n\right) - \RR_r(Q)} = \abs{\min\left(r, \abs{\ovl{\qb}_n}^2\right) - \min\left(r, \abs{\qb}^2\right) \ } \stackrel{\eqref{eq:useful}}{\le} 2 \sqrt{r} \ \abs{\ {\abs{\ovl{\qb}_n} - \abs{\qb}}\ } \le 2 \sqrt{r} \ \abs{{\ovl{\qb}_n - \qb}}.
	\]
	Therefore, for each $w \in \Hs^1(\Om)$, we can deduce that
	\begin{align*}
	0 &\le \lim\limits_{n \to \infty} \abs{\int\limits_0^{\xi} \int\limits_0^{\eta} \paren{\RR_r\left(\ovl{Q}_n(t)\right) - \RR_r(Q(t)), w}_{\Om} \dt \di \eta \ } \\
	& \leqc \lim\limits_{n \to \infty} \norm{w}_{\Ls^2(\Om)} \left(\int\limits_0^{\xi} \int\limits_0^{\eta} \norm{\ovl{\qb}_n(t) - \qb(t)}^2_{\LLs^2(\Om)} \dt \di \eta \right)^{1/2} = 0.
	\end{align*}
	Collecting all limit transitions above, we are able
	to pass to the limit for $n \to \infty$ in \eqref{eq:pp} to arrive at
	\begin{align}
		\label{eq:cont}
		& \int\limits_0^{\xi} \paren{\alpha(\eta) u(\eta), w}_{\Om} \di \eta - \xi \paren{\alpha(0) \tilde u_0, w}_{\Om} - \int\limits_0^{\xi} \int\limits_0^{\eta} \int\limits_\Om \alpha(t) \dive{\paren{u(t) \, w \, \vb(t)}} \dx \dt \di \eta \notag \\
		& \q + \int\limits_0^{\xi} \int\limits_0^{\eta} \paren{\alpha(t) \vb(t) \cdot \nabla u(t), w}_{\Om} \dt \di \eta + \int\limits_0^{\xi} \int\limits_0^{\eta} \paren{\kappa(t) \nabla u(t), \nabla w}_{\Omega} \dt \di \eta = \int\limits_0^{\xi} \int\limits_0^{\eta} \paren{\RR_r(Q(t)), w}_{\Theta(t)} \dt \di \eta.
	\end{align}
	Differentiating \eqref{eq:cont} w.r.t. $\xi$ gives that
	\begin{equation}
		\label{eq:cont_diff}
		\begin{aligned}
			& \paren{\alpha(\xi) u(\xi), w}_{\Om} -  \paren{\alpha(0) \tilde u_0, w}_{\Om} - \int\limits_0^{\xi} \int\limits_\Om \alpha(t) \dive{\paren{u(t) \, w \, \vb(t)}} \dx \dt \\
			& \qq + \int\limits_0^{\xi} \paren{\alpha(t) \vb(t) \cdot \nabla u(t), w}_{\Om} \dt + \int\limits_0^{\xi} \paren{\kappa(t) \nabla u(t), \nabla w}_{\Omega} \dt = \int\limits_0^{\xi} \paren{\RR_r(Q(t)), w}_{\Theta(t)} \dt,
		\end{aligned}
	\end{equation}
	which implies the absolute continuity of ${\mathcal F}_w(t) := (\alpha(t) u(t), w)_\Om$ on $[0, T]$ for all $w \in \Hs^1(\Om)$. In addition, by differentiating \eqref{eq:cont_diff} w.r.t. $\xi$, it is clear that there exists a function $g \in \Ls^1((0, T), [\Hs^1(\Om)]^\prime)$ satisfying the condition \eqref{eq:defder_g} in \Cref{def:derivative}, i.e., $\alpha \pa_t u \in \Ls^1((0, T), [\Hs^1(\Om)]^\prime)$.
	Hence, we obtain from \eqref{eq:pdt} that 
	\begin{align*}
		\int\limits_0^{\xi} \int\limits_0^{\eta} \inprod{z(t), w}_{1, \Om} \dt \di\eta
		& = \lim_{n \to \infty} \int\limits_0^{\xi} \int\limits_0^{\eta} \paren{\ovl{\alpha}_n(t) \pa_t u_n(t), w}_\Om \dt \di \eta \\
		& = \int\limits_0^{\xi} \paren{\alpha(\eta) u(\eta), w}_{\Om} \di \eta - \xi \paren{\alpha(0) \tilde u_0, w}_{\Om} - \int\limits_0^{\xi} \int\limits_0^{\eta} \int\limits_\Om \alpha(t) \dive{\paren{u(t) \, w \, \vb(t)}} \dx \dt \di \eta \\
		& = \int\limits_0^{\xi} \int\limits_0^{\eta} \dfrac{\di}{\dt} \paren{\alpha(t) u(t), w}_{\Om} \dt \di \eta - \int\limits_0^{\xi} \int\limits_0^{\eta} \int\limits_\Om \alpha(t) \dive{\paren{u(t) \, w \, \vb(t)}} \dx \dt \di \eta \\
		& = \int\limits_0^{\xi} \int\limits_0^{\eta} \inprod{(\alpha \partial_t u)(t), w}_{1, \Om} \dt \di \eta, 
	\end{align*}
	i.e., $z=\alpha \partial_t u \in \Ls^2((0, T), [\Hs^1(\Om)]^\prime).$
	Next, differentiating \eqref{eq:cont_diff} w.r.t. $\xi$ gives us back the variational problem \eqref{eq:vf_u}, which means that $\phi, \Ab$ and $u$ solve the problem \eqref{eq:vf_u}. 
	Finally, we show that the initial condition of $u$ is satisfied. Multiplying the equation \eqref{eq:vf_u} by $\gamma \in \Cs^\infty([0,T])$ satisfying $\gamma(0) = 1$ and $\gamma(T)= 0$, then integrating over the time range $(0, T)$ gives us
	\begin{multline*}
	\int\limits_0^T \gamma(t) \inprod{(\alpha \pa_t u)(t), w}_{1, \Om} \dt + \int\limits_0^T \gamma(t) \paren{\alpha(t) \vb(t) \cdot \nabla u(t), w}_{\Om} \dt \\
	 + \int\limits_0^T \gamma(t) \paren{\kappa(t) \nabla u(t), \nabla w}_{\Om} \dt
	= \int\limits_0^T \gamma(t) \paren{\RR_r(Q(t)), w}_{\Theta(t)} \dt.    
	\end{multline*}
	The first term can be rewritten using the definition \eqref{def:alpha_pdt_u}, leading us to that
	\begin{multline*}
		\paren{\alpha(0) u(0), w}_{\Om} 
		 = - \int\limits_0^T \gamma^\prime(t) \paren{\alpha(t) u(t), w}_{\Om} \dt \\ - \int\limits_0^T \int\limits_\Om \gamma(t)\, \alpha(t) \dive{(u(t) \, w \, \vb(t))} \dx \dt 
		  + \int\limits_0^T \gamma(t) \paren{\alpha(t) \vb(t) \cdot \nabla u(t), w}_{\Om} \dt \\
		   + \int\limits_0^T \gamma(t)  \paren{\kappa(t) \nabla u(t), \nabla w}_{\Om} \dt - \int\limits_0^T \gamma(t) \paren{\RR_r(Q(t)), w}_{\Theta(t)} \dt.
	\end{multline*}
	We repeat the process above when considering \eqref{eq:Rothe_u}, and then pass to the limit $n \to \infty$ to have that 
	\begin{multline*}
		\paren{\alpha(0) \tilde{u}_0, w}_{\Om} 
		 = - \int\limits_0^T \gamma^\prime(t) \paren{\alpha(t) u(t), w}_{\Om} \dt \\
		 - \int\limits_0^T \int\limits_\Om \gamma(t)\, \alpha(t) \dive{(u(t) \, w \, \vb(t))} \dx \dt 
		 + \int\limits_0^T \gamma(t) \paren{\alpha(t) \vb(t) \cdot \nabla u(t), w}_{\Om} \dt\\
		  + \int\limits_0^T \gamma(t) \paren{\kappa(t) \nabla u(t), \nabla w}_{\Om} \dt - \int\limits_0^T \gamma(t) \paren{\RR_r(Q(t)), w}_{\Theta(t)} \dt.
	\end{multline*}
	Therefore, $\paren{\alpha(0) u(0) - \alpha(0) \tilde{u}_0, w}_{\Om} = 0$ for all $w \in \Hs^1(\Om)$, which implies that $u(0) = \tilde{u}_0$ a.e. in $\Om$. By taking into account the uniqueness of the solution $u$ from \Cref{thm:uniqueness}, we note that the convergences in \Cref{thm:existence_u} are not only valid for a subsequence, but also for the original sequence. We have accomplished the proof.
\end{proof}
%--------------------------------------------------------------------------
\section{Numerical results}
\label{sec:numerial_results}

We perform some numerical tests in this section to support our theoretical results. Time and space discretization schemes for electromagnetic problems involving a moving non-magnetic conductor have been thoroughly studied in our previous works (cf. \cite{LSV2021b,LSV2022c}). Therefore, we are now focusing on the performance of the discretization scheme for the heat problem with a moving domain. In \Cref{subsec:experiments}, two numerical experiments describing the heat transfer process in two-dimensional (2D) rotating disks are investigated. Afterwards, in \Cref{subsec:simulation}, we repeat the simulation of an induction heating system performed in \cite{CGS2017}, with a moving workpiece.

The variational problems are numerically solved using the FEM, and the discretization scheme is implemented with the aid of the finite-element software package FreeFEM \cite{Hecht2012}. The implementation of the variational problems \eqref{eq:TD_phi}-\eqref{eq:TD_A} follows the saddle-point formulations proposed in \cite{LSV2021b} and \cite{LSV2022c}. The first-order Lagrangian finite elements are used to spatially approximate the solution $u_i$ of the equation \eqref{eq:TD_u}. Since the velocity $\vb$ is known, it is not necessary to change the computational mesh, which requires a re-meshing procedure that would significantly increase the computational cost. Instead, the mesh is fixed during the whole time range, and a characteristic function tracks the moving workpiece  
\[
\chi_{\Sm}(\xb) = 
\begin{cases}   
	1 \qq  \text{if } \xb \in \Sm, \\
	0 \qq \text{otherwise}.
\end{cases}
\]

In order to estimate the order of convergence without knowing the exact solution, we define the following relative error between Rothe's solution $\ovl{u}_n$ obtained by the proposed numerical method and a reference solution $u_{ref}$
\[
\tilde{E}_u = \dfrac{\norm{\ovl{u}_n - u_{ref}}^2_{\Ls^2((0, T), \Hs^1(\Om))}}{\norm{u_{ref}}^2_{\Ls^2((0, T), \Hs^1(\Om))}}.
\]

In all test cases, we assume that the initial temperature is $\tilde{u}_0 = 298\mathrm{K} \ (\approx 25^\circ \mathrm{C})$ and the constant magnetic permeability of vacuum is $\mu_0 = 4\pi$E-7$\mathrm{H/m}$. The values of other material coefficients used in numerical tests are presented in \Cref{tab:coefficients}, cf. \cite{Serway1998, Matula1979, TPHK1970}.
\begin{table}[http]
	\centering
	\caption{\em Material coefficients used in the numerical tests.} \label{tab:coefficients}
	\begin{tabular}{ p{0.25\textwidth} p{0.05\textwidth} c c c c } \hline
		& & Unit & Air & Copper & Aluminium \\ \hline 
		Electrical conductivity     & $\sm$     & $\mathrm{MS/m}$         & -         & 59.6    & 35     \\ 
		Volumetric heat capacity    & $\alpha$   & $\mathrm{kJ/(m^3{\cdot}K)}$   & 1.192  & 3384   & 2422    \\
		Thermal conductivity        & $\kappa$  & $\mathrm{W/(m{\cdot}K)}$     & 0.02514   & 401       & 237       \\ \hline
	\end{tabular}
\end{table}

\subsection{Numerical experiments}
\label{subsec:experiments}

We perform two numerical experiments concerning the heat transfer process in 2D rotating disks with radius $r_1 = {0.2}\mathrm{m}$. The disks both consist of an aluminium circular area, with radius $r_2 = {0.1}\mathrm{m}$ and $r_2 = {0.05}\mathrm{m}$, respectively, and the complementary area filled by copper, see \Cref{fig:exp_domain}. The domains are rotating with velocity $\vb = {0.125}\pi \ (-y, x)^\transpose \mathrm{m/s}$ and are partitioned into {245598} and {251536} triangles, respectively. Instead of the Joule heating $Q$, the system is supplied with a heat source $f = {1} \mathrm{MW/m^3}$. The changes over time of the temperature distribution are visualized by the software package MEDIT \cite{Frey2001}, which are shown in~\Cref{fig:exp1,fig:exp2}.

\begin{figure}
	\centering
	\begin{subfigure}{.5\textwidth}
		\centering
		\begin{tikzpicture}
			\draw[fill=blue!50!, line width=0.25mm] (0,0) circle (2.5cm);
			\draw[fill=red!60] (0,0) circle (1.25cm);
			\draw[-latex, line width=0.25mm] (2.7,0) arc (0:30:2.7);
			\filldraw[black] (0,0) circle (2pt);
			\node at (2.8, 0.8) {$\vb$};
			% \node at (0.6,0) {$\Om_1$};
			% \node at (-1.2,-1.5) {$\Om_2$};
		\end{tikzpicture}
	\end{subfigure}%
	\begin{subfigure}{.5\textwidth}
		\centering
		\begin{tikzpicture}
			\draw[fill=blue!50!, line width=0.25mm] (0,0) circle (2.5cm);
			\draw[fill=red!60] (1.25,0) circle (0.625cm);
			\draw[-latex, line width=0.25mm] (2.7,0) arc (0:30:2.7);
			\filldraw[black] (0,0) circle (2pt);
			\node at (2.8, 0.8) {$\vb$};
			% \node at (1.3,0) {$\Om_1$};
			% \node at (0,-1.5) {$\Om_2$};
		\end{tikzpicture}
	\end{subfigure}
	\caption{The circular domain of experiments consisting of an aluminium circular area (red) and the complementary area filled by copper (blue). The domains are rotating with velocity $\vb$. \textit{Left}: the first experiment with a concentric interior circle. \textit{Right}: the second experiment with an eccentric interior circle.}
	\label{fig:exp_domain}
\end{figure}
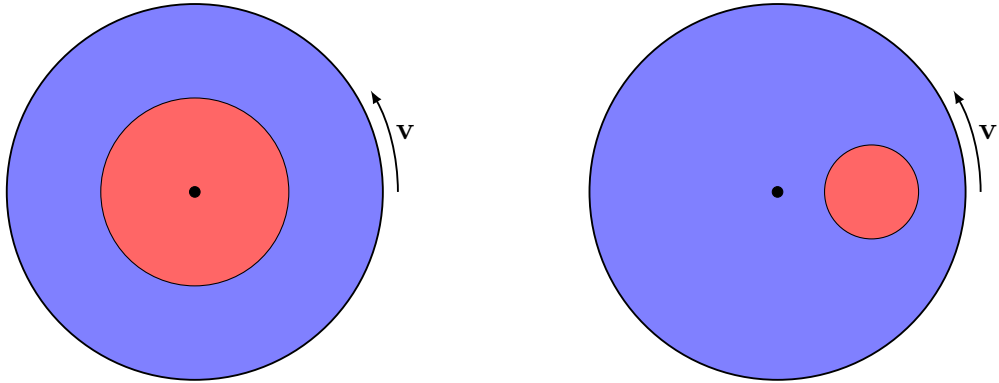

\begin{figure}
	\centering
	\begin{subfigure}{.33\textwidth}
		\centering
		\includegraphics[width=1\linewidth]{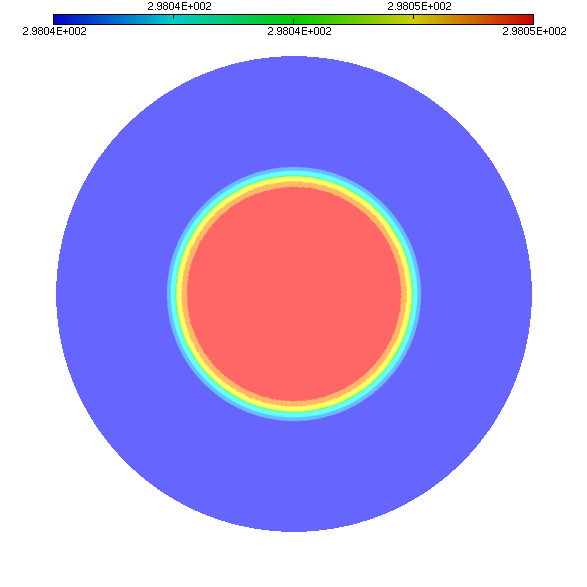}
	\end{subfigure}%
	\begin{subfigure}{.33\textwidth}
		\centering
		\includegraphics[width=1\linewidth]{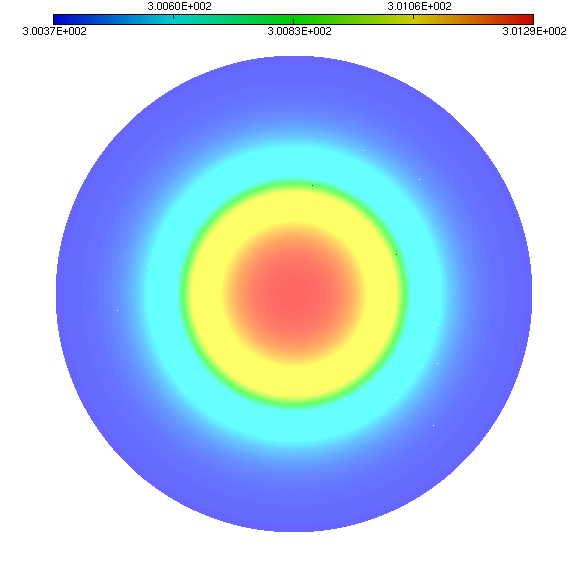}
	\end{subfigure}%
	\begin{subfigure}{.33\textwidth}
		\centering
		\includegraphics[width=1\linewidth]{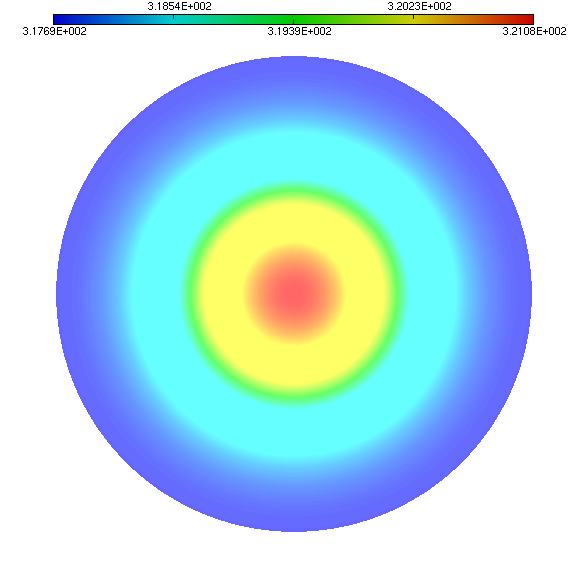}
	\end{subfigure}
	\caption{Temperature distribution of the first experiment at different time points. \textit{Left:} $t = 0.125 \mathrm{s}$. \textit{Middle:} $t = 8 \mathrm{s}$. \textit{Right:} $t = 64 \mathrm{s}$.}
	\label{fig:exp1}
\end{figure}

\begin{figure}
	\centering
	\begin{subfigure}{.33\textwidth}
		\centering
		\includegraphics[width=1\linewidth]{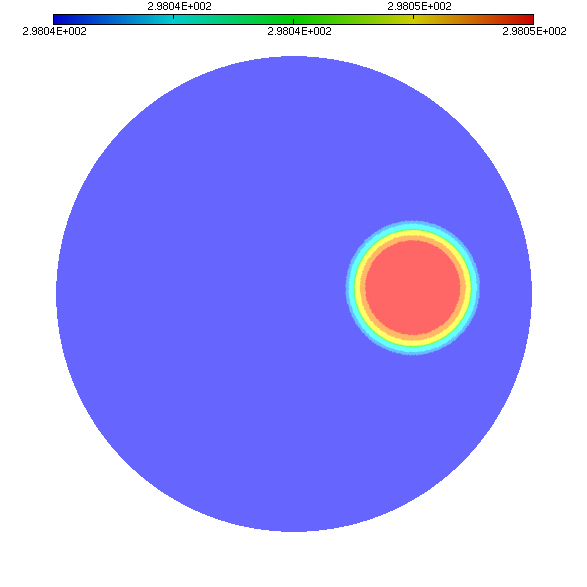}
	\end{subfigure}%
	\begin{subfigure}{.33\textwidth}
		\centering
		\includegraphics[width=1\linewidth]{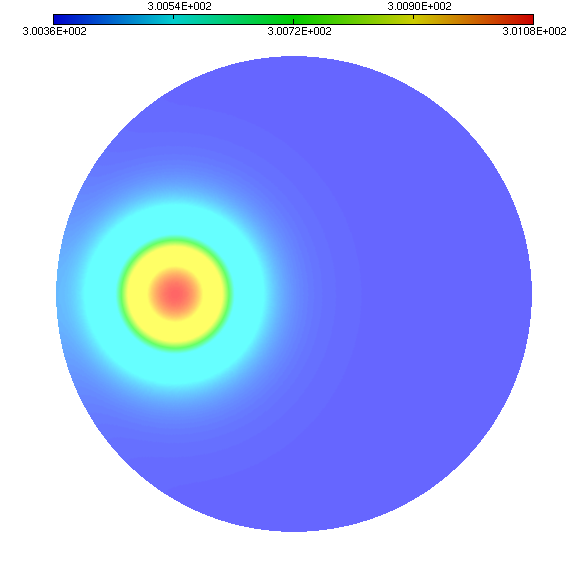}
	\end{subfigure}%
	\begin{subfigure}{.33\textwidth}
		\centering
		\includegraphics[width=1\linewidth]{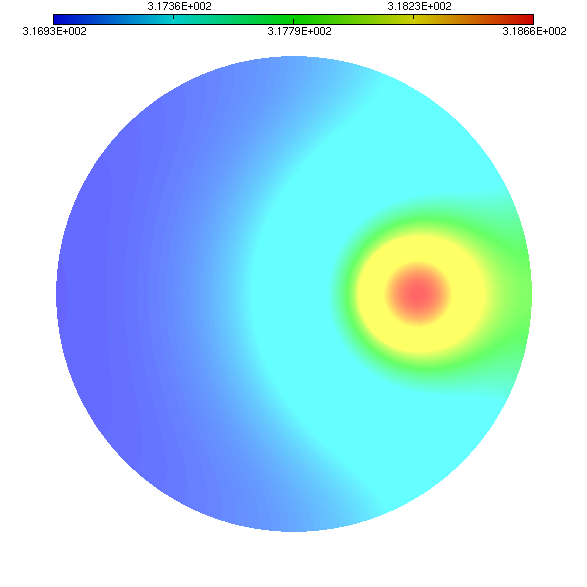}
	\end{subfigure}
	\caption{Temperature distribution of the second experiment at different time points. \textit{Left:} $t = 0.125 \mathrm{s}$. \textit{Middle:} $t = 8 \mathrm{s}$. \textit{Right:} $t = 64 \mathrm{s}$.}
	\label{fig:exp2}
\end{figure}

We verify the convergence of the temporal discretization scheme \eqref{eq:TD_u} in the time interval $(0, T)$ with $T = {64} \mathrm{s}$. The reference temperature $u_{ref}$ is the solution to \eqref{eq:TD_u} with time step $\tau = {2^{-8}}\mathrm{s}$, while larger time steps $\tau = 2^{-j}\mathrm{s}$, with $j = {2, 3, \ldots, 7}$, are used to compute the discrete solution $\ovl{u}_n$. Relative errors {$\tilde{E}_u$} w.r.t. time step $\tau$ are presented in~\Cref{fig:exp_convergence}. This figure shows the potential convergence rate $\OO(\tau)$ of the numerical scheme \eqref{eq:TD_u}.

\begin{figure}
	\centering
	\begin{subfigure}{.5\textwidth}
		\centering
		\includegraphics[width=1.05\textwidth]{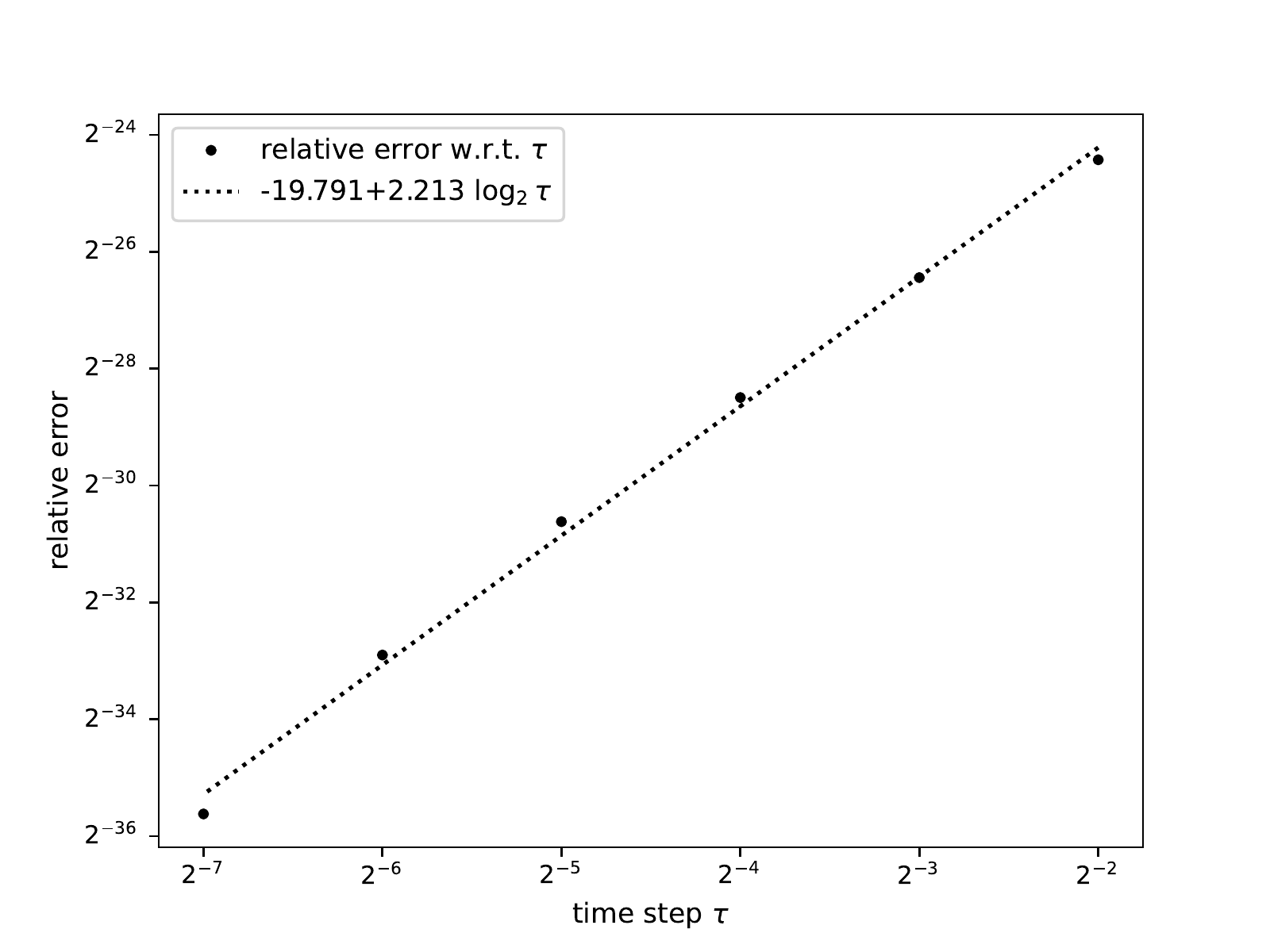}
	\end{subfigure}%
	\begin{subfigure}{.5\textwidth}
		\centering
		\includegraphics[width=1.05\textwidth]{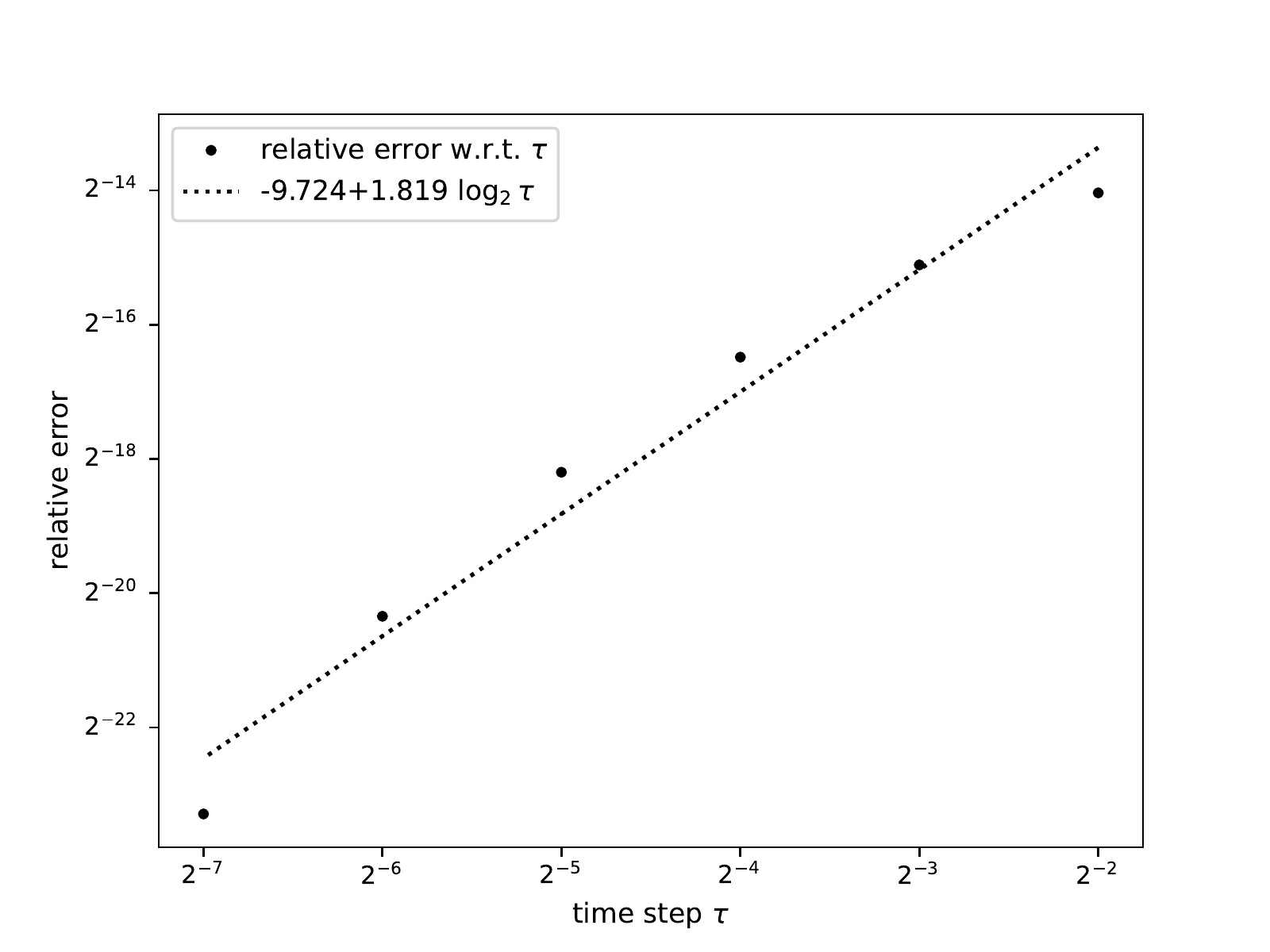}
	\end{subfigure}
	\caption{Relative error {$\tilde{E}_u$} w.r.t. the time step. \textit{Left:} the first experiment. \textit{Right:} the second experiment. Both numerical experiments show the potentially optimal convergence rate of the temporal discretization.}
	\label{fig:exp_convergence}
\end{figure}

\subsection{Numerical simulation}
\label{subsec:simulation}

We repeat the numerical simulation of an induction heating process performed in \cite{CGS2017} and consider a moving workpiece instead of a fixed one. The domain $\Om$ is a unit cube consisting of a thin-walled cylindrical aluminium workpiece with two radii $r_1 = 0.092 \mathrm{m}, r_2 = 0.081 \mathrm{m}$ and height $h = 0.3 \mathrm{m}$, a copper coil and surrounding air. The initial datum $\tilde{\Ab}(0) = \zrb$ in $\Theta(0)$ has a trivial extension $\tilde{\Ab}(0) = \zrb$ in the whole domain $\Om$ without requiring $\Cs^{2, 1}$ regularity of the boundary of $\Sm(0)$ and $\Pi$. Hence, our theoretical results are still valid for this geometry. The domain $\Om$ is partitioned into $32312$ tetrahedra. A static external current density with magnitude $\jmath = 1.0$E7$\mathrm{A/m^2}$ is driven through the coil $\Pi$ via the interfaces $\Gm_{\text{in}}$ and $\Gm_{\text{out}}$. The workpiece is moving along the $z$-axis with velocity $\vb = (0, 0, 0.46875)^\transpose \mathrm{cm/s}$, and the considered time length is $T = 32 \mathrm{s}$. Outside the workpiece, we assume that the velocity is very small; thus, the thermal convection is dominated by the thermal conduction. Therefore, we can neglect the thermal convection effect in the air domain and avoid the computation of airflow, which is not essential in this paper.

The reference solution is computed from the variational system \eqref{eq:TD_phi}-\eqref{eq:TD_u} with time step $\tau = 2^{-7}\mathrm{s}$ ($n = 4096$ subintervals). Different locations of the workpiece together with the corresponding temperature distributions in the conductors at two different time points are presented in \Cref{fig:simulation}. Some rougher discrete solutions are also computed when the number of time intervals equals $n = $ 64, 128, 256, 512, 1024 and 2048. Relative error $\tilde{E}_u$ on the temperature $u$ w.r.t. time step is shown in \Cref{fig:error}, which confirms the potentially optimal convergence rate of our proposed scheme.

\begin{figure}
	\centering
	\begin{subfigure}{.33\textwidth}
		\centering
		\includegraphics[width=0.95\textwidth]{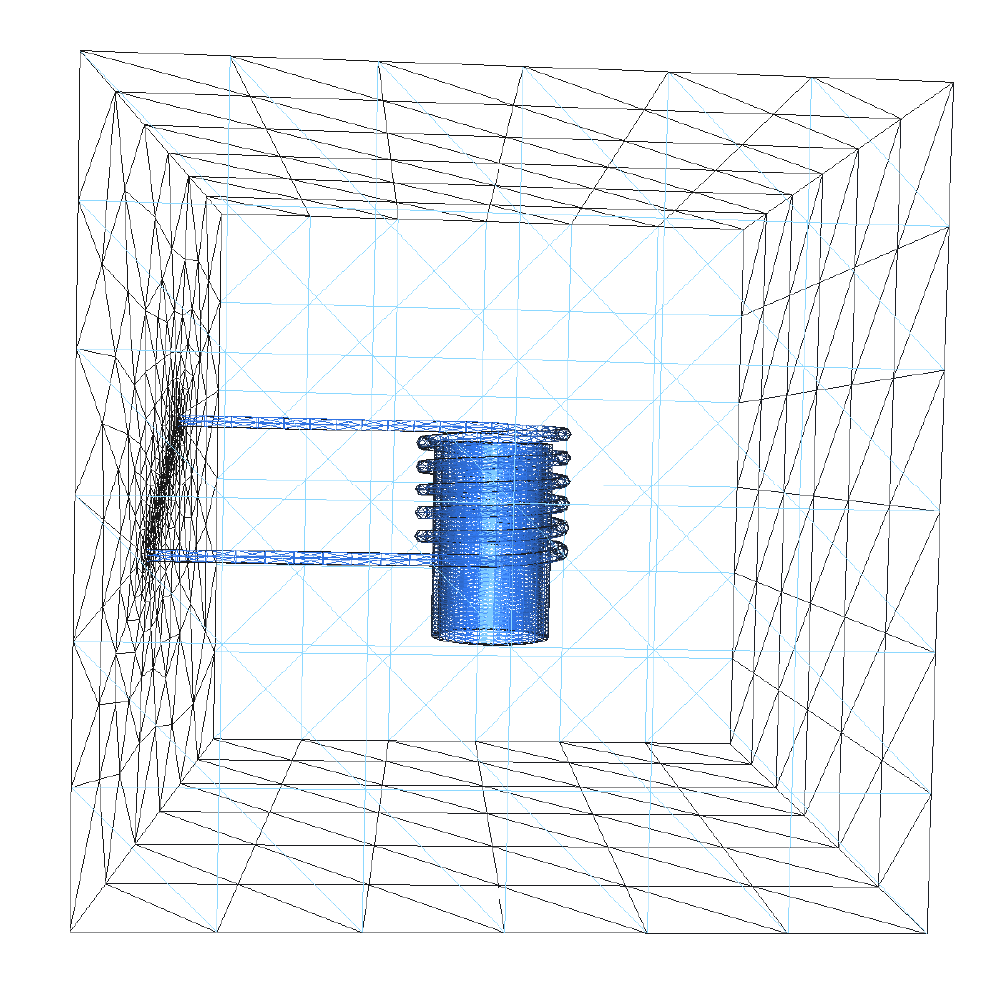}
	\end{subfigure}%
	\begin{subfigure}{.33\textwidth}
		\centering
		\includegraphics[width=0.9\textwidth]{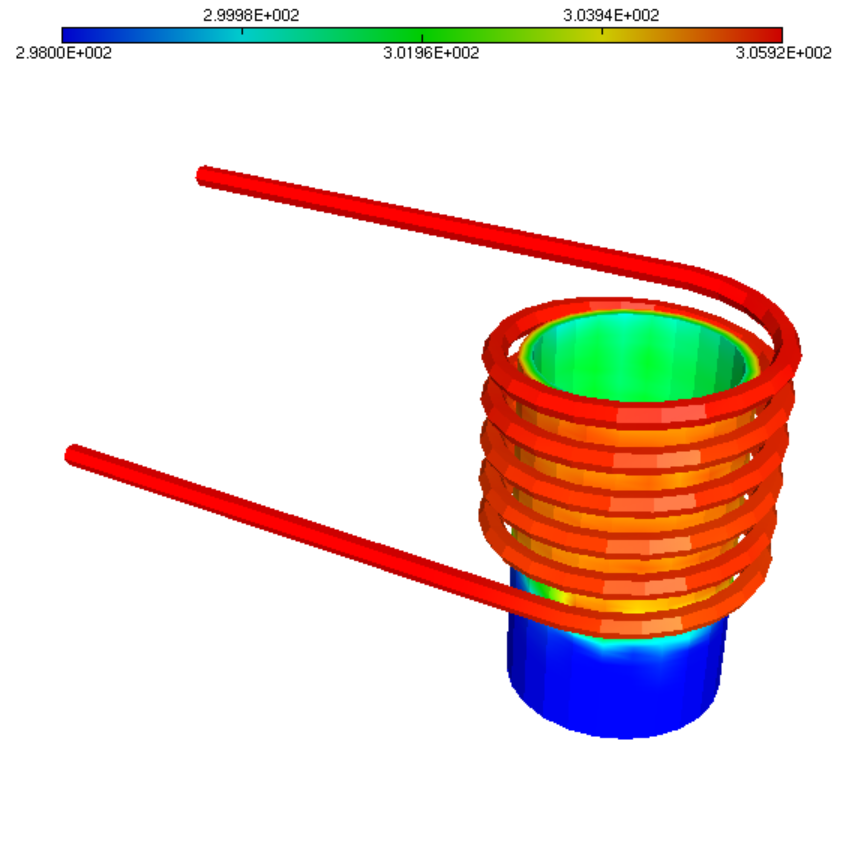}
	\end{subfigure}
	\begin{subfigure}{.33\textwidth}
		\centering
		\includegraphics[width=0.9\textwidth]{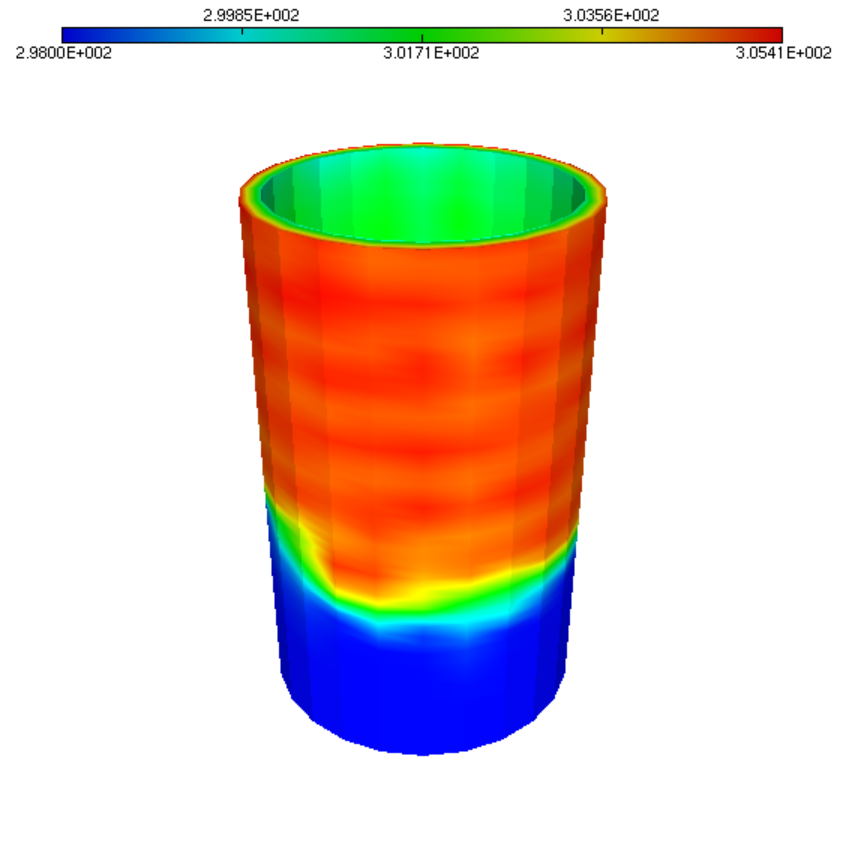}
	\end{subfigure}
	\\
	\begin{subfigure}{.33\textwidth}
		\centering
		\includegraphics[width=0.95\textwidth]{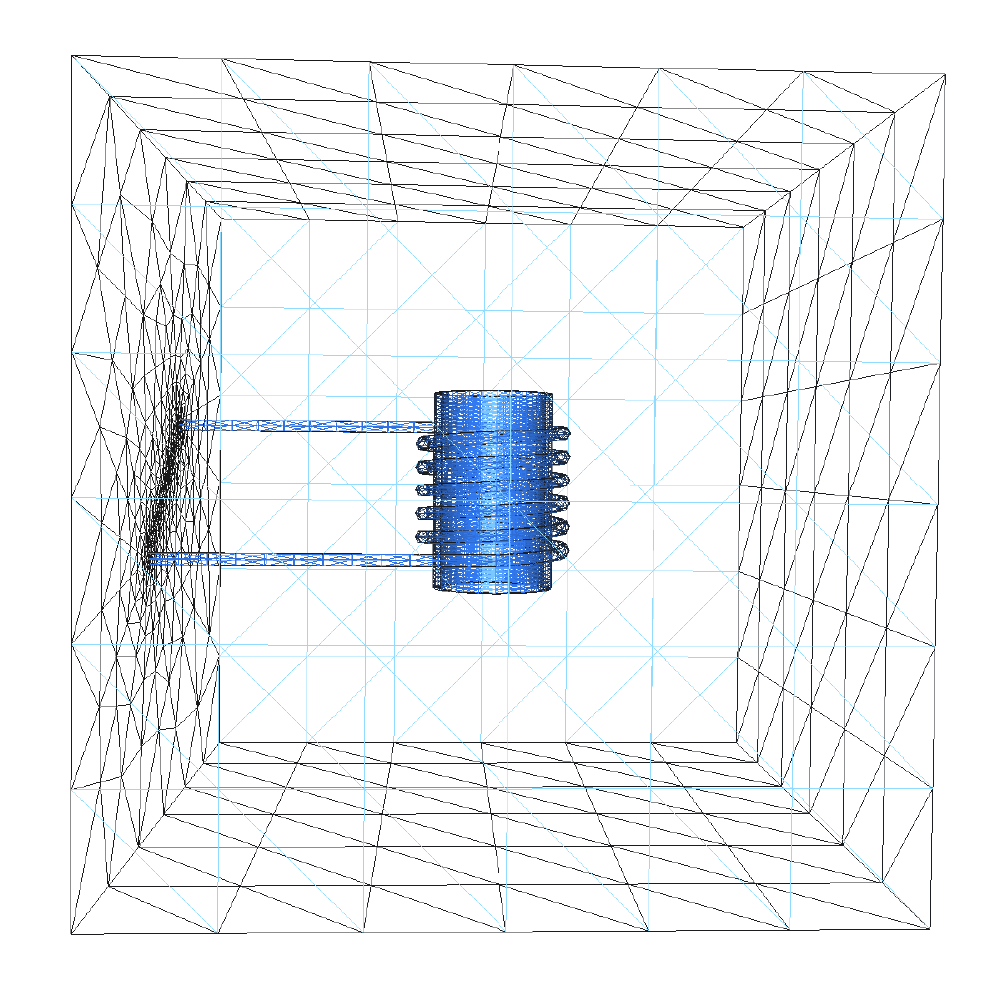}
	\end{subfigure}%
	\begin{subfigure}{.33\textwidth}
		\centering
		\includegraphics[width=0.9\textwidth]{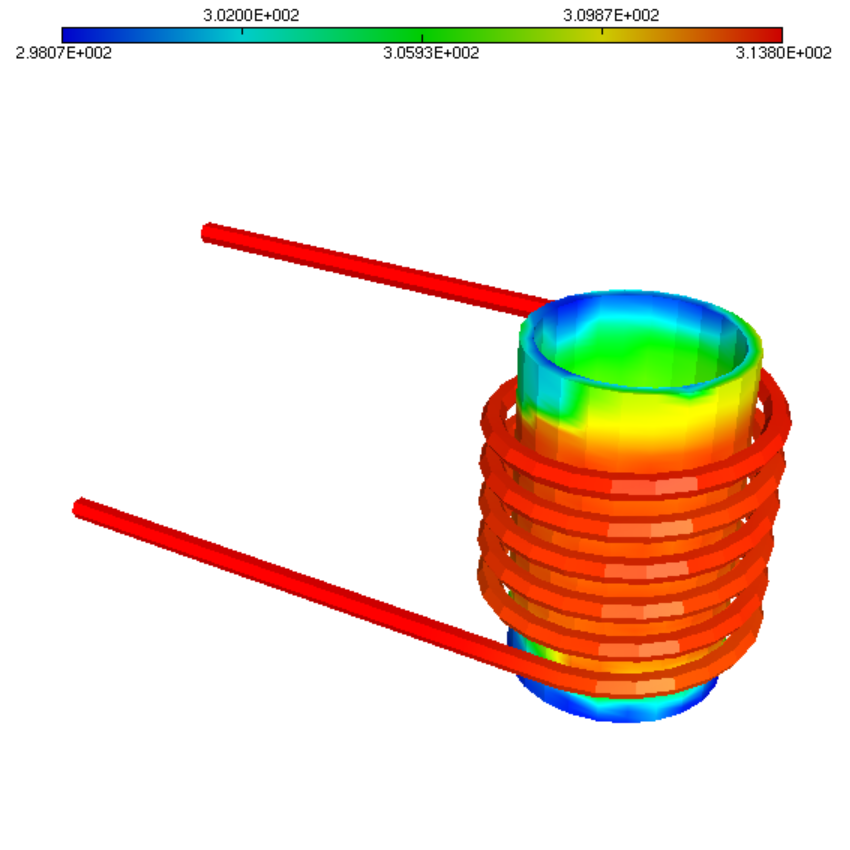}
	\end{subfigure}%
	\begin{subfigure}{.33\textwidth}
		\centering
		\includegraphics[width=0.9\textwidth]{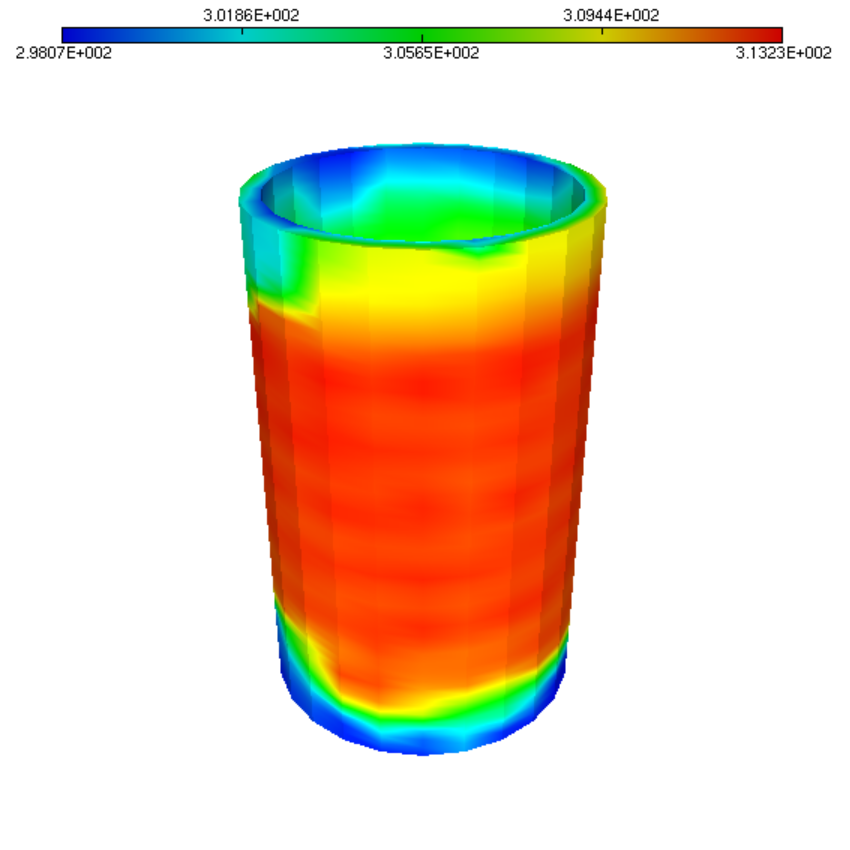}
	\end{subfigure}
	\caption{Different locations of the workpiece and the corresponding temperature distributions in the conductors at different time points. \textit{Top:} $t = 16 \mathrm{s}$. \textit{Bottom:} $t = 32 \mathrm{s}$.}
	\label{fig:simulation}
\end{figure}

\begin{figure}
	\centering
	\includegraphics[width=0.55\textwidth]{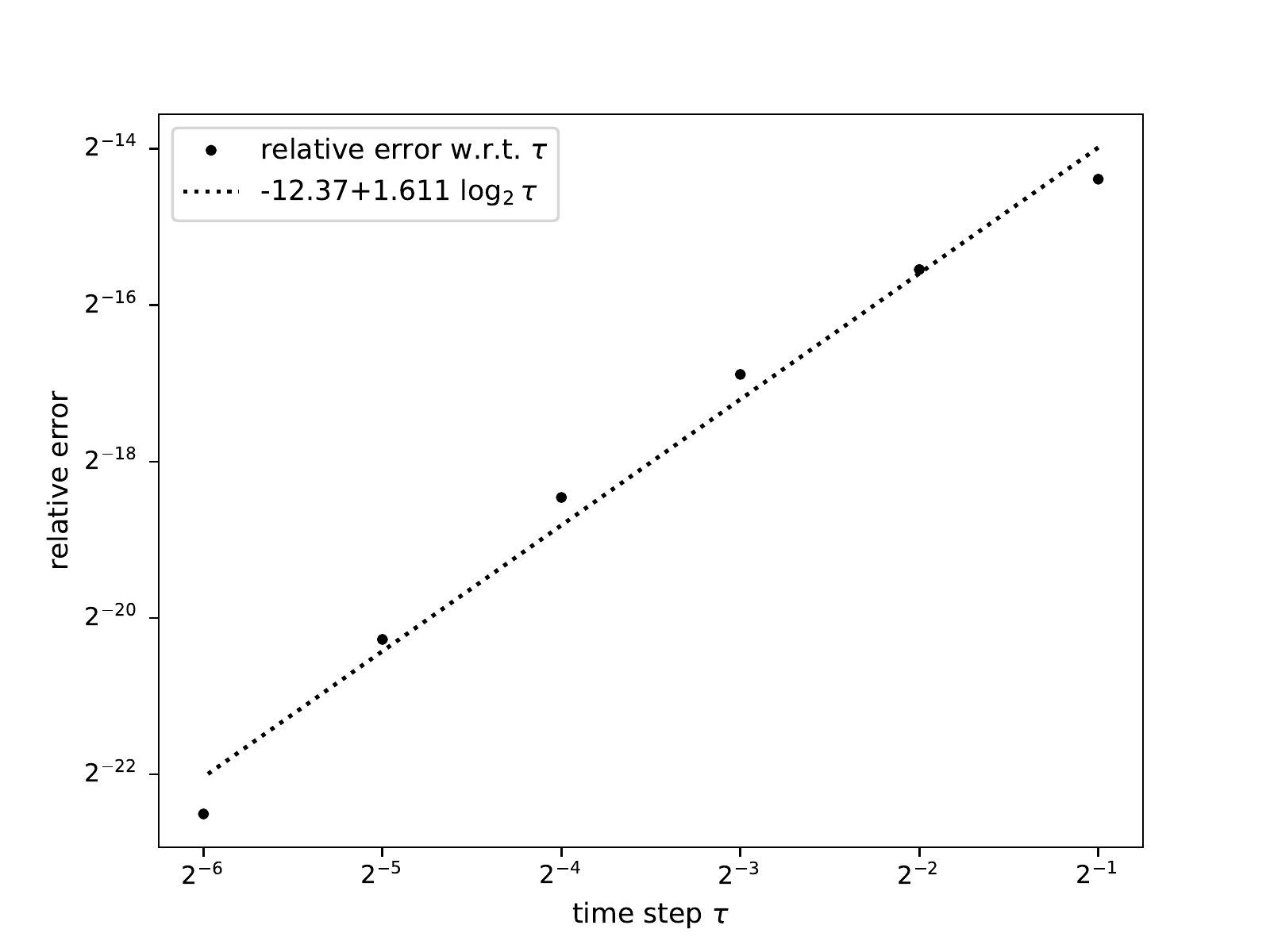}
	\caption{Relative error {$\tilde{E}_u$} w.r.t. time step. This result confirms the potentially optimal convergence rate of the numerical scheme.}
	\label{fig:error}
\end{figure}

%--------------------------------------------------------------------------
\section{Conclusion}
\label{sec:conclusion}

In the present paper, we have investigated an induction heating problem in a three-dimensional domain containing a moving non-magnetic conductor. The electromagnetic process and heat transfer are modelled by PDEs, which are coupled by the Joule heating effect. A cut-off function has been introduced to restrain the nonlinear Joule heat source. A time-discrete scheme based on the backward Euler's method has been proposed to approximately solve the variational problems. The convergence of the proposed discretization scheme and the well-posedness of the variational system have been proved with the aid of the Reynolds transport theorem and Rothe's method. Some numerical results have been presented to support the theoretical results.

In the future, comprehensive error estimates of the proposed temporal discretization scheme should be performed to verify the potentially optimal convergence rate obtained numerically in \Cref{sec:numerial_results}. Future studies could also concern full-wave induction heating problems involving magnetic moving conductors, which have a wide range of applications in manufacturing industries. In addition, functional analysis performed in \Cref{sec:functional_setting} could be extended for more general settings, such as non-constant material

		\bibliography{ref}
		\bibliographystyle{abbrv}
\end{document}